\documentclass{article}

\usepackage{microtype}
\usepackage{graphicx}
\usepackage{subcaption}
\usepackage{booktabs} 
\usepackage{enumitem}
\usepackage{dblfloatfix}

\usepackage{hyperref}

\usepackage[accepted]{icml2026}

\usepackage{amsmath, amssymb}
\usepackage{mathtools}
\usepackage{amsthm}

\usepackage{float}
\usepackage{multicol}
\usepackage{ragged2e}
\usepackage{array}
\usepackage{tabularx}
\newcolumntype{Y}{>{\raggedright\arraybackslash}X}

\usepackage{grffile}

\newcommand{\Real}{\mathbb{R}}

\newcommand{\E}{\mathbb{E}}
\newcommand{\sampledhjprox}{\operatorname{\widehat{prox}}^{\delta}}

\newcommand{\prox}{\operatorname{prox}}
\newcommand{\hjprox}{\operatorname{prox}^{\delta}}

\newcommand{\amp}{\mathop{\:\:\,}\nolimits}

\theoremstyle{plain}
\newtheorem{theorem}{Theorem}[section]

\newtheorem{lemma}[theorem]{Lemma}

\theoremstyle{definition}

\newtheorem{assumption}[theorem]{Assumption}
\theoremstyle{remark}

\icmltitlerunning{HJ-Prox-based Operator Splitting}

\begin{document}

\twocolumn[
\icmltitle{Operator Splitting with Hamilton-Jacobi-based Proximals}




  \icmlsetsymbol{equal}{*}

  \begin{icmlauthorlist}
    \icmlauthor{Nicholas Di}{yyy}
    \icmlauthor{Eric C. Chi}{xxx}
    \icmlauthor{Samy Wu Fung}{zzz}
  \end{icmlauthorlist}

  \icmlaffiliation{yyy}{Department of Statistics, Rice University, Houston TX, USA}
  \icmlaffiliation{xxx}{Department of Statistics, University of Minnesota, Minneapolis MN, USA}
  \icmlaffiliation{zzz}{Department of Applied Mathematics and Statistics, Colorado School of Mines, Golden CO, USA}

  \icmlcorrespondingauthor{Nicholas Di}{nd56@rice.edu}

  \icmlkeywords{Machine Learning, ICML}

  \vskip 0.3in
]



\printAffiliationsAndNotice{}  

\begin{abstract}%
Operator splitting algorithms are a cornerstone of modern first-order optimization, decomposing complex problems into simpler subproblems solved via proximal operators. However, most functions lack closed-form proximal operators, which has long restricted these methods to a narrow set of problems. Hamilton-Jacobi-based proximal operator (HJ-Prox) is a recent derivative-free Monte Carlo technique based on Hamilton-Jacobi PDE theory, that approximates proximal operators numerically. In this work, we introduce a unified framework for operator splitting via HJ-Prox, which allows for deployment of operator splitting even when functions are not proximable. We prove that replacing exact proximal steps with HJ-Prox in algorithms such as proximal point, proximal gradient descent, Douglas–Rachford splitting, Davis–Yin splitting, and primal–dual hybrid gradient preserves convergence guarantees under mild assumptions. Numerical experiments demonstrate HJ-Prox is competitive and effective on a wide variety of statistical learning tasks.
\end{abstract}

\section{Introduction}
Splitting algorithms are central to modern statistical machine learning and optimization, particularly for problems with nonsmooth composite objectives \cite{ParikhBoyd2014}. These methods work by decomposing difficult problems into sequences of simpler subproblems, each involving a proximal operator. The main computational bottleneck arises when proximal operators lack closed-form solutions, forcing practitioners to solve expensive inner optimization problems at each iteration \cite{Tibshirani2017, TibshiraniTaylor2011}.
Recent work~\cite{OsherHeatonFung2023} introduced HJ-Prox, a Monte Carlo scheme that approximates proximal operators using Hamilton–Jacobi partial differential equations. This approach sidesteps the need for closed-form expressions, but its theoretical properties within splitting frameworks have remained unexplored. We address this gap by establishing convergence guarantees for HJ-Prox when embedded in standard splitting architectures.

\subsection{Contributions}
Our contributions can be summarized as follows. 
\begin{itemize}[nosep,leftmargin=*]
    \item We develop a unified convergence theory for zeroth-order splitting algorithms built on HJ-Prox approximations. Our framework applies to four major splitting schemes: proximal gradient descent (PGD), Douglas-Rachford splitting (DRS), Davis-Yin splitting (DYS), and primal-dual hybrid gradient (PDHG)~\cite{ryu2022large}.
    \item For each HJ-Prox-based splitting algorithm, we prove convergence almost surely under mild regularity conditions.
    \item Numerical experiments on sparse regression, trend filtering, and image denoising problems demonstrate that HJ-based splitting methods match the performance of analytically-derived solutions while extending to settings where \emph{closed-form proximal operators are unavailable}.
\end{itemize} 

\section{Background}
Splitting algorithms are designed to solve composite convex optimization problems of the form
\begin{equation}
    \min_x f(x) + g(x),
    \label{eq:min_prob}
\end{equation}
where $f$ and $g$ are proper, lower‑semicontinuous (LSC) and convex. Their efficiency, however, depends critically on the availability of closed-form proximal operators for $f$ or $g$. When these operators are unavailable, the proximal step must be approximated through iterative subroutines, creating a substantial computational burden or the problem must be reformulated.
To address this challenge, several lines of research have emerged. One approach focuses on improving efficiency through randomization within the algorithmic structure. These methods reduce computational cost by sampling blocks of variables, probabilistically skipping the proximal step, or solving suboptimization problems incompletely with controlled error \cite{Mishchenko2022ProxSkip, BonettiniPratoRebegoldi2020OO, BricenoAriasEtAl2019, CondatRichtarik2022RandProxOO}. Alternatively, other approaches reformulate the problem by focusing on dual formulations \cite{Tibshirani2017, MazumderHastie2012}.

While these techniques improve efficiency, they all share common limitations. They require fluency in proximal calculus to derive the proximal operator. Moreover, proximal operators remain problem-dependent as they typically require tailored solution strategies for each specific function class. This creates a critical research gap: the need for a simple generalizable method that can approximate the proximal operator for a more general class of functions.

\subsection{Hamilton-Jacobi-based Proximal (HJ-Prox)}

A promising solution to this challenge has emerged from recent work that approximates the proximal operator using a Monte Carlo approach inspired by Hamilton-Jacobi (HJ) PDEs. Notably, \citep{OsherHeatonFung2023} showed that for $\delta >0$,
\begin{align}
    \prox_{tf}(x)   
    &=  \lim_{\delta \to 0^+}\frac{\mathbb{E}_{y \sim \mathnormal{\mathcal{N}(x,\delta t I)}}\left[y \cdot\exp(-f(y)/\delta)\right]}{\mathbb{E}_{y \sim \mathnormal{\mathcal{N}(x,\delta t I)}}\left[\exp(-f(y)/\delta)\right]}
    \\
    &\approx  \frac{\mathbb{E}_{y \sim \mathnormal{\mathcal{N}(x,\delta t I)}}\left[y \cdot\exp(-f(y)/\delta)\right]}{\mathbb{E}_{y \sim \mathnormal{\mathcal{N}(x,\delta t I)}}\left[\exp(-f(y)/\delta)\right]}
    \\
    &= \hjprox_{tf}(x) \label{eq:hj_prox}
\end{align}
where $\mathcal{N}(x, \delta t I)$ represents the normal distribution with mean $x$ and covariance matrix $\delta t I$, $t > 0$, and $f$ is assumed to be weakly-convex~\cite{ryu2022large}. The HJ-Prox, denoted by $\hjprox_{tf}$ in~\eqref{eq:hj_prox}, fixes a small value of $\delta > 0$ to approximate the limiting expression above, which enables a Monte Carlo approximation of the proximal operator in a zeroth-order manner~\cite{OsherHeatonFung2023, tibshirani2025laplace, meng2025recent, zhang2025thinking}. 

This approach is particularly attractive because it \emph{only requires function evaluations} and avoids the need for derivatives or closed-form solutions.
Subsequent research has investigated HJ-Prox applications, primarily in global optimization via adaptive proximal point algorithms \cite{heaton2024global, ZhangEtAl2024}. 
These applications, however, remain limited to the proximal point algorithm, which is gradient descent on the Moreau envelope of $f$, leaving the broader family of splitting algorithms unexplored.
Our work expands upon the theory of HJ-Prox by creating a comprehensive framework that can be applied to the entire family of splitting algorithms for convex optimization, including the proximal point method (PPM), proximal gradient descent (PGD)~\cite{rockafellar1970convex, ryu2022large}, Douglas Rachford Splitting (DRS)~\cite{lions1979splitting, eckstein1992douglas}, Davis-Yin Splitting (DYS)~\cite{DavisYin2015}, and primal-dual hybrid gradient (PDHG)~\cite{ChambollePock2011}. To our knowledge, the direct approximation of the proximal operator via HJ equations for use in general splitting methods has not been previously explored.

\section{HJ-Prox-based Operator Splitting}
We now show how HJ-Prox can be incorporated into splitting algorithms such as PGD, DRS, DYS, and PDHG. The key idea is simple: by replacing exact proximal steps with their HJ-Prox approximations, we retain convergence guarantees while eliminating the need for closed-form proximal formulas or costly inner optimization loops. For readability, all proofs are provided in the Appendix. 
\subsection{Existing Results}
Our analysis builds on a classical result concerning perturbed fixed-point iterations. Theorem 5.2 in \cite{Combettes2001} established convergence of Krasnosel’skiĭ–Mann (KM) iterations subject to summable errors:
\newcommand{\perturbedKMTheorem}[1]{
    Let $\{x_k\}_{k\geq0}$ be a sequence in $\mathbb{R}^n$ generated by the iteration
    \begin{equation}
        x_{k+1} = T_k x_k + \epsilon_k,
    \end{equation}
    where each $T_k \colon \mathbb{R}^n \to \mathbb{R}^n$ belongs to the class of averaged operators, and the solution set $S = \bigcap_{k \geq 0} \mathrm{Fix}\, T_k$ is nonempty. If the error sequence is summable ($\sum_{k=0}^{\infty} \|\epsilon_k\| < \infty$) and the operators satisfy a closedness condition such that every cluster point of $\{x_k\}$ lies in $S$, then $\{x_k\}_{k\geq0}$ converges to a point $x^* \in S$.
}
 \begin{theorem}[Convergence of Perturbed Krasnosel'skiĭ-Mann Iterates~\cite{Combettes2001}]
    \label{theorem:perturbed_KM}
    \perturbedKMTheorem{main}
\end{theorem}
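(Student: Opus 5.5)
The plan is the standard quasi-Fejér argument; the one essential ingredient is that each $T_k$ is nonexpansive.

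\textbf{Step 1: nonexpansiveness.} Each $T_k$ is averaged, so $T_k=(1-\alpha_k)I+\alpha_k R_k$ with $R_k$ nonexpansive and $\alpha_k\in(0,1)$. Hence $T_k$ is itself nonexpansive. Fix any $z\in S$. Since $T_k z=z$ for every $k$,
\begin{equation*}
\|x_{k+1}-z\| = \|T_k x_k - T_k z + \epsilon_k\| \le \|x_k-z\| + \|\epsilon_k\|.
\end{equation*}

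\textbf{Step 2: quasi-Fejér monotonicity.} The inequality in Step 1 has the form $a_{k+1}\le a_k+b_k$ with $a_k=\|x_k-z\|\ge 0$ and $\sum_k b_k<\infty$. A standard lemma then gives that $a_k$ converges. One sees this by noting that $a_k-\sum_{j<k}b_j$ is nonincreasing and bounded below by $-\sum_j b_j$. Consequently:
\begin{itemize}
\item $\|x_k-z\|$ converges for every $z\in S$;
\item $\{x_k\}$ is bounded, since $\|x_k-z\|\le \|x_0-z\|+\sum_j\|\epsilon_j\|$.
\end{itemize}

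\textbf{Step 3: cluster points.} Because $\{x_k\}$ is bounded in $\mathbb{R}^n$, Bolzano--Weierstrass gives at least one cluster point. By the closedness hypothesis in the statement, every cluster point lies in $S$.

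\textbf{Step 4: uniqueness of the cluster point (Opial-type argument).} Let $x^*$ and $y^*$ be two cluster points, with $x_{k_j}\to x^*$ and $x_{m_j}\to y^*$. Both lie in $S$, so by Step 2 the limits $\lim_k\|x_k-x^*\|$ and $\lim_k\|x_k-y^*\|$ exist. Along the subsequence $x_{k_j}$, the first norm tends to $0$, so $\lim_k\|x_k-x^*\|=0$. Hence the whole sequence converges to $x^*$, which forces $y^*=x^*$. Therefore $x_k\to x^*\in S$.

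\textbf{Main obstacle.} The only delicate point is justifying that the cluster points lie in $S$. In general this requires a demiclosedness property of $I-T_k$ together with $\|x_k-T_kx_k\|\to0$. The latter follows from the averagedness inequality
\begin{equation*}
\|T_kx-z\|^2\le\|x-z\|^2-\tfrac{1-\alpha_k}{\alpha_k}\|x-T_kx\|^2,
\end{equation*}
with the errors handled by boundedness of $\{x_k\}$ and summability of $\|\epsilon_k\|$. Here, however, the statement assumes this closedness property directly, so Step 3 reduces to invoking that hypothesis.
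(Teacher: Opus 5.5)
Your proof is correct: averagedness gives nonexpansiveness, hence the quasi-Fejér inequality $\|x_{k+1}-z\|\le\|x_k-z\|+\|\epsilon_k\|$ for every $z\in S$. This yields boundedness of the sequence and convergence of $\|x_k-z\|$, and the assumed cluster-point property then pins down the limit. The paper gives no proof of its own but cites Theorem 5.2 of Combettes (2001), which rests on this same quasi-Fejér monotonicity mechanism (in Hilbert space, with weak cluster points), so your route matches the cited one; as you note, the substantive work is moved into the cluster-point hypothesis, which the paper must verify separately in each application.
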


Thus, to establish convergence of HJ-Prox–based splitting, it suffices to bound the HJ approximation error. The following result provides the required bound.

\newcommand{\hjproxerrorTheorem}[1]{
Let $f : \Real^n \mapsto \Real$ be LSC and convex. Then the Hamilton-Jacobi approximation incurs errors that are uniformly bounded.
    \begin{eqnarray}
    \sup_{x} \left\|\hjprox_{tf}(x) - \prox_{tf}(x)\right\| & \leq & \sqrt{n t \delta}.
    \end{eqnarray}
}
\begin{theorem}[Error Bound on HJ-Prox~\cite{crandall1983viscosity}]
\label{thm:hj_prox_error_bound}
\hjproxerrorTheorem{main}
\end{theorem}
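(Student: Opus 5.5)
The plan is to read $\hjprox_{tf}(x)$ as the mean of a strongly log-concave Gibbs density, and then bound the distance from that mean to its mode. Completing the square in the Gaussian,
\[
\hjprox_{tf}(x) = \frac{\int y\, e^{-\varphi_x(y)/\delta}\,dy}{\int e^{-\varphi_x(y)/\delta}\,dy},
\qquad
\varphi_x(y) = f(y) + \tfrac{1}{2t}\|y-x\|^2 .
\]
So $\hjprox_{tf}(x)=\E_{\pi_x}[y]$, where $\pi_x \propto e^{-V}$ with $V=\varphi_x/\delta$. Since $f$ is convex, $V$ is $m$-strongly convex with $m = 1/(t\delta)$. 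Its unique minimizer is $p^\ast = \prox_{tf}(x)$.

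Jensen's inequality then gives
\[
\|\E_{\pi_x}[y]-p^\ast\|^2 \le \E_{\pi_x}\|y-p^\ast\|^2 ,
\]
so it suffices to prove the $x$-independent estimate $\E_{\pi_x}\|y-p^\ast\|^2 \le n/m = nt\delta$. Taking the supremum over $x$ is then immediate.

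For this estimate I would use the integration-by-parts (Stein-type) identity. Since $\nabla\cdot(y-p^\ast)=n$, integrating $\nabla\cdot\big((y-p^\ast)e^{-V}\big)$ gives
\[
\E_{\pi_x}\big[\langle \nabla V(y),\, y-p^\ast\rangle\big] = n ,
\]
provided the boundary terms vanish. By optimality, $0\in\partial V(p^\ast)$. Strong monotonicity of $\partial V$ then gives $\langle \nabla V(y), y-p^\ast\rangle \ge m\|y-p^\ast\|^2$ wherever $\nabla V(y)$ exists. Taking expectations yields $m\,\E\|y-p^\ast\|^2\le n$, which is the claimed bound.

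The main obstacle is justifying the identity when $f$ is merely convex and nonsmooth.
\begin{itemize}
\item \emph{Differentiability.} Because $f:\Real^n\to\Real$ is finite and convex, it is locally Lipschitz. Hence $e^{-V}$ is locally Lipschitz, lies in $W^{1,1}_{\mathrm{loc}}$, and is differentiable almost everywhere with $\nabla V(y)\in\partial V(y)$. This is all the divergence theorem on balls $B_R$ needs.
\item \emph{Boundary terms.} The boundary integral is $\int_{\partial B_R}\langle y-p^\ast,\nu\rangle e^{-V}$. It vanishes as $R\to\infty$ because $f$ is minorized by an affine function, so $V(y)\ge \tfrac{m}{2}\|y-p^\ast\|^2 + V(p^\ast)$. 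Thus $e^{-V}$ has Gaussian tails, which beat the polynomial surface growth. The same tail bound makes all moments finite.
\end{itemize}
Should the a.e.-gradient argument feel delicate, a fallback is to replace $f$ by a smooth convex approximation, namely its Moreau envelope $f^\mu$ or a mollification. For that approximation the argument above is classical. One then lets $\mu\to0$: $\prox_{tf^\mu}(x)\to\prox_{tf}(x)$, and by dominated convergence (using the uniform Gaussian tails) $\hjprox_{tf^\mu}(x)\to\hjprox_{tf}(x)$. The bound $\sqrt{nt\delta}$ does not depend on $\mu$, so it passes to the limit.
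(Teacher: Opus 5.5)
Your proposal is correct and follows essentially the same route as the paper: you write the error as the mean of $y-\prox_{tf}(x)$ under the strongly log-concave Gibbs density, bound it by the root second moment, and combine the integration-by-parts identity $\E\langle \nabla V(y),\, y-p^\ast\rangle = n$ with the strong-convexity lower bound $\langle \nabla V(y), y-p^\ast\rangle \ge m\|y-p^\ast\|^2$. The differences are cosmetic. You get that lower bound directly from strong monotonicity of $\partial V$ and use the divergence theorem on balls, whereas the paper chains two strong-convexity inequalities and integrates by parts coordinatewise; your treatment of a.e.\ differentiability and boundary terms is, if anything, more careful than the paper's.
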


This result was originally proved in~\cite{crandall1983viscosity} in the context of viscosity solutions of Hamilton-Jacobi PDEs (and later in~\cite{ZhangEtAl2024, OsherHeatonFung2023, darbon2021bayesian, darbon2023connecting} in the context of proximals).
This uniform error bound guides the choice of $\delta$ in each iteration of our splitting algorithms. In particular, by selecting $\delta_k$ so that the resulting error sequence is summable, Theorem \ref{theorem:perturbed_KM} ensures convergence of the HJ-Prox–based methods if integrals are evaluated exactly~\cite{di2025monte}.
\subsection{Convergence Analysis of HJ-Prox-based Splitting}
Theorem~\ref{thm:hj_prox_error_bound} \emph{assumes exact integral evaluation}, while in practice we use Monte Carlo sampling to approximate integrals. This introduces additional error which we bound in the  following result.
\newcommand{\ppmTheorem}[1]{ 
Let $f,g$ be proper, LSC, convex and $L$-Lipschitz.
    Consider the HJ-Prox-based PPM iteration given by 
    \begin{eqnarray}
        x_{k+1} & = & \widehat{\prox}^{\delta_k}_{t_k(f+g)}\left(x_k\right), 
    \end{eqnarray}
    with parameters satisfying 
the conditions of Assumption~\ref{assump: changeing t}.
    Then $x_k$ converges almost surely to a minimizer of $f + g$.
    }

\newcommand{\pgdTheorem}[1]{ 
Let $f,g$ be proper, LSC, convex and $L$-Lipschitz, with $f$ additionally $L'$-smooth.
    Consider the HJ-Prox-based PGD iteration given by 
    \begin{eqnarray}
        x_{k+1} & = & \widehat{\prox}^{\delta_k}_{t_kg}\left(x_k - t_k \nabla f(x_k)\right), 
    \end{eqnarray}
    with step size $0 < t_k < 1/L'$ and parameters satisfying 
the conditions of Assumption~\ref{assump: changeing t}.
    Then $x_k$ converges almost surely to a minimizer of $f + g$.
    }

\newcommand{\hjproxMCerrorTheorem}[1]{
Let $f : \Real^n \mapsto \Real$ be convex, LSC and $L$-Lipschitz. Let $\sampledhjprox_{tf}(x)$ denote the Hamilton-Jacobi approximation with finite sample size $N$ and $e(x)  = \sampledhjprox_{tf}(x) - \prox_{tf}(x)$ denote its approximation error. For $\alpha \in (0,1)$ and $N \geq \frac{8J_\star}{\alpha}$, we have the following probabilistic error bound.
    \begin{eqnarray}
P\left(\left\|e(x)\right\| > \sqrt{\frac{8J_\star M_\star}{\alpha N}} +\sqrt{nt\delta}\right) \amp \leq \amp \alpha,
    \end{eqnarray}
    where $J_\star =\exp\left(\frac{2}{\delta}L^2t\right)$ and $M_\star = nt\delta + (2\sqrt{nt\delta}+3Lt)^2$.
}

\begin{theorem}[Monte Carlo Bound on HJ-Prox]
\label{thm:hj_prox_error_bound_mc}
\hjproxMCerrorTheorem{main}
\end{theorem}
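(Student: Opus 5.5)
The plan is to split the error with the triangle inequality,
\[
\|e(x)\| \le \|\sampledhjprox_{tf}(x) - \hjprox_{tf}(x)\| + \|\hjprox_{tf}(x) - \prox_{tf}(x)\|.
\]
The second term is at most $\sqrt{nt\delta}$ by Theorem~\ref{thm:hj_prox_error_bound}. It therefore suffices to show that the Monte Carlo term exceeds $\sqrt{8J_\star M_\star/(\alpha N)}$ with probability at most $\alpha$.

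Write $y_i = x + \sqrt{\delta t}\,z_i$ with $z_i$ i.i.d. standard normal, and set $w(y) = \exp(-(f(y)-f(x))/\delta)$. Subtracting $f(x)$ changes nothing in the ratio. Define the sample means
\[
A_N = \frac1N\sum_i (y_i - p)\,w(y_i), \qquad B_N = \frac1N\sum_i w(y_i),
\]
where $p = \hjprox_{tf}(x)$. Then
\[
\sampledhjprox_{tf}(x) - p = A_N/B_N, \qquad \mathbb{E}A_N = 0, \qquad \mathbb{E}B_N = \mathbb{E}w =: \mu.
\]
The plan is a ratio (delta-method style) Chebyshev argument. On the event $B_N \ge \mu/2$ we have $\|A_N/B_N\| \le 2\|A_N\|/\mu$. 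I will allot probability $\alpha/2$ to each of the two bad events.

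\textbf{Denominator.} By Chebyshev,
\[
P(B_N < \mu/2) \le \frac{4\operatorname{Var}(w)}{N\mu^2} \le \frac{4\,\mathbb{E}w^2}{N\mu^2}.
\]
I will bound $\mathbb{E}w^2/\mu^2 \le J_\star$. Lipschitzness gives $|f(y)-f(x)| \le L\sqrt{\delta t}\,\|z\|$. A crude bound through $\|z\|$ produces dimension dependence, so instead I will use Jensen on the denominator together with log-concavity-type estimates of Gaussian Lipschitz functionals. Concretely, $g(z) = -(f(x+\sqrt{\delta t}z)-f(x))/\delta$ is $(L\sqrt{t/\delta})$-Lipschitz. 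The Gaussian moment bound $\mathbb{E}e^{2g}/(\mathbb{E}e^{g})^2 \le \exp(\operatorname{Lip}^2)$, or a Herbst/Poincaré-type inequality, then gives a ratio of at most $\exp(2L^2t/\delta) = J_\star$. This yields $P(B_N<\mu/2) \le 4J_\star/N \le \alpha/2$ exactly when $N \ge 8J_\star/\alpha$, which matches the stated hypothesis.

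\textbf{Numerator.} By Chebyshev in $\Real^n$,
\[
P\bigl(\|A_N\| > s\bigr) \le \frac{\mathbb{E}\|y-p\|^2 w^2}{N s^2}.
\]
I need $\mathbb{E}[\|y-p\|^2w^2] \le J_\star M_\star \mu^2$. I will use $\|y-p\| \le \|y-x\| + \|x-\prox_{tf}(x)\| + \|\prox_{tf}(x)-p\|$. Here $\|x - \prox_{tf}(x)\| \le Lt$, since the prox residual lies in $t\partial f$, and $\|\prox_{tf}(x) - p\| \le \sqrt{nt\delta}$ by Theorem~\ref{thm:hj_prox_error_bound}. This gives
\[
\|y-p\|^2 \le 2\|y-x\|^2 + 2(Lt+\sqrt{nt\delta})^2.
\]
Taking expectations against the weight $w^2$ (normalized by $\mathbb{E}w^2$), the term $\|y-x\|^2$ has tilted mean about $nt\delta$ plus a drift of order $(Lt)^2$. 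This is because tilting a Gaussian by $e^{2g}$ with $g$ Lipschitz shifts its mean by at most $2Lt$ in norm. Bounding cross terms is what produces the specific constant $M_\star = nt\delta + (2\sqrt{nt\delta}+3Lt)^2$, and I will tune the splitting to land there. Combined with $\mathbb{E}w^2 \le J_\star\mu^2$, choosing $s = \sqrt{2J_\star M_\star/(\alpha N)}$ gives probability at most $\alpha/2$. Then $2s/\mu$ scaling yields $\sqrt{8J_\star M_\star/(\alpha N)}$ after the factor $2$ from $B_N \ge \mu/2$.

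\textbf{Main obstacle.} The hard step is the weighted second moment bound, namely controlling the Gaussian tilted by $w^2$ and pinning down $M_\star$. First I would try a change of measure: the density proportional to $w^2$ times the Gaussian is a log-concave perturbation, and its mean and spread can be compared to those of $\prox$ via the same argument that proves Theorem~\ref{thm:hj_prox_error_bound} with $\delta$ replaced by $\delta/2$. That comparison gives mean within $\sqrt{nt\delta}$-type distance of $\prox_{tf}(x)$ and covariance trace at most $nt\delta$. If the constants do not match exactly, the structure of the bound, $J_\star M_\star/(\alpha N)$ plus the bias term, is still preserved.
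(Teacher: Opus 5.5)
Your architecture matches the paper's proof. Both use the triangle split, the self-normalized ratio with Chebyshev on the denominator at half its mean, and Chebyshev on the numerator. Both change measure to $\tilde\pi\propto w^2q$, which is exactly the paper's tilted measure $\pi^*$. Both bound $\E w^2/(\E w)^2\le J_\star$ by Gaussian concentration plus Jensen; note that Herbst with $\lambda=2$ gives $\exp(2\,\mathrm{Lip}^2)=J_\star$, which is all you need, not the $\exp(\mathrm{Lip}^2)$ you first quote. Both use the $\alpha/2+\alpha/2$ split with $N\ge 8J_\star/\alpha$. Also, because your weights do not have mean one, the numerator threshold must be $s\mu$ rather than $s$ for the final $2s$ to come out. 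The gap is the step you flagged, $\E_{\tilde\pi}\|y-p\|^2\le M_\star$, and neither route you offer proves it. Your comparison route rests on a false claim. The density $\tilde\pi\propto\exp\bigl(-(2f(y)+\tfrac{1}{2t}\|y-x\|^2)/\delta\bigr)$ is the HJ density of $2f$ at the same $(t,\delta)$. Equivalently, it is the HJ density of $f$ with $(t,\delta)$ replaced by $(2t,\delta/2)$, not by $\delta/2$ alone. So Theorem~\ref{thm:hj_prox_error_bound} places its mean within $\sqrt{nt\delta}$ of $\prox_{2tf}(x)$, not of $\prox_{tf}(x)$. For $f(y)=\langle a,y\rangle$ with $\|a\|=L$, the tilted mean is $x-2ta$ while $\prox_{tf}(x)=p=x-ta$. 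That gap of $Lt$ does not shrink with $\delta$, and this $O(Lt)$ shift is exactly where the $3Lt$ in $M_\star$ comes from.

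Your crude route does not reach $M_\star$ either. Start from $\|y-p\|^2\le 2\|y-x\|^2+2(Lt+\sqrt{nt\delta})^2$ and use the best inputs $\operatorname{tr}\operatorname{Cov}_{\tilde\pi}(y)\le nt\delta$ and $\|\E_{\tilde\pi}y-x\|\le 2Lt$. You get $4nt\delta+4Lt\sqrt{nt\delta}+10L^2t^2$. This exceeds $M_\star=5nt\delta+12Lt\sqrt{nt\delta}+9L^2t^2$ once $Lt>(4+\sqrt{17})\sqrt{nt\delta}$, i.e.\ in the small-$\delta$ regime where the theorem is used. Your hedge that ``the structure is preserved'' concedes the stated constant. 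The missing idea is to apply the bias--variance identity to $\|y-p\|^2$ directly: $\E_{\tilde\pi}\|y-p\|^2=\operatorname{tr}\operatorname{Cov}_{\tilde\pi}(y)+\|\E_{\tilde\pi}y-p\|^2$. The trace is at most $nt\delta$ by the $\tfrac{1}{t\delta}$-strong log-concavity of $\tilde\pi$ (Poincar\'e/Brascamp--Lieb). The bias is at most $\|\E_{\tilde\pi}y-\prox_{2tf}(x)\|+\|\prox_{2tf}(x)-\prox_{tf}(x)\|+\|\prox_{tf}(x)-p\|\le 2\sqrt{nt\delta}+3Lt$. This is the paper's lemma and gives $M_\star$ exactly. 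Your own integration-by-parts remark gives an even cleaner finish. It yields $\E_{\tilde\pi}y-x=-2t\,\E_{\tilde\pi}\nabla f(y)$, and likewise $p-x=-t$ times the mean of $\nabla f$ under the untilted density $\propto wq$. So the bias is at most $3Lt$, and $\E_{\tilde\pi}\|y-p\|^2\le nt\delta+9L^2t^2\le M_\star$.
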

Theorem~\ref{theorem:perturbed_KM} is deterministic and requires a summable error sequence. 
Since HJ–Prox introduces random errors, we verify that summability holds almost surely and then invoke Theorem~\ref{theorem:perturbed_KM} pathwise. To this end, we must require restrictions (at iteration $k$) on the smoothness parameter $\delta_k$, the number of samples $N_k$, and the proximal parameter $t_k$, and the tail probability bound $\alpha_k$.

\newcommand{\changingtAssumption}[1]{
Assume the following.
\begin{enumerate}[nosep,leftmargin=*]
    \item The sequence $t_k >0$ converges to 0 at rate $\mathcal{O}(1/k)$. 
    \item The sequences $\delta_k > 0$ and $\alpha_k \in (0,1)$ satisfy $\sum_{k=1}^\infty \delta_k < \infty$ and $\sum_{k=1}^\infty \alpha_k < \infty$.
    \item The sequence $N_k$ satisfies both $N_k \geq \frac{8 J_{k}}{\alpha_k}$ and $\sum_{k=1}^\infty \sqrt{\frac{8 J_k M_k}{\alpha_k N_k}}
        +
        \sqrt{n\,t_k\,\delta_k} < \infty$ where $J_k$ and $M_k$ are the constants in Theorem~\ref{thm:hj_prox_error_bound_mc} evaluated at $(t_k,\delta_k)$.
\end{enumerate}
}
\newcommand{\fixedtAssumption}[1]{
Fix $t > 0$ and assume the following.
\begin{enumerate}[nosep,leftmargin=*]
    \item The sequences $\sqrt{\delta_k} > 0$ and $\alpha_k \in (0,1)$ satisfy $\sum_{k=1}^\infty \sqrt{\delta_k} < \infty$ and $\sum_{k=1}^\infty \alpha_k < \infty$. 
   \item The sequence $N_k$ satisfies both $N_k \geq \frac{8 J_{k}}{\alpha_k}$ and $\sum_{k=1}^\infty \sqrt{\frac{8 J_k M_k}{\alpha_k N_k}}
        +
        \sqrt{n\,t\,\delta_k} < \infty$ where $J_k$ and $M_k$ are the constants in Theorem~\ref{thm:hj_prox_error_bound_mc} evaluated at $\delta_k$.
\end{enumerate}
}

\begin{assumption}
    \changingtAssumption{main}
    \label{assump: changeing t}
\end{assumption}

\newcommand{\bclemma}[1]{
At iteration $k$, let $\sampledhjprox_{t f}(x)$ denote the estimator of $\prox_{t f}(x)$ using $N_k$ samples drawn from $\mathcal{N}(x, \delta_k t I)$. Define the per-iteration approximation error
\begin{eqnarray}
e_k(x) &=& \hjprox_{t f}(x) - \prox_{t f}(x).
\end{eqnarray}
Let $\alpha_k \in (0,1)$ and let $b_k$ denote the high-probability bound from Theorem~\ref{thm:hj_prox_error_bound_mc},
\begin{eqnarray}
b_k &=& \sqrt{\frac{8 J_\star M_\star}{\alpha_k N_k}} + \sqrt{n t \delta_k}.
\end{eqnarray}
If the parameters $(\delta_k, N_k, \alpha_k)$ satisfy
\begin{eqnarray}
\sum_{k=0}^\infty \alpha_k < \infty &\text{and}& \sum_{k=0}^\infty b_k < \infty,
\end{eqnarray}
then the errors along the algorithmic iterates generated by $x_{k+1} = T_k(x_k)$ are summable almost surely:
\begin{eqnarray}
\sum_{k=0}^\infty \|e_k(x_k)\| &<& \infty \amp \text{almost surely}.
\end{eqnarray}
}

We rely on Theorem~\ref{theorem:perturbed_KM} to prove the convergence of the four HJ-Prox-based operator splitting methods. 
Associated with each algorithm of interest is an algorithm map $T_k$ that takes the current iterate $x_k$ to the next iterate $x_{k+1}$. The key idea is to show that $T_k$ for PPM and PGD satisfy the conditions in  Theorem~\ref{theorem:perturbed_KM}. And in particular, the almost sure summability of the errors when using HJ-Prox can be guaranteed via Theorem~\ref{thm:hj_prox_error_bound_mc}.

\begin{theorem}[HJ-Prox PPM]
    \label{theorem:hjppm}
    \ppmTheorem{main}
\end{theorem}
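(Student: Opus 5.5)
We cast HJ-Prox PPM as a perturbed Krasnosel'ski\u{\i}--Mann iteration and then invoke Theorem~\ref{theorem:perturbed_KM} pathwise. Set $h = f+g$. It is proper, LSC, convex and $2L$-Lipschitz, so Theorem~\ref{thm:hj_prox_error_bound_mc} applies to $h$ with $L$ replaced by $2L$; the constants in Assumption~\ref{assump: changeing t} are understood with this Lipschitz constant. Define $T_k = \prox_{t_k h}$ and $\epsilon_k = \widehat{\prox}^{\delta_k}_{t_k h}(x_k) - \prox_{t_k h}(x_k)$. Then the iteration reads $x_{k+1} = T_k x_k + \epsilon_k$. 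Each $T_k$ is firmly nonexpansive, hence $\tfrac12$-averaged. Moreover $\mathrm{Fix}\,T_k = \operatorname{argmin} h$ for every $t_k>0$, so $S = \bigcap_k \mathrm{Fix}\,T_k = \operatorname{argmin} h$, which we assume nonempty (implicit in the claim that $x_k$ converges to a minimizer).

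\textbf{Almost sure summability of the errors.} Conditional on $x_k$, the $N_k$ samples are drawn fresh. Theorem~\ref{thm:hj_prox_error_bound_mc}, applied at the point $x_k$ with parameters $(t_k,\delta_k,N_k,\alpha_k)$ (valid since $N_k \ge 8J_k/\alpha_k$), gives $P(\|\epsilon_k\| > b_k \mid x_k) \le \alpha_k$, where $b_k = \sqrt{8J_kM_k/(\alpha_k N_k)} + \sqrt{n t_k \delta_k}$. Taking expectations yields $P(\|\epsilon_k\|>b_k)\le\alpha_k$. Since $\sum\alpha_k<\infty$, Borel--Cantelli shows that almost surely $\|\epsilon_k\|\le b_k$ for all but finitely many $k$. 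Because $\sum b_k<\infty$ by Assumption~\ref{assump: changeing t}, we get $\sum_k\|\epsilon_k\|<\infty$ almost surely. This is exactly the summability statement for the sampled iterates. We fix such a sample path for the rest of the argument.

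\textbf{Cluster points lie in $S$ (the main obstacle).} The closedness hypothesis is where varying $t_k\to0$ causes trouble, since $\prox_{t_k h}\to I$ and the fixed-point equation alone carries no information. My plan is to use a subgradient/energy argument instead. Write $p_k = T_k x_k$ and $u_k = (x_k - p_k)/t_k \in \partial h(p_k)$. Lipschitz continuity gives $\|u_k\|\le 2L$, so $\|x_k - p_k\|\le 2Lt_k\to0$. For any $x^\star\in S$, nonexpansiveness together with the three-point inequality gives
\[
\|x_{k+1}-x^\star\|^2 \le \|x_k-x^\star\|^2 - 2t_k\bigl(h(p_k)-h(x^\star)\bigr) + c\,\|\epsilon_k\| + \|\epsilon_k\|^2,
\]
where $c$ bounds $2\|p_k - x^\star\|$; such a bound exists because the path is bounded (quasi-Fej\'er with summable errors). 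Summing, and using $t_k \asymp 1/k$ so that $\sum t_k=\infty$, we obtain $\sum_k t_k(h(p_k)-h^\star)<\infty$, hence $\liminf_k h(p_k)=h^\star$. Along a subsequence realizing this liminf, boundedness gives a cluster point $\bar x$ with $h(\bar x)=h^\star$ by continuity of $h$ and $\|x_k-p_k\|\to0$. Combined with quasi-Fej\'er monotonicity, the standard Opial-type argument (convergence of $\|x_k-\bar x\|$ along a subsequence tending to $0$) upgrades this to convergence of the entire sequence to $\bar x\in S$.

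If the literal closedness condition of Theorem~\ref{theorem:perturbed_KM} is preferred instead, I would note that the above shows every cluster point of the quasi-Fej\'er sequence that arises as such a limit lies in $S$, and then conclude as in that theorem. The delicate points are justifying $\sum t_k=\infty$ from the $\mathcal{O}(1/k)$ rate (I would read it as $t_k\asymp 1/k$) and handling $h(p_k)$ versus $h(x_k)$, which Lipschitz continuity resolves.
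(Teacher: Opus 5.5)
Your proposal is correct. It differs from the paper's proof at the one step that carries real content: the closedness condition.

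The paper proves Lemma~\ref{lem: ChangingTPPM} for the \emph{exact} PPM sequence $x_{k+1}=\prox_{t_k(f+g)}(x_k)$. That lemma uses Fej\'er monotonicity, monotone decrease of $f(x_k)$, and $\sum t_k(f(x_{k+1})-f^\star)<\infty$ with $t_k=1/(k+1)$. The paper then cites it as showing that every cluster point of the \emph{perturbed} HJ-PPM sequence lies in $S$, and appeals to Theorem~\ref{theorem:perturbed_KM} pathwise. The transfer from the exact path to the perturbed path is never argued, and the monotone-decrease step does not hold verbatim once errors are added.

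You instead run the same three-point/summability mechanism directly on the perturbed path, carrying the error terms $c\|\epsilon_k\|+\|\epsilon_k\|^2$. You replace monotonicity of $h(x_k)$ with a $\liminf$ argument, and finish with quasi-Fej\'er monotonicity: $\lim\|x_k-\bar x\|$ exists and vanishes along a subsequence. This makes the argument self-contained and rigorous for the actual random iterates, and it makes Theorem~\ref{theorem:perturbed_KM} unnecessary for PPM. The paper's route is shorter and keeps PPM in the same template as DRS, DYS and PDHG, but at the cost of that unjustified transfer.

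Two of your refinements are points the paper glosses over:
\begin{itemize}
\item applying Theorem~\ref{thm:hj_prox_error_bound_mc} conditionally on $x_k$ before Borel--Cantelli;
\item using the Lipschitz constant $2L$ of $f+g$ in $J_k,M_k$.
\end{itemize}
You are also right that $\sum_k t_k=\infty$ is genuinely needed and is not implied by the literal ``$\mathcal{O}(1/k)$'' in Assumption~\ref{assump: changeing t}. The paper's lemma silently assumes $t_k=1/(k+1)$, so your reading $t_k\asymp 1/k$ matches what the paper actually uses.
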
 
\begin{theorem}[HJ-Prox PGD]
    \label{theorem:hjpgd}
    \pgdTheorem{main}
\end{theorem}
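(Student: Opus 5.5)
We will cast the HJ-Prox PGD iteration as a perturbed Krasnosel'skiĭ--Mann iteration and apply Theorem~\ref{theorem:perturbed_KM} pathwise. Set $T_k = \prox_{t_k g}\circ(I - t_k\nabla f)$ and $\epsilon_k = \widehat{\prox}^{\delta_k}_{t_k g}(x_k - t_k\nabla f(x_k)) - \prox_{t_k g}(x_k - t_k\nabla f(x_k))$, so that $x_{k+1} = T_k x_k + \epsilon_k$ holds exactly.

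\textbf{Deterministic part.} Since $f$ is convex and $L'$-smooth, the Baillon--Haddad theorem makes $\nabla f$ $1/L'$-cocoercive. Hence $I - t_k\nabla f$ is averaged whenever $0<t_k<2/L'$, and in particular for $t_k<1/L'$. The map $\prox_{t_k g}$ is firmly nonexpansive, and a composition of averaged operators is averaged, so each $T_k$ is averaged. For every $t>0$, $\mathrm{Fix}\,T_k$ equals the zero set of $\nabla f + \partial g$, which is the set of minimizers of $f+g$. This set does not depend on $k$, so $S=\operatorname{argmin}(f+g)$, which we assume nonempty as implicitly required.

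\textbf{Closedness condition.} This is the step I expect to be the main obstacle, because Assumption~\ref{assump: changeing t} forces $t_k\to0$. The fixed-point residual $\|x_k - T_k x_k\|/t_k$ is the natural stationarity measure, and with vanishing steps its control must come from the sum $\sum t_k = \infty$. That sum diverges because $t_k \asymp 1/k$, which I read as $t_k$ being of order exactly $1/k$, not merely $\mathcal{O}(1/k)$.

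My first attempt is the standard quasi-Fejér argument. For $x^\star\in S$, use
\[
\|T_k x - x^\star\|^2 \le \|x-x^\star\|^2 - 2t_k\big(F(T_kx)-F(x^\star)\big) + \text{(nonpositive terms)},
\]
where $F=f+g$. Adding the summable errors $\epsilon_k$, and noting that boundedness of the iterates follows from Fejér monotonicity up to summable perturbations, gives $\sum_k t_k\big(F(x_{k+1})-F^\star\big)<\infty$. Because $\sum t_k=\infty$, this yields $\liminf_k F(x_k)=F^\star$. Lower semicontinuity of $F$ then puts some cluster point in $S$. Convergence of $\|x_k-x^\star\|$ for every $x^\star\in S$, combined with the Opial argument that underlies Theorem~\ref{theorem:perturbed_KM}, upgrades this to convergence of the whole sequence.

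If the direct verification of the closedness condition as stated is awkward, I will instead invoke the Opial-type conclusion directly. The Fejér inequality above shows that $\lim\|x_k-x^\star\|$ exists for all $x^\star\in S$, and $\liminf F(x_k)=F^\star$ produces a cluster point in $S$. From these two facts convergence follows.

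\textbf{Stochastic part.} It remains to show $\sum\|\epsilon_k\|<\infty$ almost surely. The point $y_k=x_k-t_k\nabla f(x_k)$ depends on the past samples. Conditional on the past, however, the fresh $N_k$ samples drawn at step $k$ satisfy Theorem~\ref{thm:hj_prox_error_bound_mc} with $(t_k,\delta_k,\alpha_k)$ at $y_k$, because the bound is uniform in $x$ and uses only the Lipschitz constant $L$ of $g$. Therefore $P(\|\epsilon_k\|>b_k)\le\alpha_k$ with
\[
b_k=\sqrt{8J_kM_k/(\alpha_kN_k)}+\sqrt{nt_k\delta_k}.
\]
Since $\sum\alpha_k<\infty$, the Borel--Cantelli lemma shows that almost surely $\|\epsilon_k\|\le b_k$ for all but finitely many $k$. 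Since $\sum b_k<\infty$ by Assumption~\ref{assump: changeing t}, the errors are summable on a full-measure event. On that event we apply the deterministic argument above, which gives $x_k\to x^\star\in\operatorname{argmin}(f+g)$ almost surely.
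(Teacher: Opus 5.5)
Your proof is correct. Its skeleton matches the paper's: you write $x_{k+1}=T_kx_k+\epsilon_k$ with $T_k=\prox_{t_kg}\circ(I-t_k\nabla f)$, obtain $\sum_k\|\epsilon_k\|<\infty$ almost surely from Theorem~\ref{thm:hj_prox_error_bound_mc} and Borel--Cantelli, and use the three-point inequality with $\sum_k t_k=\infty$ for the closedness condition.

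The difference is in that last step. The paper proves a cluster-point lemma for the \emph{exact} PGD sequence, using Fej\'er monotonicity and the monotone decrease $F(x_{k+1})\le F(x_k)$. It then asserts, without further argument, that every cluster point of the \emph{perturbed} sequence lies in $S$. The monotone decrease it relies on does not hold verbatim once errors are added.

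You instead run the three-point inequality directly on the perturbed iterates and absorb the errors through quasi-Fej\'er boundedness. You settle for $\liminf_k F(x_k)=F^\star$, which gives a cluster point $\bar x\in S$. Because $\|x_k-\bar x\|$ converges, this upgrades to convergence of the whole sequence. That turns the closedness hypothesis of Theorem~\ref{theorem:perturbed_KM} into a consequence rather than an input. Your argument is therefore self-contained and fills the transfer step the paper leaves implicit. The paper's route is more modular: a standard exact-iterate lemma plugged into a black-box theorem.

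Your conditioning on the past before applying the Monte Carlo bound at the random point $x_k-t_k\nabla f(x_k)$ is also the right justification for Borel--Cantelli; the paper glosses over this.

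Two minor remarks. First, the inequality you sum naturally controls $F(T_kx_k)-F^\star$ rather than $F(x_{k+1})-F^\star$. This is harmless, since $\|x_{k+1}-T_kx_k\|=\|\epsilon_k\|\to0$ and $F$ is Lipschitz under the hypotheses. Second, you read Assumption~\ref{assump: changeing t} as guaranteeing $\sum_k t_k=\infty$. The paper's lemma needs exactly the same thing, and $t_k=\mathcal{O}(1/k)$ alone does not ensure it.
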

\newcommand{\drsTheorem}[1]{
Let $f,g$ be proper, convex, LSC, and $L$-Lipschitz. Consider the HJ-Prox–based DRS iteration given by  
    \begin{equation}
        \begin{split}
        x_{k+1/2} & \amp = \amp \widehat{\prox}^{\delta_k}_{tf}(z_k), \\
        x_{k+1} & \amp = \amp \widehat{\prox}^{\delta_k}_{tg}(2x_{k+1/2}-z_k), \\
        z_{k+1} & \amp = \amp z_k + x_{k+1} - x_{k+1/2},
        \end{split}
    \end{equation}
    with parameters satisfying 
the conditions of Assumption ~\ref{assump: fixed t}. Then $x_k$ converges almost surely to a minimizer of $f+g$.
}
An example of parameter choices satisfying Assumption~\ref{assump: changeing t} is given by
$\delta_k = \frac{1}{k^{p+1}}, \qquad 
\alpha_k = \frac{1}{k^{p+2}}, \qquad 
t_k = \frac{1}{k}$,
with $p>0$. Allowing $t_k \to 0$ plays a key role in controlling the required sample complexity. In particular, the number of samples required at iteration $k$ satisfies $N_k = \mathcal{O}\!\left(e^{k^p}\,k^{p+2}\right),$
which reduces the growth of the sample complexity from exponential to subexponential in \(k\). Although the exponential factor ultimately dominates asymptotically, for values of \(p\) close to zero the term \(e^{k^p}\) grows extremely slowly. As a result, over a wide range of practically relevant iterations, the overall sample complexity exhibits behavior that is effectively polynomial. 


As DRS, DYS, and PDHG are more complex splitting methods, $t$ must remain constant at each iteration in order to satisfy the requirement that $S$ be non-empty in Theorem~\ref{theorem:perturbed_KM}. We therefore use a slightly different assumption for these algorithms.

\begin{assumption}
    \fixedtAssumption{main}
    \label{assump: fixed t}
\end{assumption}
As in the proofs of PGD and PPM, these assumptions ensure conditions needed to apply  Theorem~\ref{theorem:perturbed_KM}. 

\begin{theorem}[HJ-Prox DRS]
    \label{theorem:hjdrs}
    \drsTheorem{main}
\end{theorem}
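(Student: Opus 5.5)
We cast the HJ-Prox DRS iteration as a perturbed Krasnosel'ski\u{\i}--Mann iteration on the $z$ variable and then apply Theorem~\ref{theorem:perturbed_KM} pathwise. With $t$ fixed, the exact DRS map is
\begin{equation}
T z = z + \prox_{tg}\bigl(2\prox_{tf}(z) - z\bigr) - \prox_{tf}(z) = \tfrac12 z + \tfrac12 R_{tg}R_{tf} z,
\end{equation}
where $R_{th} = 2\prox_{th} - I$ is the reflected resolvent. Each $R_{th}$ is nonexpansive because $f,g$ are proper, LSC and convex. Hence $T$ is $\tfrac12$-averaged, and we take $T_k \equiv T$ for all $k$. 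Its fixed-point set is nonempty whenever $f+g$ has a minimizer, and if $z^*\in\mathrm{Fix}\,T$ then $\prox_{tf}(z^*)$ is a minimizer. I will take existence of a minimizer as implicit, as the theorem does. Since $T$ is a single continuous averaged operator, the closedness condition holds by the standard argument: $\|z_k - Tz_k\|\to 0$, so every cluster point lies in $\mathrm{Fix}\,T$.

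\textbf{Error decomposition.} Write the sampled iteration as $z_{k+1} = T z_k + \epsilon_k$. Let
\begin{equation}
e^f_k = \sampledhjprox_{tf}(z_k) - \prox_{tf}(z_k), \qquad e^g_k = \sampledhjprox_{tg}(w_k) - \prox_{tg}(w_k),
\end{equation}
where $w_k = 2x_{k+1/2} - z_k$ is the actual (perturbed) argument. Then
\begin{equation}
\epsilon_k = \prox_{tg}(w_k) - \prox_{tg}\bigl(2\prox_{tf}(z_k) - z_k\bigr) + e^g_k - e^f_k.
\end{equation}
The arguments of the two $\prox_{tg}$ terms differ by $2e^f_k$, and $\prox_{tg}$ is nonexpansive, so
\begin{equation}
\|\epsilon_k\| \le 3\|e^f_k\| + \|e^g_k\|.
\end{equation}

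\textbf{Almost sure summability.} This is the main obstacle, because the inputs $z_k$ and $w_k$ are random and depend on earlier samples. The bound in Theorem~\ref{thm:hj_prox_error_bound_mc} is uniform in $x$: the constants $J_\star$ and $M_\star$ depend only on $L$, $t$, $\delta_k$ and $n$. So I condition on the history up to the current step, apply the theorem at the realized point, and then take expectations. This gives
\begin{equation}
P\bigl(\|e^f_k\| > b_k\bigr) \le \alpha_k, \qquad P\bigl(\|e^g_k\| > b_k\bigr) \le \alpha_k.
\end{equation}
Here $b_k$ is as in the preceding lemma with $t$ fixed, and Assumption~\ref{assump: fixed t} guarantees both $N_k \ge 8J_k/\alpha_k$ and $\sum_k b_k < \infty$. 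Since $\sum_k 2\alpha_k < \infty$, Borel--Cantelli implies that almost surely only finitely many of these events occur. On that event, $\|\epsilon_k\| \le 4b_k$ for all large $k$, so $\sum_k \|\epsilon_k\| < \infty$. In effect this repeats the earlier summability lemma, applied to both proximal evaluations.

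\textbf{Conclusion.} On this probability-one event, Theorem~\ref{theorem:perturbed_KM} gives $z_k \to z^* \in \mathrm{Fix}\,T$. The iterates $x_k$ are then handled as follows:
\begin{enumerate}[nosep,leftmargin=*]
\item $x_{k+1/2} = \prox_{tf}(z_k) + e^f_k$, and $e^f_k \to 0$, so $x_{k+1/2} \to \prox_{tf}(z^*) =: x^*$.
\item $x_{k+1} = \prox_{tg}(w_k) + e^g_k$ with $w_k \to 2x^* - z^*$, so by continuity $x_{k+1} \to \prox_{tg}(2x^*-z^*)$.
\item The fixed-point equation $Tz^* = z^*$ says exactly that $\prox_{tg}(2x^*-z^*) = x^*$, so $x_k \to x^*$.
\end{enumerate}
Finally, $x^*$ minimizes $f+g$ by the standard characterization of the DRS fixed points.
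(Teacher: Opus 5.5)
Your proposal is correct and follows essentially the same route as the paper. Both arguments use the fixed DRS map $T$ on the $z$-variable and the same error decomposition, giving $\|\epsilon_k\| \le 3\|e^f_k\| + \|e^g_k\|$ from nonexpansiveness of $\prox_{tg}$; both then use Borel--Cantelli for almost sure summability and Theorem~\ref{theorem:perturbed_KM} with demiclosedness of the constant operator. Your version is slightly more careful in three places: you evaluate the $g$-error at the actual perturbed argument $w_k = 2x_{k+1/2} - z_k$, you handle the random inputs by conditioning, and you use the fixed-point identity to show that both $x_{k+1/2}$ and $x_{k+1}$ converge to $x^*$, whereas the paper only tracks $\widehat{\prox}^{\delta_k}_{tf}(\hat z_k)$.
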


\newcommand{\dysTheorem}[1]{
For DYS, consider $f + g + h$. Let $f,g,h$ be proper, LSC, convex and $L$-Lipschitz, with $h$ additionally $L'$-smooth. Consider the HJ-Prox–based DYS algorithm given by
    \begin{equation}
        \begin{split}
        y_{k+1} & \amp = \amp \widehat{\prox}^{\delta_k}_{tf}(x_k), \\
        z_{k+1} & \amp = \amp \widehat{\prox}^{\delta_k}_{tg}\bigl(2y_{k+1} - x_k - t \nabla h(y_{k+1})\bigr)\\
        x_{k+1} & \amp = \amp x_k + z_{k+1} - y_{k+1}\\
        \end{split}
    \end{equation}
    with parameters satisfying 
the conditions of Assumption~\ref{assump: fixed t} and $0< t< 2/L'$. Then $x_k$ converges almost surely to a minimizer of $f + g + h$.
}
\begin{theorem}[HJ-Prox DYS]
    \label{theorem:hjdys}
    \dysTheorem{main}
\end{theorem}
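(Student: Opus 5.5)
We will cast the HJ-Prox DYS recursion as a perturbed Krasnosel'skiĭ--Mann iteration in the variable $x_k$ and then apply Theorem~\ref{theorem:perturbed_KM} pathwise. With exact proximals, the DYS map is
\begin{equation*}
T x = x + \prox_{tg}\bigl(2\prox_{tf}(x) - x - t\nabla h(\prox_{tf}(x))\bigr) - \prox_{tf}(x).
\end{equation*}
For $0<t<2/L'$, the result of \cite{DavisYin2015} shows that $T$ is averaged. Since $t$ is fixed, we take $T_k \equiv T$, so $S=\mathrm{Fix}\,T$. This set is nonempty whenever $f+g+h$ has a minimizer (which we assume, as is implicit in the statement). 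Moreover, $\prox_{tf}(\mathrm{Fix}\,T)$ is exactly the set of minimizers. The closedness condition is automatic: $T$ is continuous, and the standard KM argument gives $\|x_k - Tx_k\|\to 0$ under summable errors, so every cluster point is a fixed point.

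\textbf{Error decomposition.} Write $\hat y_{k+1} = \prox_{tf}(x_k) + e^f_k$ and $\hat z_{k+1} = \prox_{tg}(u_k) + e^g_k$, where $u_k = 2\hat y_{k+1} - x_k - t\nabla h(\hat y_{k+1})$. Let $y^\star = \prox_{tf}(x_k)$ and let $u^\star_k$ be the exact-argument version of $u_k$. Then $x_{k+1} = Tx_k + \epsilon_k$ with
\begin{equation*}
\epsilon_k = \bigl[\prox_{tg}(u_k) - \prox_{tg}(u_k^\star)\bigr] + e^g_k - e^f_k .
\end{equation*}
Because $\prox_{tg}$ is nonexpansive and $\nabla h$ is $L'$-Lipschitz,
\begin{equation*}
\|u_k - u_k^\star\| \le (2 + tL')\|e^f_k\|,
\end{equation*}
and therefore
\begin{equation*}
\|\epsilon_k\| \le (3+tL')\|e^f_k\| + \|e^g_k\|.
\end{equation*}
It therefore suffices to show that both $\sum_k \|e^f_k\|$ and $\sum_k\|e^g_k\|$ are finite almost surely.

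\textbf{Almost sure summability.} The bound in Theorem~\ref{thm:hj_prox_error_bound_mc} holds for any fixed point $x$. At iteration $k$, we condition on the past, which determines $x_k$ (respectively $u_k$). The fresh samples are independent of the past, so
\begin{equation*}
P\bigl(\|e^f_k\| > b_k \mid \mathcal F_k\bigr) \le \alpha_k,
\end{equation*}
and similarly for $e^g_k$ with $b_k$ evaluated at $\delta_k$ and fixed $t$; both $f$ and $g$ are $L$-Lipschitz, so the same constants $J_k, M_k$ apply. Taking expectations gives unconditional bounds. Since $\sum_k \alpha_k<\infty$, Borel--Cantelli shows that almost surely $\|e^f_k\|\le b_k$ and $\|e^g_k\|\le b_k$ for all but finitely many $k$. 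Assumption~\ref{assump: fixed t} gives $\sum_k b_k<\infty$, so $\sum_k\|\epsilon_k\|<\infty$ almost surely.

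\textbf{Conclusion and main obstacle.} On this full-measure event, Theorem~\ref{theorem:perturbed_KM} gives $x_k\to x^*\in\mathrm{Fix}\,T$. Then $\hat y_{k+1} = \prox_{tf}(x_k)+e^f_k \to \prox_{tf}(x^*)$, because $\prox_{tf}$ is continuous and $e^f_k\to 0$; this limit is a minimizer of $f+g+h$. The claim ``$x_k$ converges to a minimizer'' should be read through this $\prox_{tf}$ shadow sequence, as is standard for DYS. I expect the delicate step to be the conditioning argument. Theorem~\ref{thm:hj_prox_error_bound_mc} is stated for deterministic inputs, but the inputs $x_k$ and $u_k$ are random and depend on earlier samples. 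This is handled by noting that the bound is uniform in $x$, which justifies applying it conditionally before using Borel--Cantelli.
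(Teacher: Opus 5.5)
Your proof is correct and follows the same route as the paper. Both arguments fix the DYS operator $T$, split the error using nonexpansiveness of $\prox_{tg}$ and Lipschitzness of $\nabla h$, apply Borel--Cantelli to the Monte Carlo bound, and then invoke Theorem~\ref{theorem:perturbed_KM} with $T_k \equiv T$. Your details are more accurate than the paper's in four places:
\begin{itemize}
\item the constant is $(3+tL')$, not $(1+tL)$;
\item the $g$-error must be evaluated at the actual argument $u_k$, not at $\hat z_k$;
\item the conditioning on past samples needs to be made explicit, as you do;
\item the conclusion must be read through $\prox_{tf}(x^*)$.
\end{itemize}
The last point matters because the paper's final claim, that the governing sequence itself converges into $\arg\min(f+g+h)$, fails in general. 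For example, with $f=2|\cdot|$, $g=|\cdot-1|$ and $h=0$, one has $\mathrm{Fix}\,T=\{t\}$, while the unique minimizer is $0$.
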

\newcommand{\pdhgTheorem}[1]{
Let $f,g$ be proper, convex, and LSC. Consider the HJ-Prox–based PDHG algorithm given by
    \begin{equation}
        \begin{split}
        y_{k+1} & \amp = \amp \widehat{\prox}^{\delta_k}_{\sigma g^*}(y_k + \sigma A x_k), \\
        x_{k+1} & \amp = \amp \widehat{\prox}^{\delta_k}_{\tau f}(x_k - \tau A^\top y_{k+1}),
        \end{split}
    \end{equation}
    with parameters $\tau,\sigma > 0$ satisfying $\tau\sigma \|A\|^2 < 1$ and
the conditions of $t$ in Assumption ~\ref{assump: fixed t}, where $g^*$ denotes the Fenchel conjugate of $g$. Then $x^k$ converges almost surely to a minimizer of $f(x)+g(Ax)$. 
}
\begin{theorem}[HJ-Prox PDHG]
    \label{theorem:hjpdhg}
    \pdhgTheorem{main}
\end{theorem}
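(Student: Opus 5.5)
The plan is to write the HJ-Prox PDHG iteration as a perturbed fixed-point iteration in the product space and then invoke Theorem~\ref{theorem:perturbed_KM}. Stack $w_k = (x_k, y_k)$. Recall that exact PDHG with $\tau\sigma\|A\|^2<1$ is the proximal point method for the monotone operator
\[
\mathcal{M}(x,y) = \bigl(\partial f(x) + A^\top y,\; \partial g^*(y) - Ax\bigr)
\]
in the metric induced by
\[
P = \begin{pmatrix} \tau^{-1} I & -A^\top \\ -A & \sigma^{-1} I \end{pmatrix}.
\]
This $P$ is positive definite exactly under $\tau\sigma\|A\|^2<1$. Hence the exact map $T$ (the $y$-update followed by the $x$-update) is firmly nonexpansive, thus averaged, in $\|\cdot\|_P$. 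Its fixed point set $S=\operatorname{zer}\mathcal{M}$ consists of saddle points, and the $x$-components of saddle points minimize $f(x)+g(Ax)$. Since $\tau,\sigma$ are fixed, $T_k \equiv T$, so $S=\operatorname{Fix}T$ is the same at every iteration. The closedness condition holds because $T$ is continuous: any cluster point $\bar w$ of a sequence with $\|w_k - Tw_k\|\to 0$ satisfies $\bar w = T\bar w$. Nonemptiness of $S$ I would assume, via existence of a saddle point under standard qualification. Theorem~\ref{theorem:perturbed_KM} applies in any Euclidean norm, and $\|\cdot\|_P$ is equivalent to $\|\cdot\|$, so summability transfers.

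Next I write the error. Let $\epsilon^y_k = \widehat{\prox}^{\delta_k}_{\sigma g^*}(y_k+\sigma Ax_k) - \prox_{\sigma g^*}(y_k+\sigma A x_k)$. The $x$-step has two errors: the HJ error $\epsilon^x_k$ at the perturbed input, and the propagation of $\epsilon^y_k$ through $\prox_{\tau f}$. Since $\prox_{\tau f}$ is 1-Lipschitz, the latter is bounded by $\tau\|A\|\|\epsilon^y_k\|$. Thus
\[
w_{k+1} = Tw_k + \epsilon_k, \qquad \|\epsilon_k\| \le (1+\tau\|A\|)\|\epsilon^y_k\| + \|\epsilon^x_k\|.
\]
Summability of $\sum_k\|\epsilon_k\|$ therefore reduces to almost sure summability of each HJ error along the iterates.

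For that I use Theorem~\ref{thm:hj_prox_error_bound_mc} with the parameters of Assumption~\ref{assump: fixed t}. Conditional on the past, the fresh samples at step $k$ give $P(\|\epsilon^y_k\|>b_k)\le\alpha_k$, and likewise for $\epsilon^x_k$. The bound is uniform in the evaluation point, so it holds at the random inputs. Since $\sum\alpha_k<\infty$, Borel--Cantelli shows that almost surely only finitely many of these events occur. Then $\|\epsilon_k\|\lesssim b_k$ eventually, and $\sum b_k<\infty$ by the assumption. Finally, apply Theorem~\ref{theorem:perturbed_KM} pathwise on this probability-one event to get $w_k\to w^*\in S$, so $x_k\to x^*$, a minimizer.

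The main obstacle is that Theorem~\ref{thm:hj_prox_error_bound_mc} requires the function to be $L$-Lipschitz, and the PDHG statement does not assume it. In particular $g^*$ is typically not finite or Lipschitz: if $g$ is $L$-Lipschitz, then $g^*$ is the indicator-like function on a ball. I would first try to impose, implicitly via ``the conditions of Assumption~\ref{assump: fixed t}'', that $f$ and $g^*$ satisfy the hypotheses of Theorem~\ref{thm:hj_prox_error_bound_mc} with some constant $L$. Failing that, I would sample $\prox_{\sigma g^*}$ through Moreau's identity using HJ-Prox of $g/\sigma$. The error is then scaled by $\sigma$, and the Lipschitz bound applies to $g$ instead.
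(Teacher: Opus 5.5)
Your overall architecture (perturbed KM in a weighted metric, propagating the HJ errors through nonexpansive proxes, Borel--Cantelli) is the same as the paper's. The gap is that the step everything rests on is false: the exact map of the iteration in the statement is not averaged in any norm.

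\textbf{Why your identification fails.} The stated updates contain no extrapolation. They form the Arrow--Hurwicz scheme (PDHG with $\theta=0$), not Chambolle--Pock. Writing out $0\in\mathcal{M}(w_{k+1})+P(w_{k+1}-w_k)$ with your $P$ gives $x_{k+1}=\prox_{\tau f}(x_k-\tau A^\top y_k)$ followed by $y_{k+1}=\prox_{\sigma g^*}(y_k+\sigma A(2x_{k+1}-x_k))$. A $y$-first ordering corresponds to flipping the signs of the off-diagonal blocks of $P$, and it requires $2y_{k+1}-y_k$ in the primal step.

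\textbf{The statement itself fails without extrapolation.} Take $A=1$ on $\mathbb{R}$, $f\equiv 0$ and $g=\iota_{\{0\}}$. Then $g^*\equiv 0$, both proxes are the identity, $f$ and $g^*$ are Lipschitz, and $x^\star=0$ is the unique minimizer. The exact map is $w\mapsto Mw$ with
\[
M=\begin{pmatrix} 1-\tau\sigma & -\tau\\ \sigma & 1\end{pmatrix},\qquad \det M=1,\qquad \operatorname{tr}M=2-\tau\sigma\in(1,2),
\]
so its eigenvalues are $e^{\pm i\theta}$ with $\theta\neq 0$. An averaged linear map can have a unimodular eigenvalue only at $1$. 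From any $w_0\neq 0$ the exact iterates circle an ellipse forever.

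Adding the HJ errors does not help. Here the smoothing bias is zero, and the errors are independent nondegenerate zero-mean Gaussians, summable under Assumption~\ref{assump: fixed t}. Since $M$ is power-bounded in both directions,
\[
w_k=M^k\Bigl(w_0+\sum_{j<k}M^{-j-1}\epsilon_j\Bigr)=M^kc+o(1),
\]
with $c\neq 0$ almost surely, so $x_k$ does not converge. The exact scheme fails the same way for the Lipschitz choice $g=\lambda|\cdot|$ started near the origin, where the clipping in $\prox_{\sigma g^*}$ never activates. So your Moreau-identity fallback does not rescue it either.

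\textbf{The paper's proof has the same hole.} Its lemma asserts that this $T$ is firmly nonexpansive in the block-diagonal metric $V=\mathrm{diag}(\tau^{-1}I,\sigma^{-1}I)$, citing Chambolle--Pock. That theorem concerns the extrapolated scheme, and the example above refutes the lemma. So this cannot be fixed by following the paper; the algorithm itself must change, e.g.\ to $x_{k+1}=\widehat{\prox}^{\delta_k}_{\tau f}\bigl(x_k-\tau A^\top(2y_{k+1}-y_k)\bigr)$ with metric $\begin{pmatrix}\tau^{-1}I & A^\top\\ A & \sigma^{-1}I\end{pmatrix}$.

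For that corrected scheme, the rest of your plan works and is more careful than the paper's:
\begin{itemize}
\item The error bound becomes $(1+2\tau\|A\|)\|\epsilon^y_k\|+\|\epsilon^x_k\|$.
\item Conditioning on the past is exactly what justifies Borel--Cantelli at random evaluation points.
\item Closedness follows from continuity of the fixed map $T$.
\item Your observation that Theorem~\ref{thm:hj_prox_error_bound_mc} needs $f$ and $g^*$ finite and Lipschitz is a genuine issue. The paper passes over it by applying that bound to $g^*$ directly.
\end{itemize}
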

\subsection{Why Splitting?}
\label{subsec:why_splitting}
Since HJ-PPM can be applied to general objectives, one might be tempted to ignore alternative splittings such as HJ-PGD, HJ-DRS, HJ-DYS, and HJ-PDHG altogether. However, theoretical results show that the Monte Carlo error associated with the HJ-Prox approximation depends exponentially on the squared Lipschitz constant of the function being approximated. The constant in Theorem~\ref{thm:hj_prox_error_bound_mc},
\begin{equation}
    J_\star = \exp\!\left(\frac{2 L^2 t}{\delta}\right),
\end{equation}
governs the sample complexity required to control the approximation error. Consequently, reducing the effective Lipschitz constant entering the HJ-Prox approximation has a dramatic impact on both the variance of the estimator and the resulting theoretical computational cost. When HJ-Prox is applied directly to a composite objective of the form \(f(x) + g(x)\), the Lipschitz constant satisfies
\begin{eqnarray}
L_{f+g} &\le& L_f + L_g,
\end{eqnarray}
whenever $f$ and $g$ are $L_f$- and $L_g$-Lipschitz, respectively. The corresponding worst-case Monte Carlo constant is therefore bounded by
\begin{align}
J_{f+g}
&= \exp\!\left(\frac{2 L_{f+g}^2 t}{\delta}\right) \\
& \le  \exp\!\left(\frac{2 (L_f + L_g)^2 t}{\delta}\right) \nonumber \\
&= \exp\!\left(\frac{2 (L_f^2 + L_g^2 + 2 L_f L_g)\, t}{\delta}\right).
\end{align}

By contrast, operator splitting decouples the proximal evaluations, enabling HJ-Prox to be applied separately to \(f\) and \(g\) (or only to one of the functions if the other has an explicit proximal formula). This results in independent Monte Carlo constants
\begin{eqnarray}
J_f = \exp\!\left(\frac{2 L_f^2 t}{\delta}\right),
\qquad
J_g = \exp\!\left(\frac{2 L_g^2 t}{\delta}\right),
\end{eqnarray}
with a total approximation complexity proportional to $J_f + J_g$. Operator splitting substantially reduces the error resulting from $J_{f+g}$ and yields a  tighter theoretical bound (especially if one of the proximals has a closed form solution) on the Monte Carlo error. The benefits of applying HJ-Prox in a selective manner within operator splitting algorithms are demonstrated empirically in Figure~\ref{fig: NNLasso} and explained in Section~\ref{subsec:split_vs_nonsplit}. 
\subsection{HJ-Prox Discussion}
The above theory illustrates how 
HJ-Prox can be seamlessly integrated into a broad class of operator splitting algorithms by treating each method as a perturbed fixed-point iteration and controlling the approximation error via classical Krasnosel'skiĭ–Mann theory. Crucially, exact proximal operators are not required for convergence; if the HJ-Prox approximation errors are summable almost surely, standard splitting methods retain their global convergence guarantees. 

The smoothness parameter $\delta_k$ governs a fundamental tradeoff between bias and stability: decreasing $\delta_k$ reduces smoothing bias and improves asymptotic accuracy, but overly small values can lead to numerical instability. From a theoretical perspective, HJ-PPM and HJ-PGD require only subexponential sample complexity, whereas HJ-DRS, HJ-DYS, and HJ-PDHG require exponential sample complexity. However, we find in practice that fixing the number of samples per iteration and choosing a moderately small $\delta_k$ with a slowly decreasing schedule yields stable convergence across our benchmarks, despite the conservatism of the sufficient conditions in Assumptions \ref{assump: changeing t} and \ref{assump: fixed t}.

\begin{figure*}[h!]
\label{fig: LASSO and MLR}
  \centering
  \resizebox{\textwidth}{!}{%
  \begin{tabular}{@{}cccc@{}}
    \multicolumn{4}{c}{LASSO: $\underset{\beta}{\arg\min}\ \frac{1}{2}\|X\beta - y\|_2^2 + \lambda\|\beta\|_1$}\\
    \includegraphics[width=0.275\textwidth]{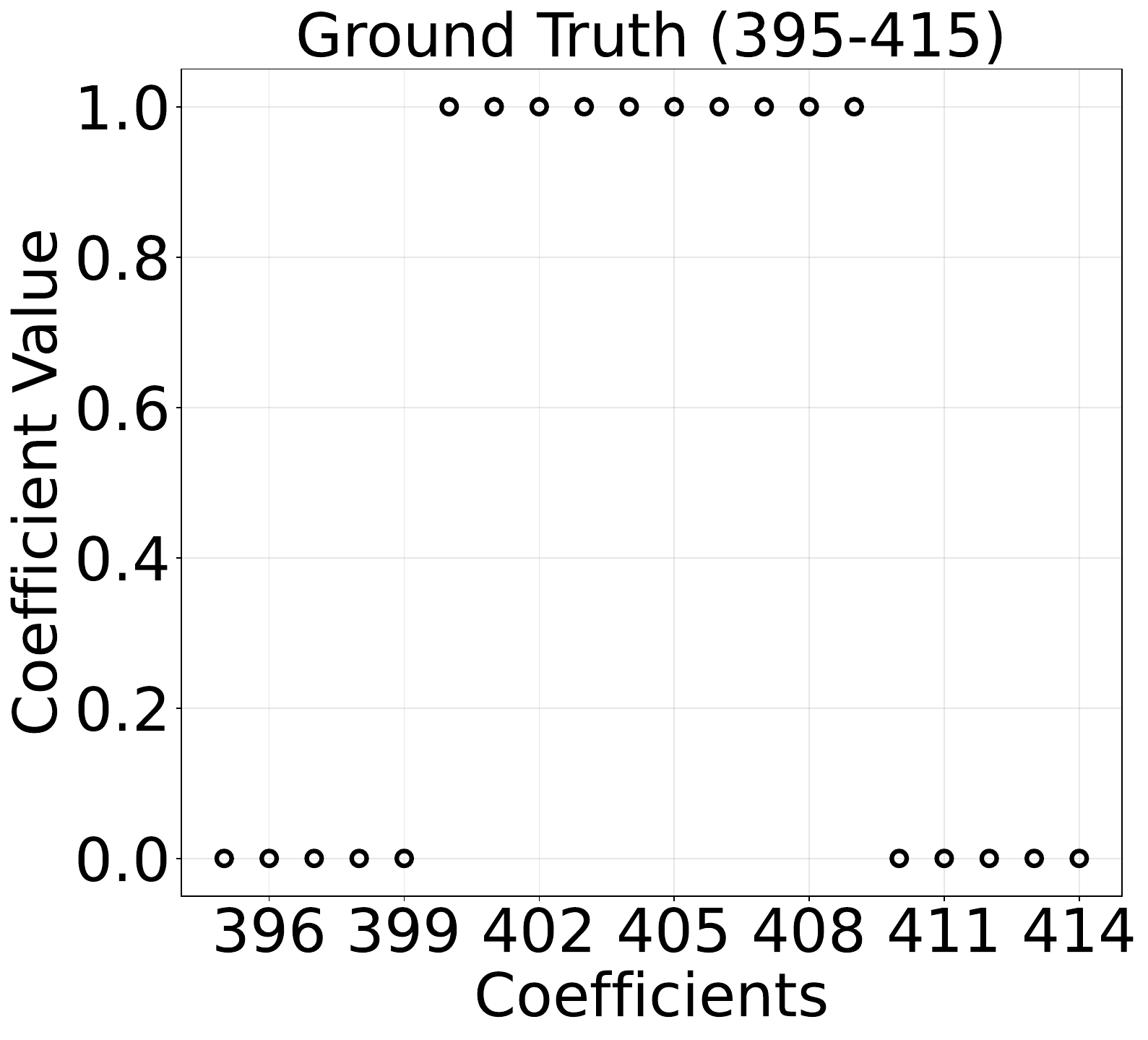} &
    \includegraphics[width=0.275\textwidth]{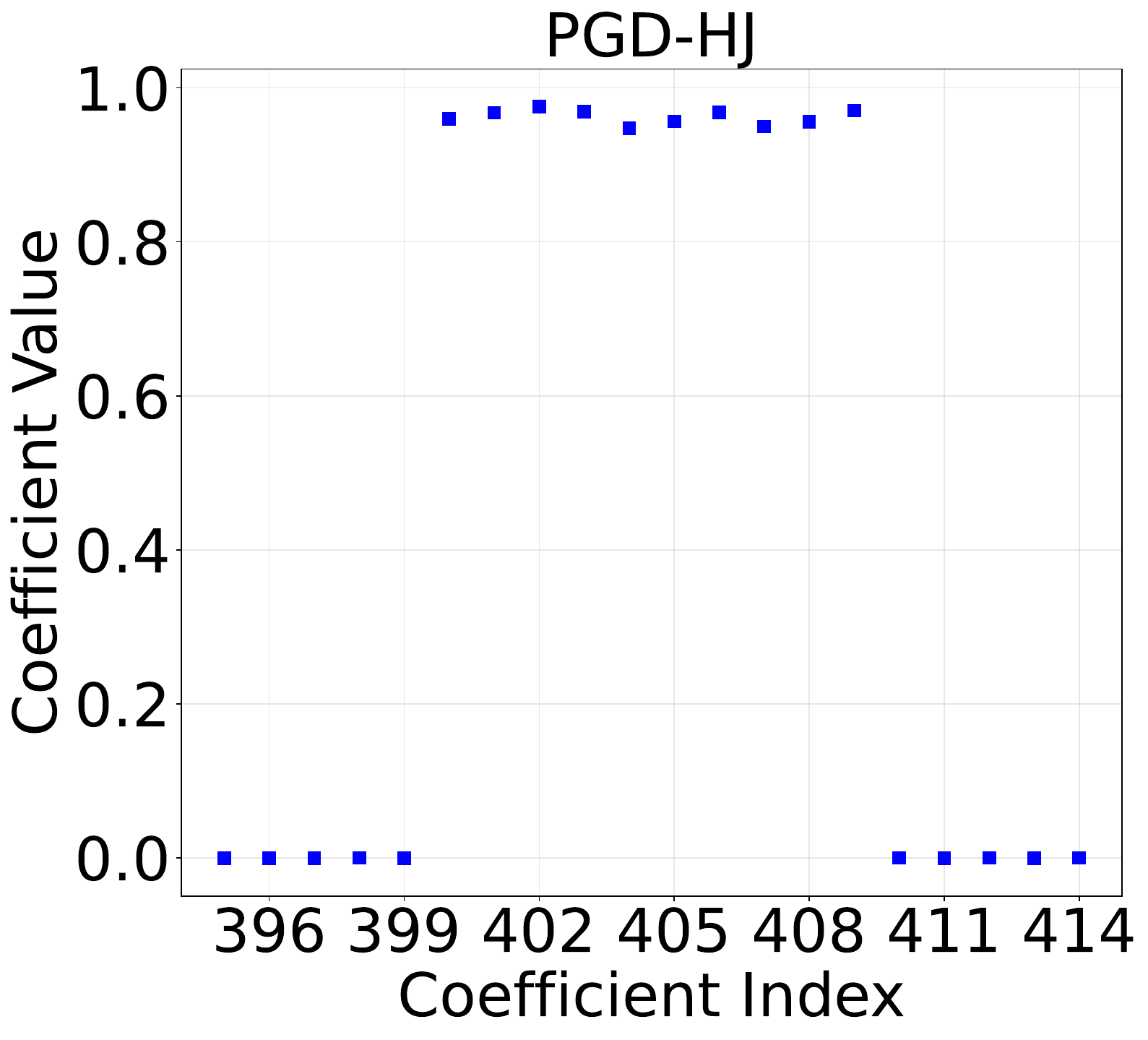} &
    \includegraphics[width=0.275\textwidth]{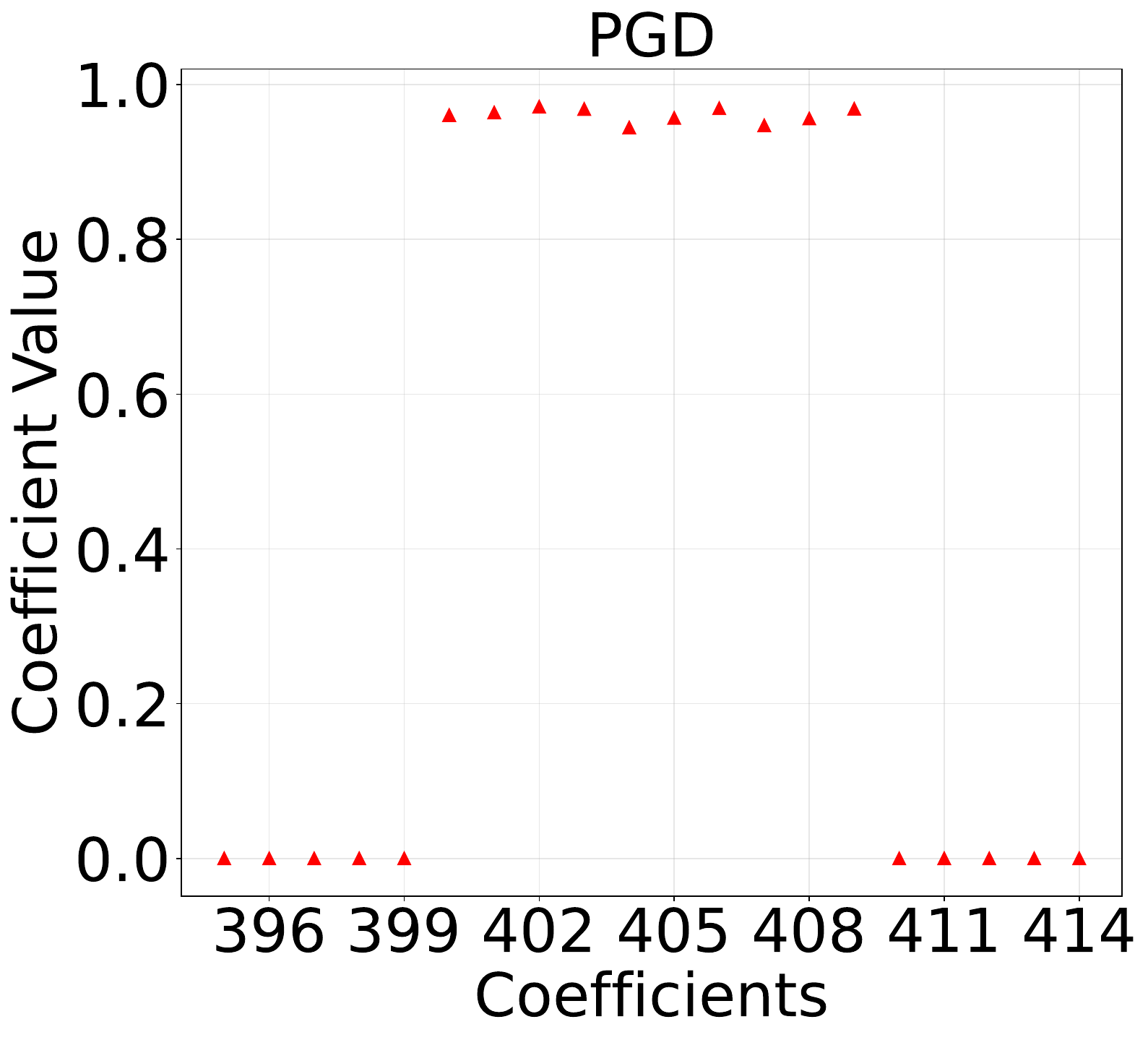} &
    \includegraphics[width=0.275\textwidth]{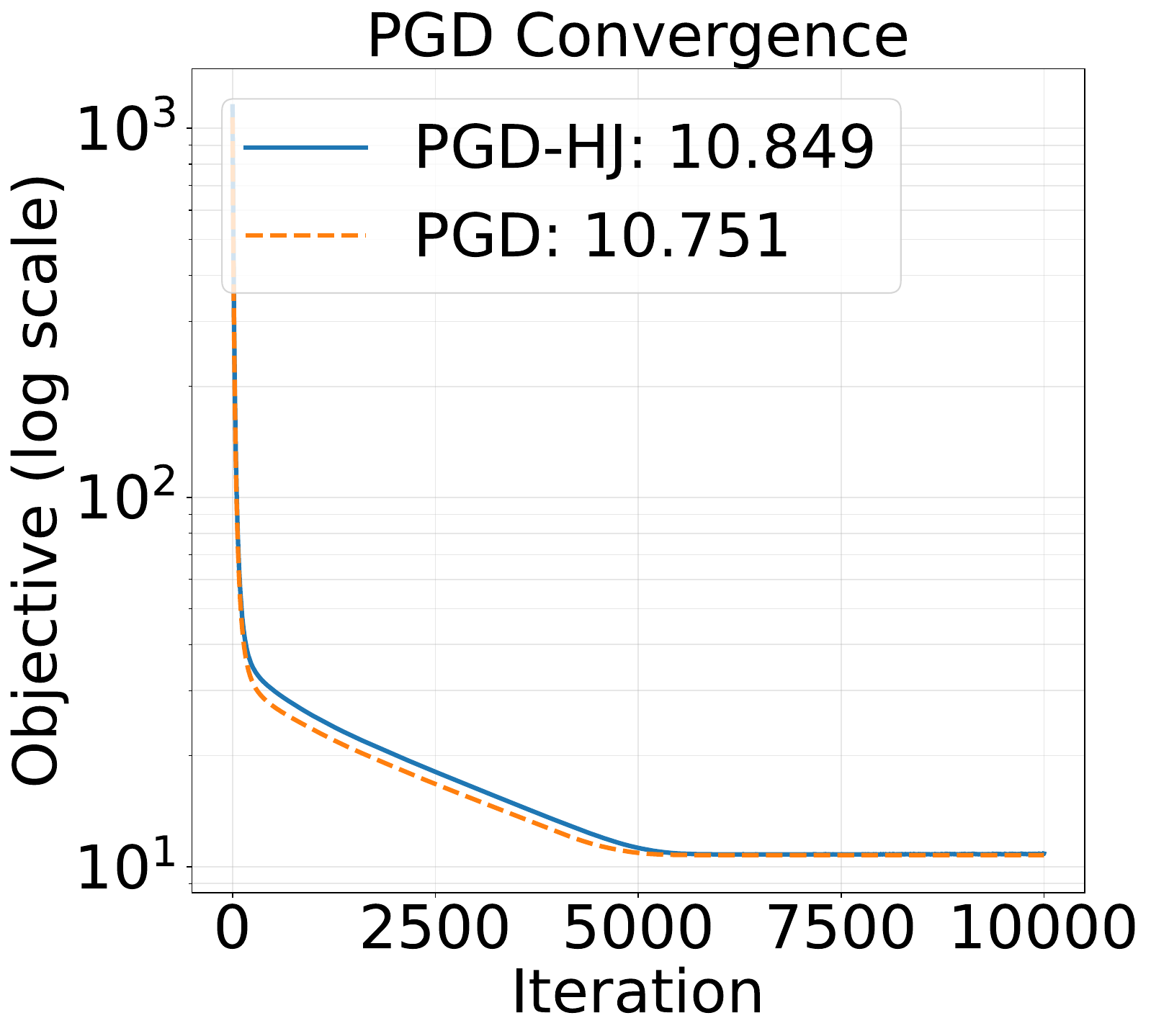} \\
     \multicolumn{4}{c}{Sparse Group LASSO: $\underset{\beta}{\arg\min}\; \frac{1}{2}\,\| X \beta - y\|_{2}^{2} +\lambda_1\sum_{g=1}^{G}\| \beta_{g}\|_{2} +\lambda_2\| \beta\|_{1}$}\\
    \includegraphics[width=.275\textwidth]{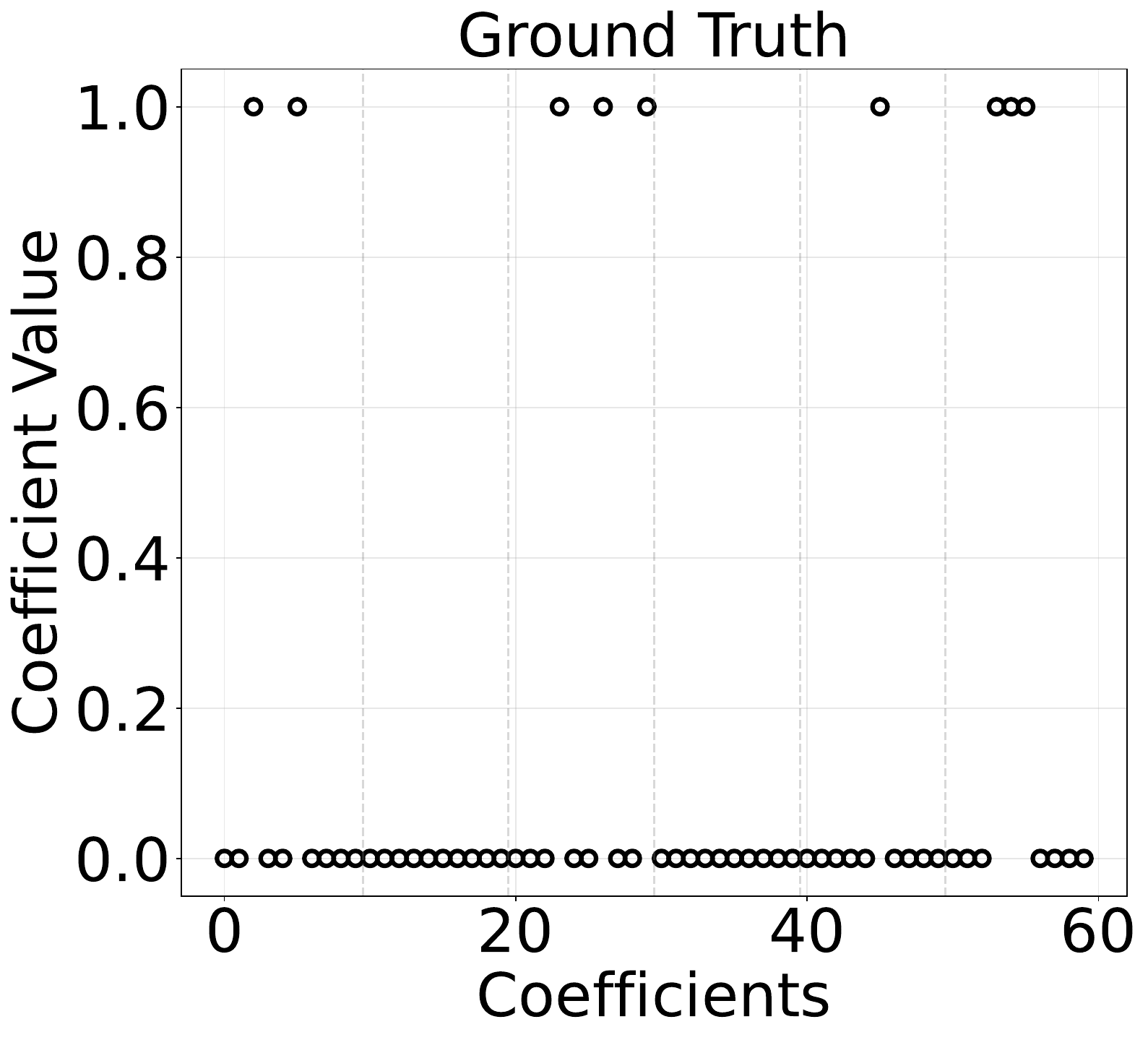} &
    \includegraphics[width=.275\textwidth]{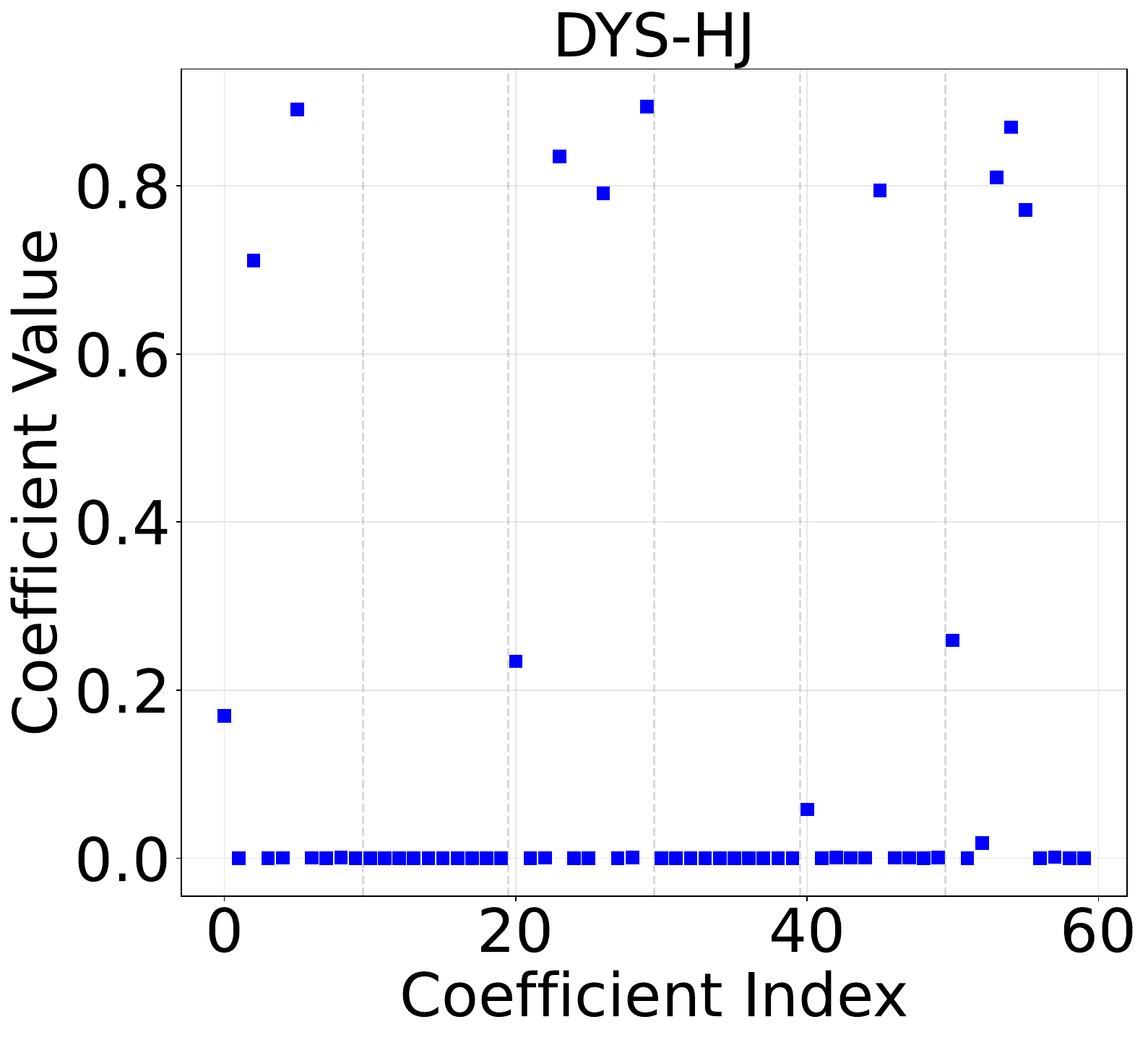} &
    \includegraphics[width=.275\textwidth]{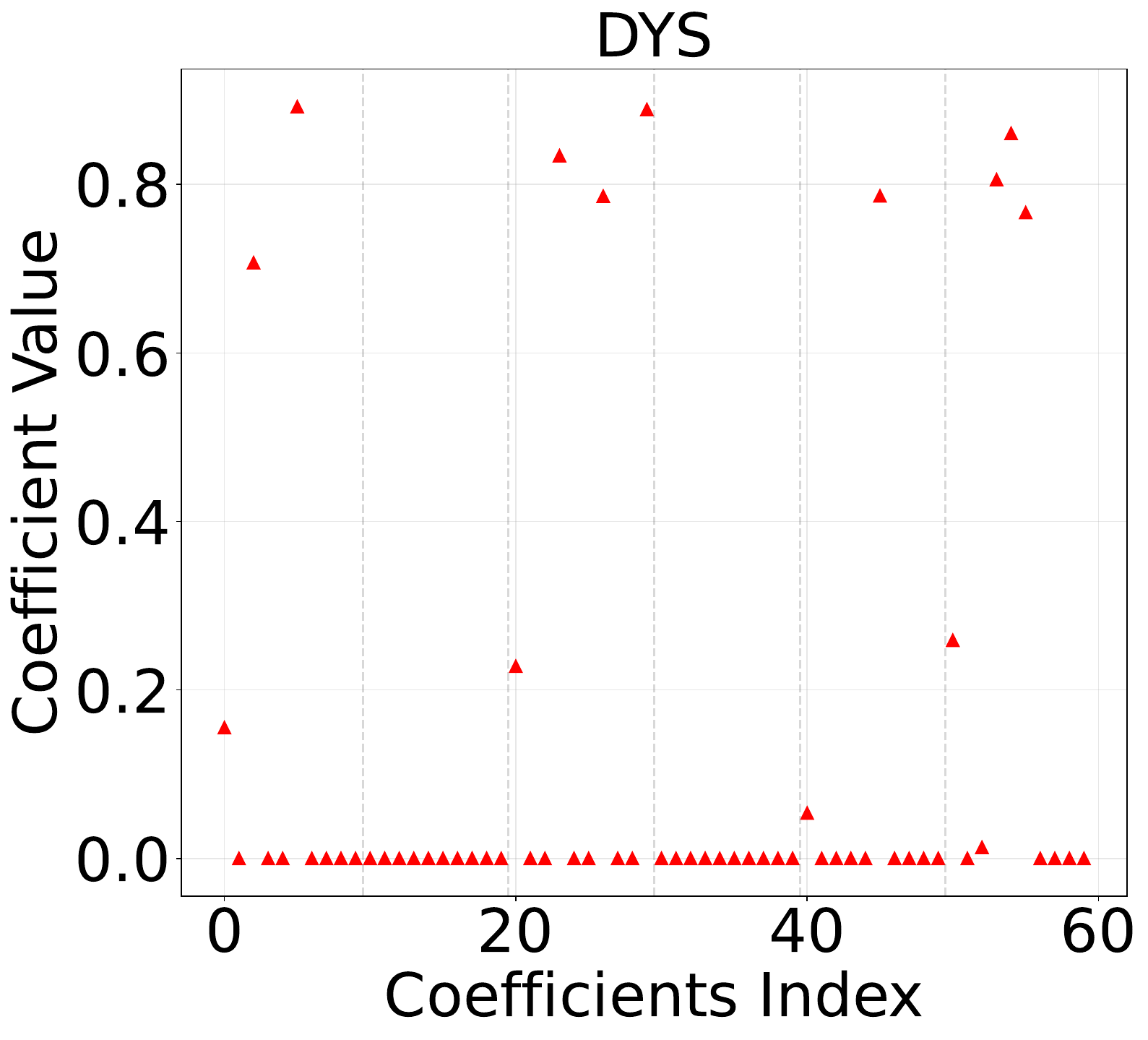} &
    \includegraphics[width=.275\textwidth]{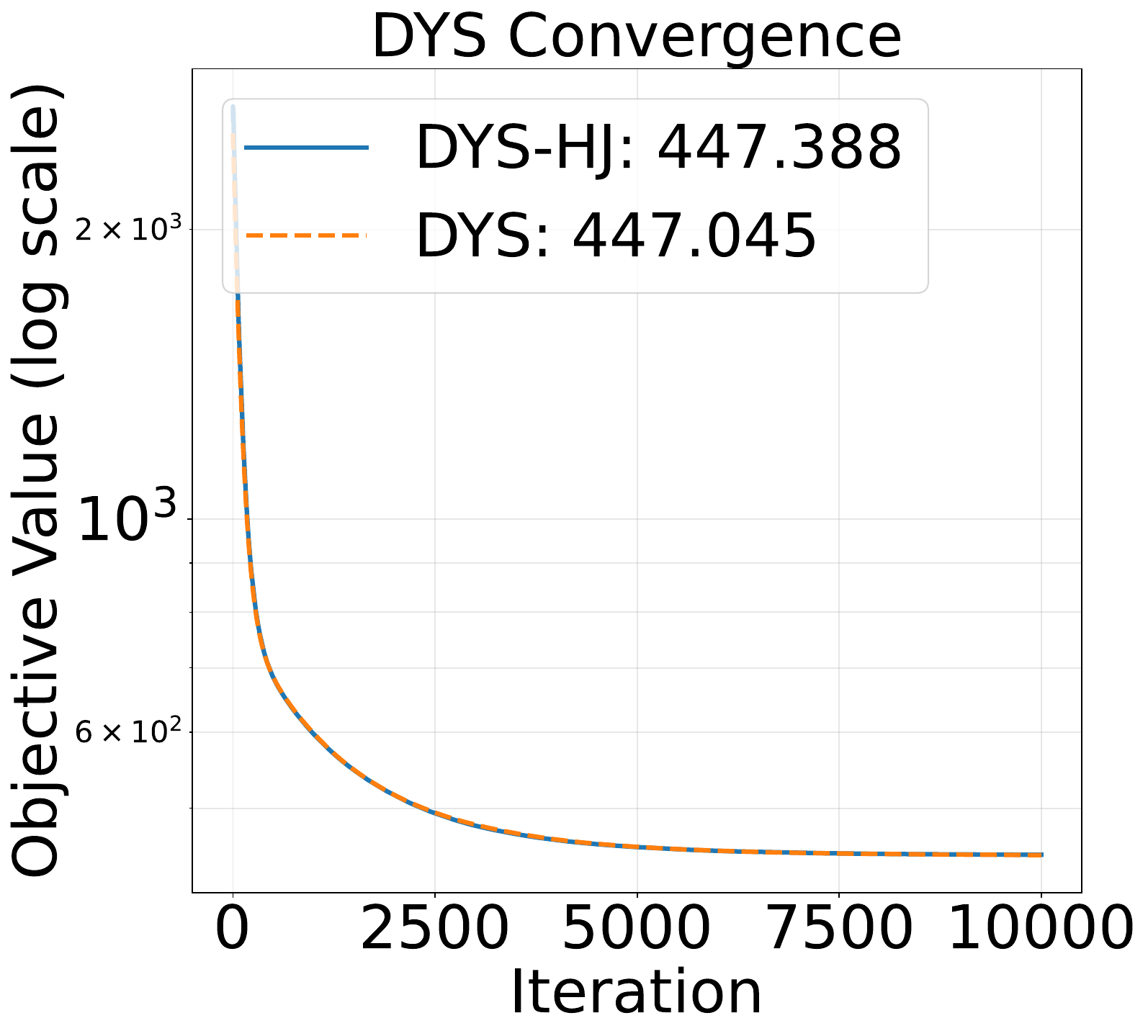} \\
   \multicolumn{4}{c}{Total Variation Denoising: $\underset{\beta}{\arg\min}\; \frac{1}{2} \| X \beta - y\|_{F}^{2} +\lambda \text{TV}(\beta)$}
        \\
        \includegraphics[width=.275\textwidth]{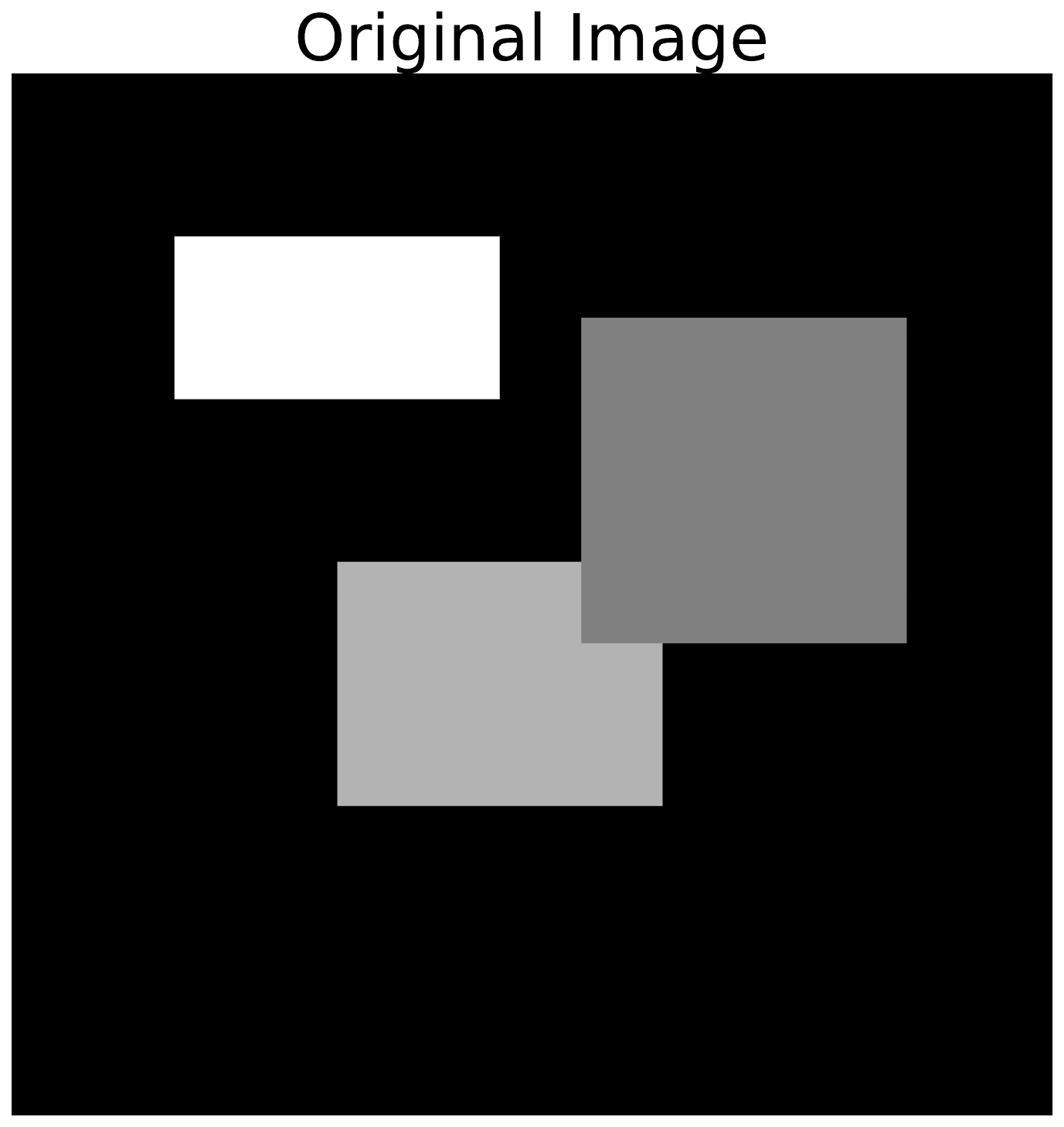}
        & 
        \includegraphics[width=.275\textwidth]{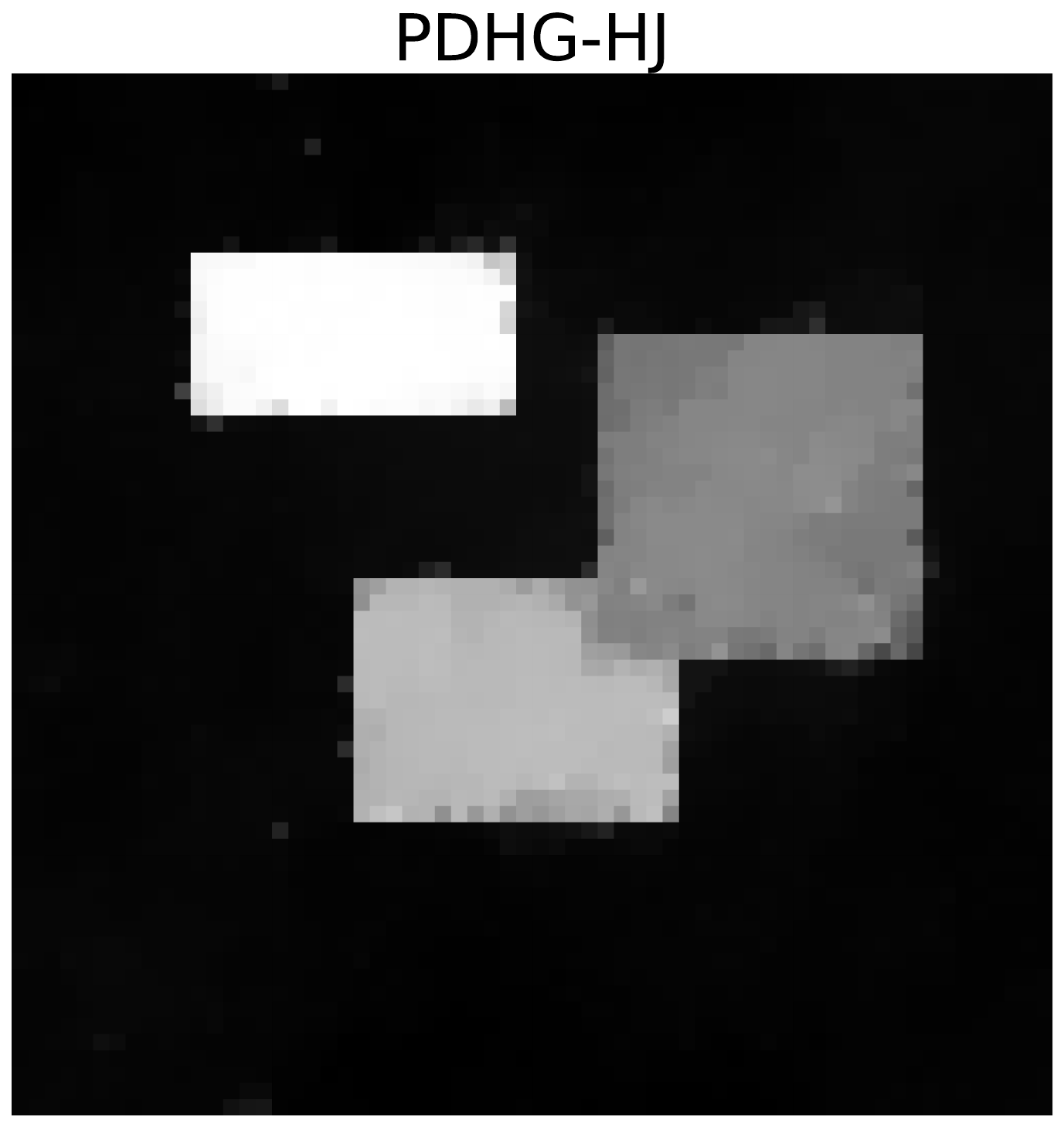}
        &
        \includegraphics[width=.275\textwidth]{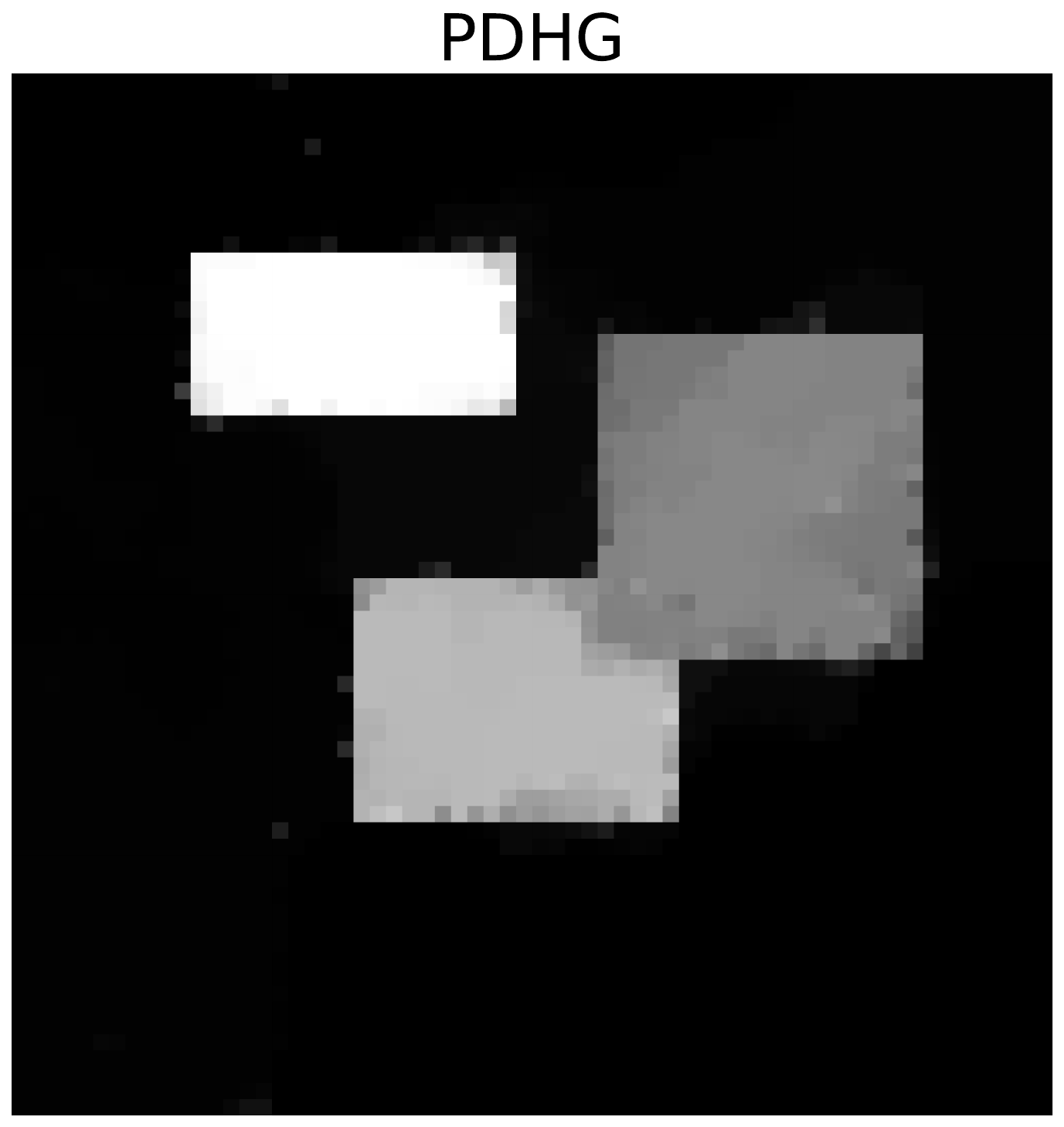}
        &
        \includegraphics[width=.275\textwidth]{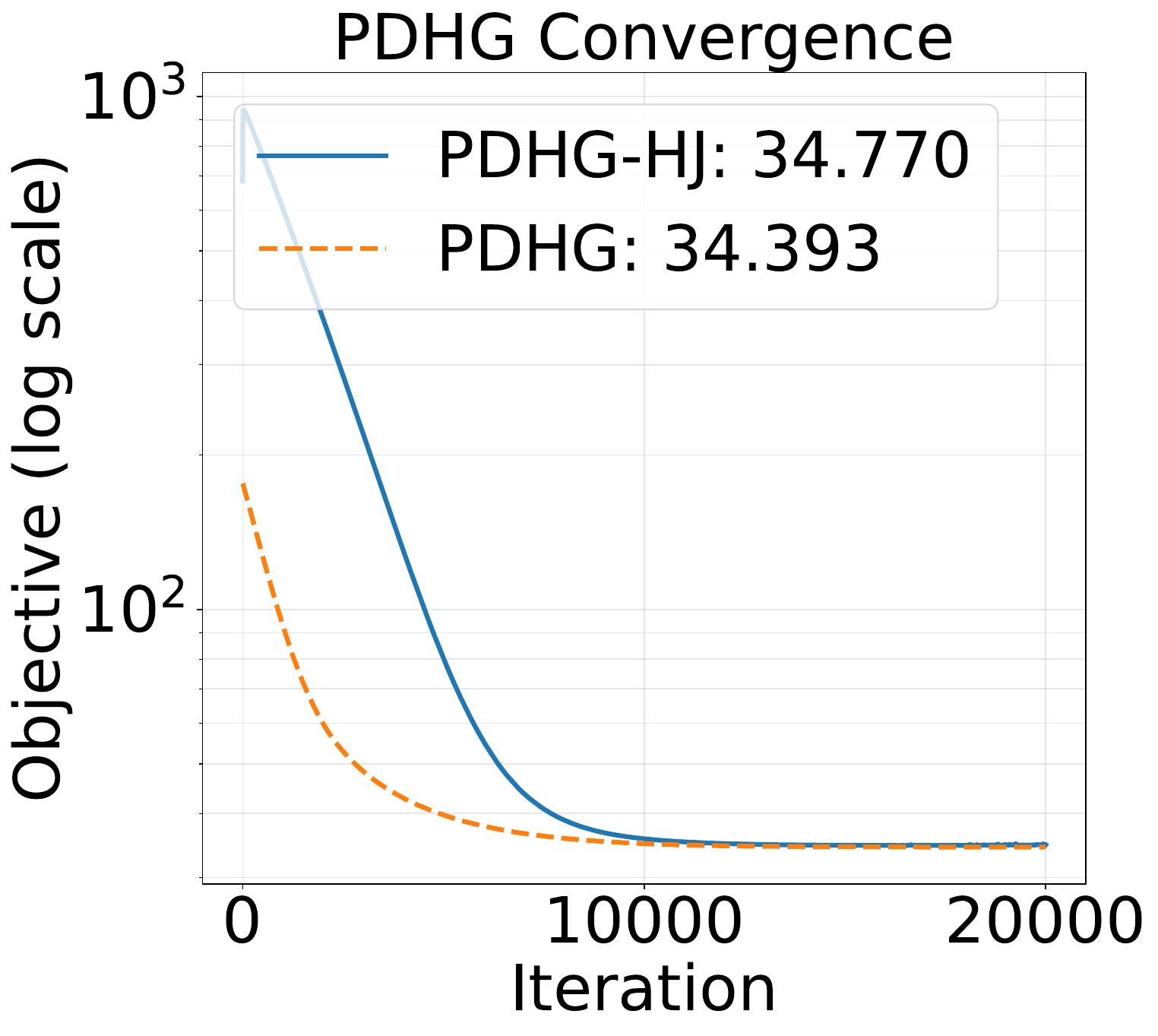}
  \end{tabular}%
  }
  \caption{Top row: LASSO experiment solved using PGD. HJ-Prox is called on the $\ell_1$ regularizer. Middle row: Sparse Group LASSO using DYS. HJ-Prox is called on the Group and $\ell_1$ regularizer. Bottom row: Total Variation Denoising solved with PDHG. HJ-Prox is called on the Total Variation penalty. All examples feature closed-form projection-based proximal operators. While final reconstructions are visually identical, the convergence rates differ, with the more difficult TV objective requiring increased iterations for the HJ-Prox variant.}\label{fig: LASSO, SGLASSO, TV}
\end{figure*}

\section{Experiments}

We evaluate HJ-Prox as a drop-in replacement for proximal operators within standard splitting methods PGD, DRS, DYS, and PDHG. We consider seven convex nonsmooth optimization problems that span settings with closed-form proximal operators, problems that require specialized numerical routines, and hybrid configurations. 
For fair comparison, all HJ-Prox and exact-prox baselines use identical objective functions, step sizes, and algorithmic parameters where applicable. HJ-Prox methods use a decreasing $\delta_k$ schedule (guided by Assumption~\ref{assump: fixed t}) and a fixed Monte Carlo sample size $N=1000$ and $t$. While our convergence theory assumes growing sample sizes $N_k$ to ensure almost sure summability of approximation errors, we empirically observe robust convergence performance with fixed $N$. We compare recovered solutions against analytical based methods and report objective values versus iteration. The final objective is shown in each legend. 

Across all experiments, HJ-Prox recovers solutions visually indistinguishable from analytical baselines, validating that zeroth-order approximations do not compromise solution quality in standard splitting algorithms. Further experimental details are displayed in \ref{AP: Experiments}.

\subsection{Comparison on Problems with Closed-Form Proximal Steps}
We first validate HJ-Prox on problems with closed-form or simple projection-based proximal operators. We apply PGD to the LASSO problem, where the baseline proximal operator is standard soft-thresholding, and Davis–Yin splitting to non-overlapping Sparse Group LASSO, where exact updates involve closed-form groupwise and elementwise shrinkage. We further consider Total Variation (TV) denoising using PDHG, for which the exact proximal updates reduce to simple projections onto box constraints. 

For LASSO and sparse group LASSO, we perform effective variable selection with both approaches, shrinking true zero coefficients toward zero. We observe that computational cost varies by problem structure. Since the HJ-Prox error depends on the parameter $t$ as derived in Theorem ~\ref{thm:hj_prox_error_bound_mc}, smaller step sizes are often required to minimize error and maintain high precision. Consequently, this calls for a higher number of iterations to reach convergence. This trade-off is especially obvious in the Total Variation denoising problem, which typically requires significantly more iterations to match the analytical baseline compared to simpler tasks like LASSO. HJ-Prox precisely recovers the solutions obtained via exact closed-form proximal operators, demonstrating that the zeroth-order approximation achieves near same accuracy as analytical methods. Results for these experiments are shown in Figure~\ref{fig: LASSO, SGLASSO, TV}.

\begin{figure*}[t]
    \centering
    \resizebox{\textwidth}{!}{%
    \begin{tabular}{@{}cccc@{}} 
        \multicolumn{4}{c}{Trend Filtering: $\underset{\beta}{\arg\min}\ \frac{1}{2}\|\beta - y\|^2 + \lambda\|D\beta\|_1$}
        \\
        \includegraphics[width=.275\textwidth]{Figures_Experiments/fused_LASSO_ground_truth.pdf}
        & 
        \includegraphics[width=.275\textwidth]{Figures_Experiments/fused_LASSO_drs_hj.pdf}
        &
        \includegraphics[width=.275\textwidth]{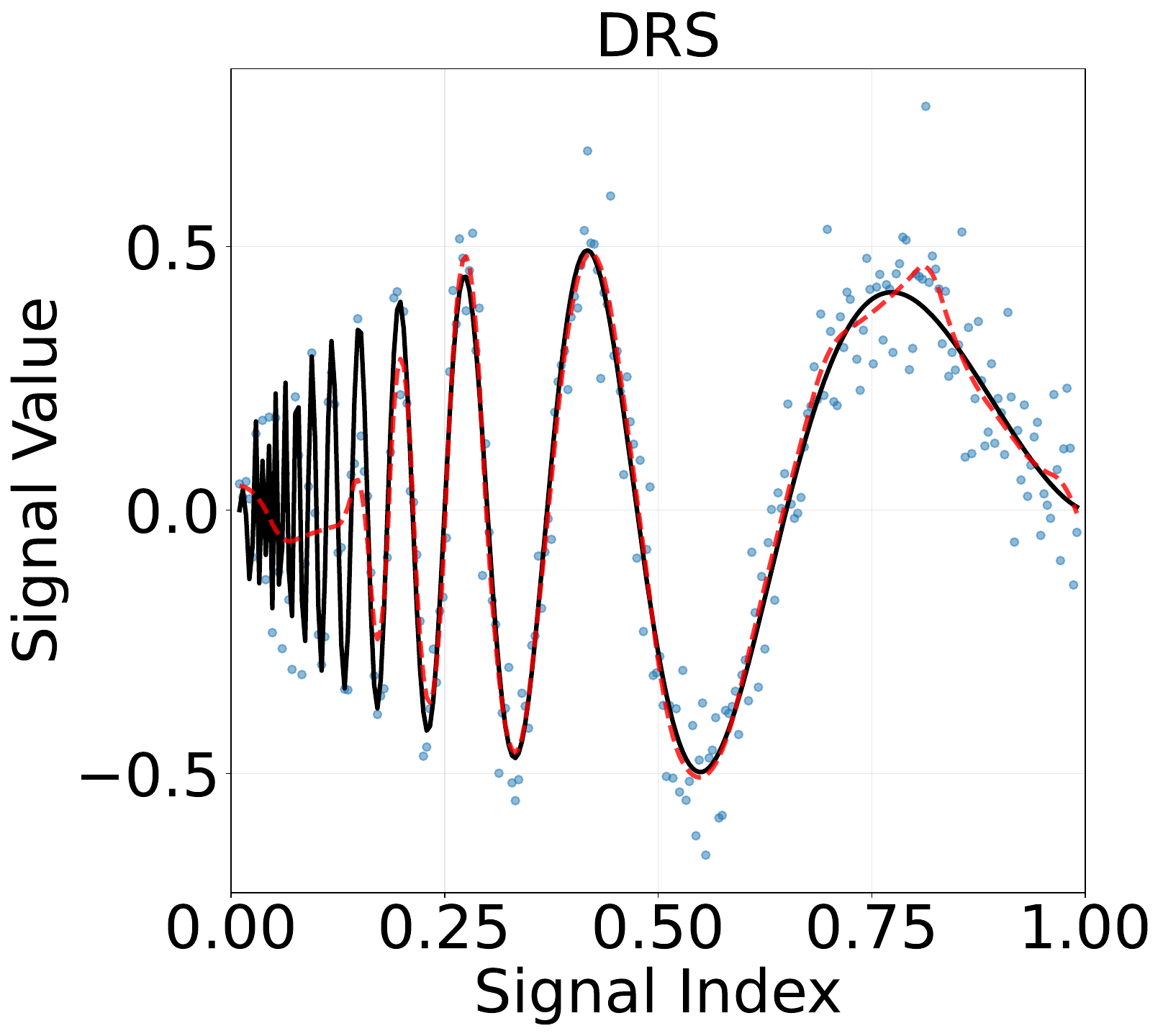}
        &
        \includegraphics[width=.275\textwidth]{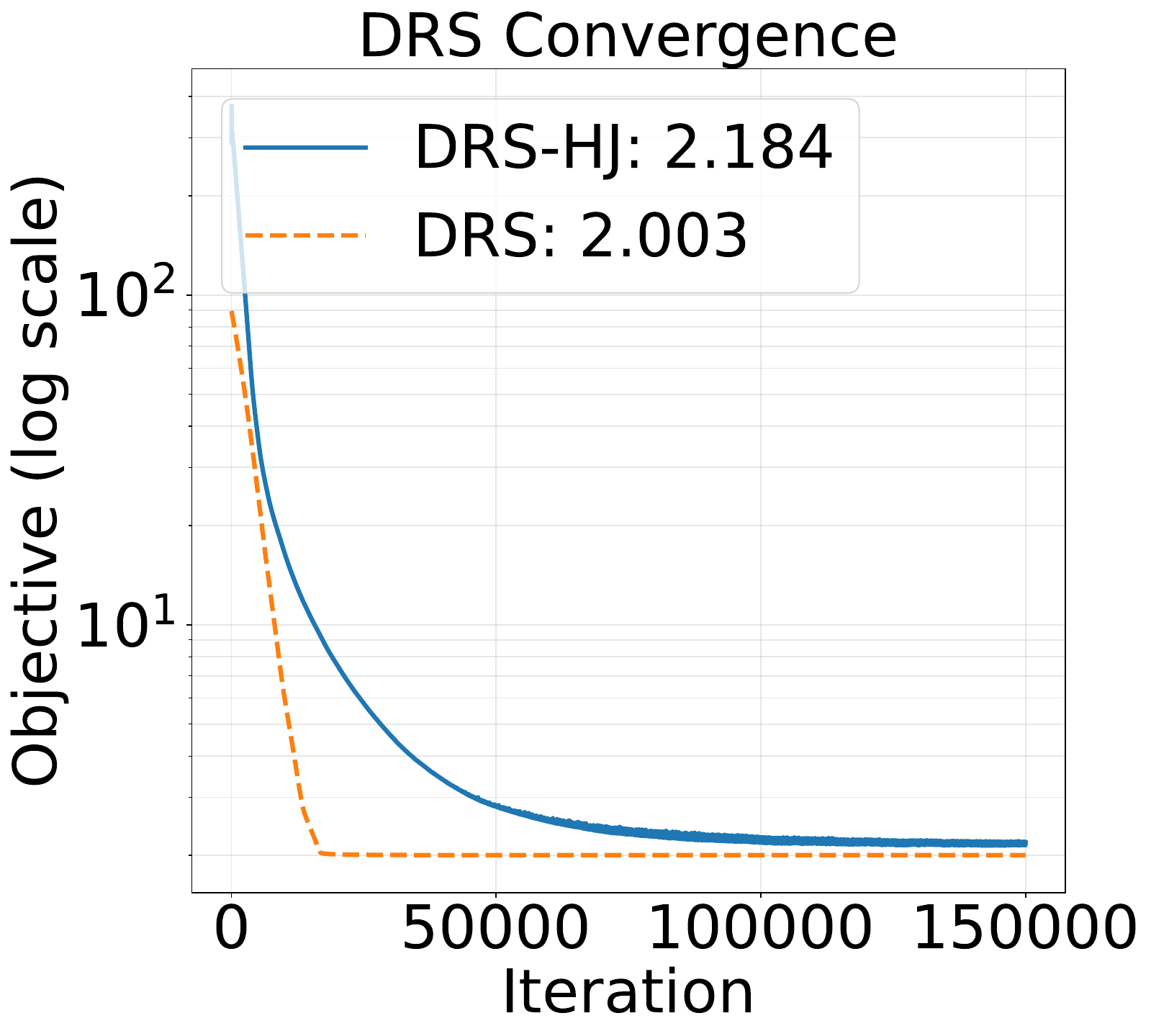} 
        \\
        \multicolumn{4}{c}{Multitask Learning: $\underset{B}{\arg\min}\; \frac{1}{2}\,\| X B-Y\|_{F}^{2} +\lambda_{1}\,\| B\|_{*} +\lambda_{2}\sum_{i}\| b_{i,\cdot}\|_{2}+\lambda_{3}\sum_{j}\|b_{\cdot,j}\|_{2}$}
        \\
        \includegraphics[width=.275\textwidth]{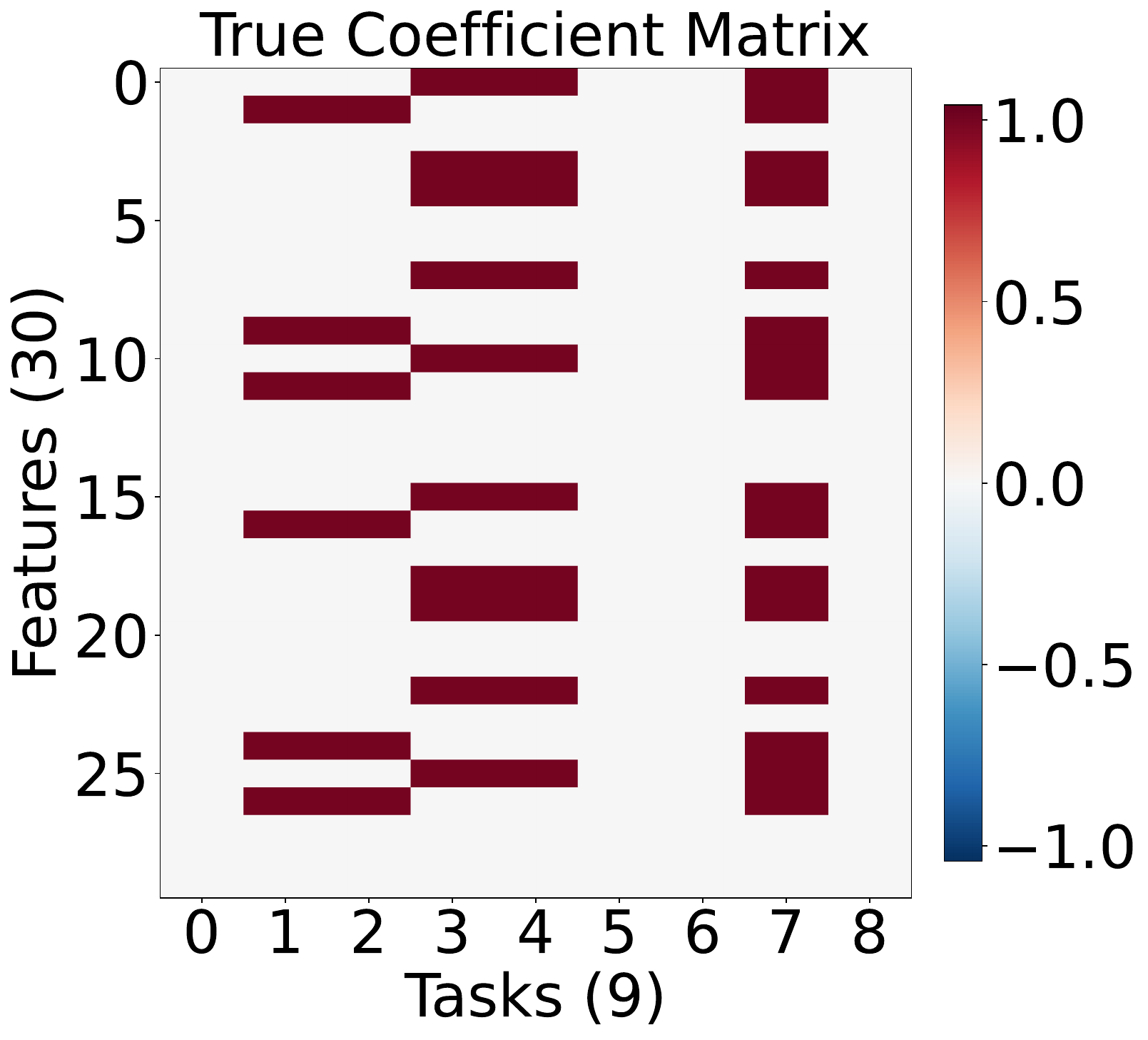}
        & 
        \includegraphics[width=.275\textwidth]{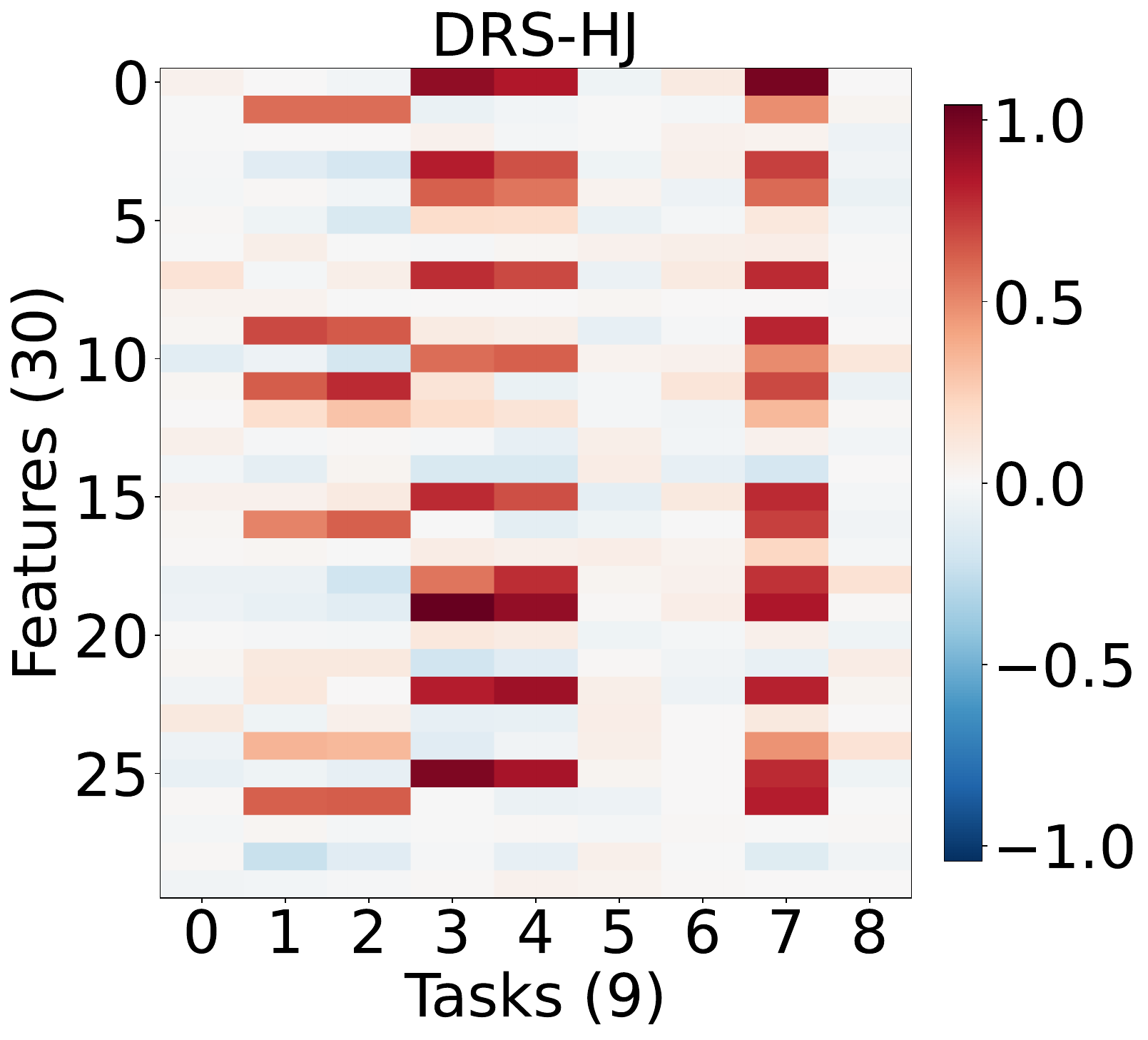}
        &
        \includegraphics[width=.275\textwidth]{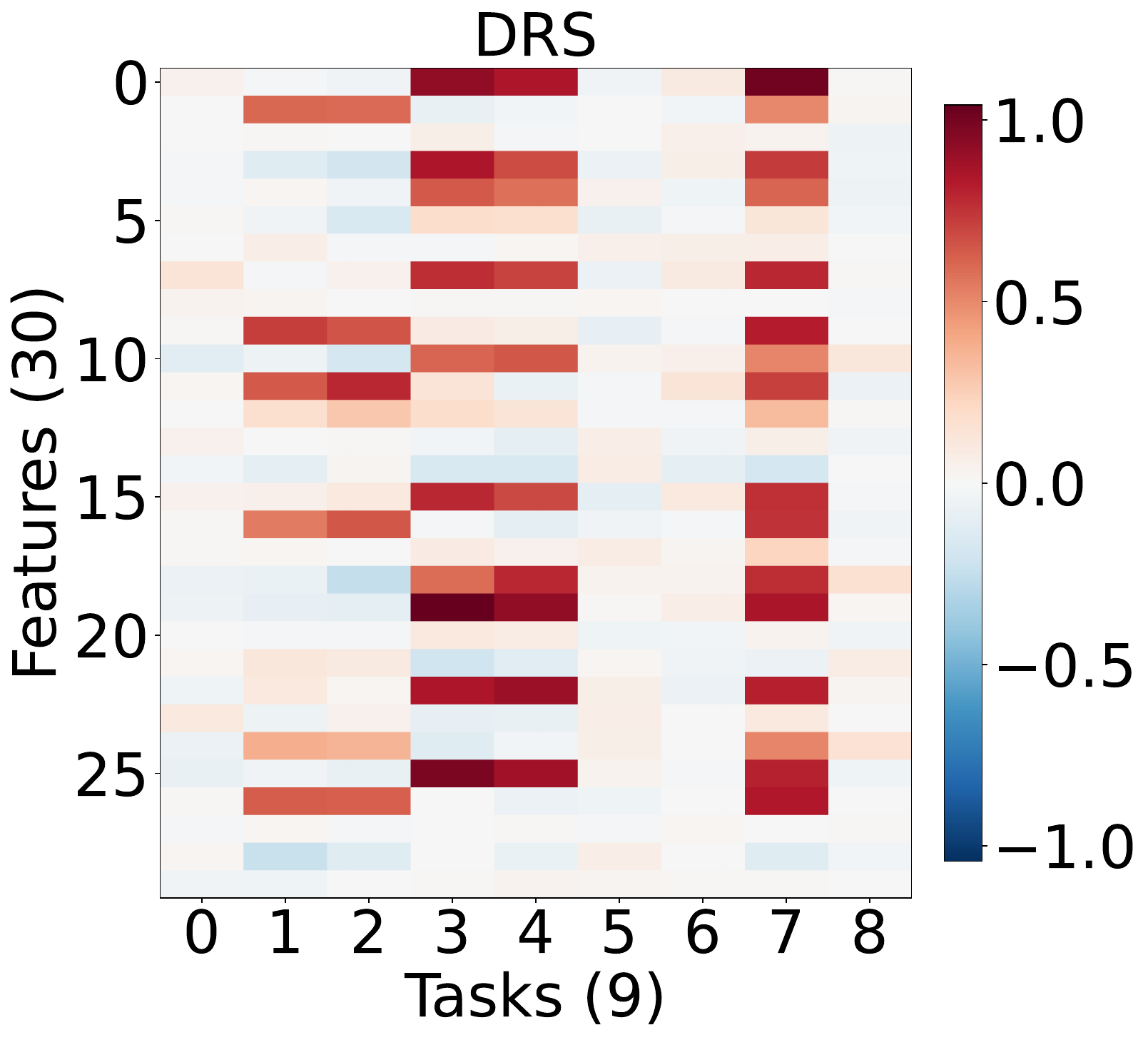}
        &
        \includegraphics[width=.275\textwidth]{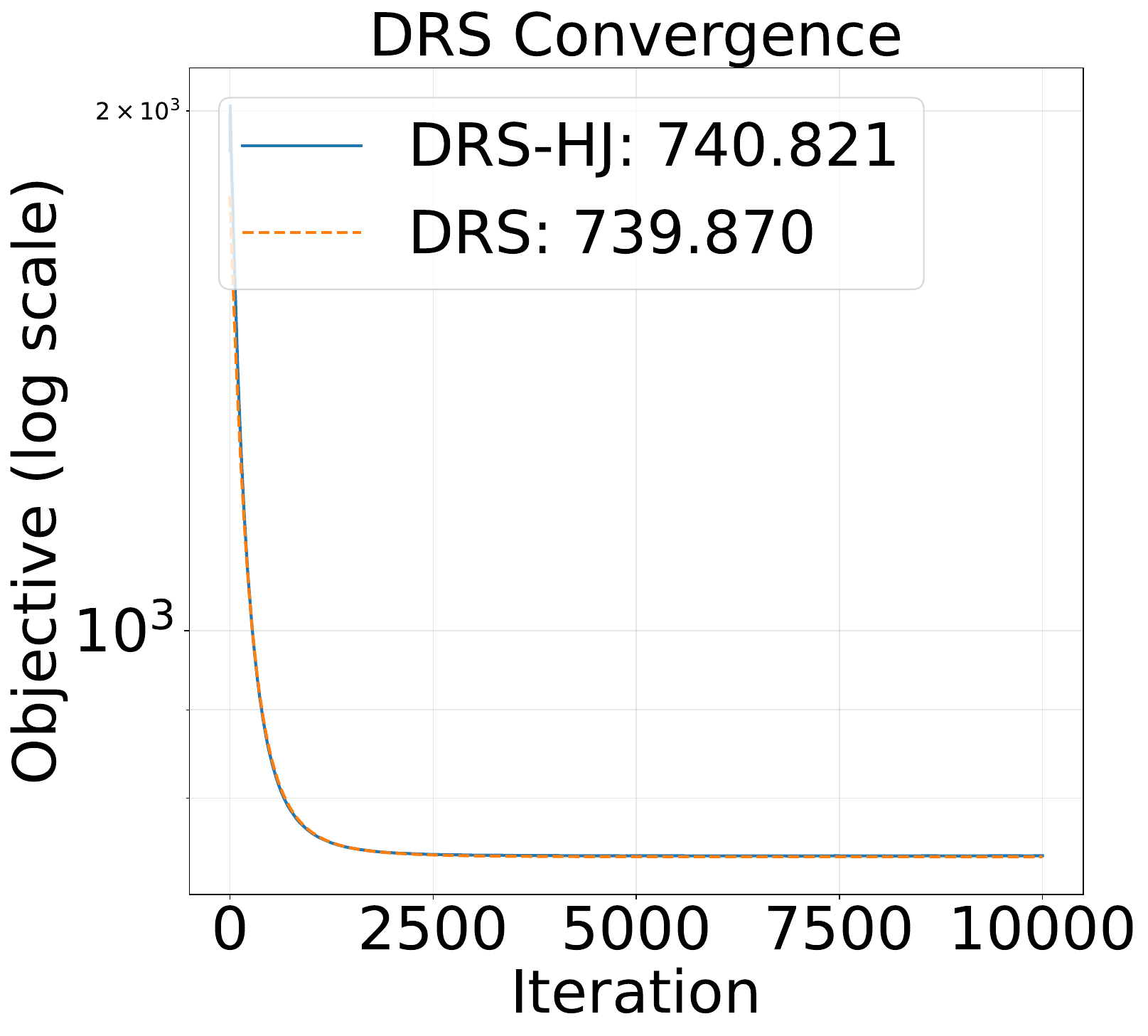}
    \end{tabular}%
    }
    \caption{Top row: Trend Filtering experiment solved using DRS. HJ-Prox is called on the differencing matrix. Bottom row: Multitask Learning using DRS. HJ-Prox is called on the clumped data fidelity term and onto the row and column penalties. Results confirm that HJ-Prox matches the convergence of complex baseline solvers, both of which require inner loops or problem reformulations.}
    \label{fig: Trend Filtering and Multitask Learning}
\end{figure*}

\begin{figure*}[t]
\label{fig: LASSO and MLR}
  \centering
  \resizebox{\textwidth}{!}{%
  \begin{tabular}{@{}cccc@{}}
    \multicolumn{4}{c}{LASSO: $\underset{\beta}{\arg\min}\ \frac{1}{2}\|X\beta - y\|_2^2 + \lambda\|\beta\|_1$}\\
    \includegraphics[width=0.275\textwidth]{Figures_Experiments/LASSO_ground_truth.pdf} &
    \includegraphics[width=0.275\textwidth]{Figures_Experiments/LASSO_hjprox.pdf} &
    \includegraphics[width=0.275\textwidth]{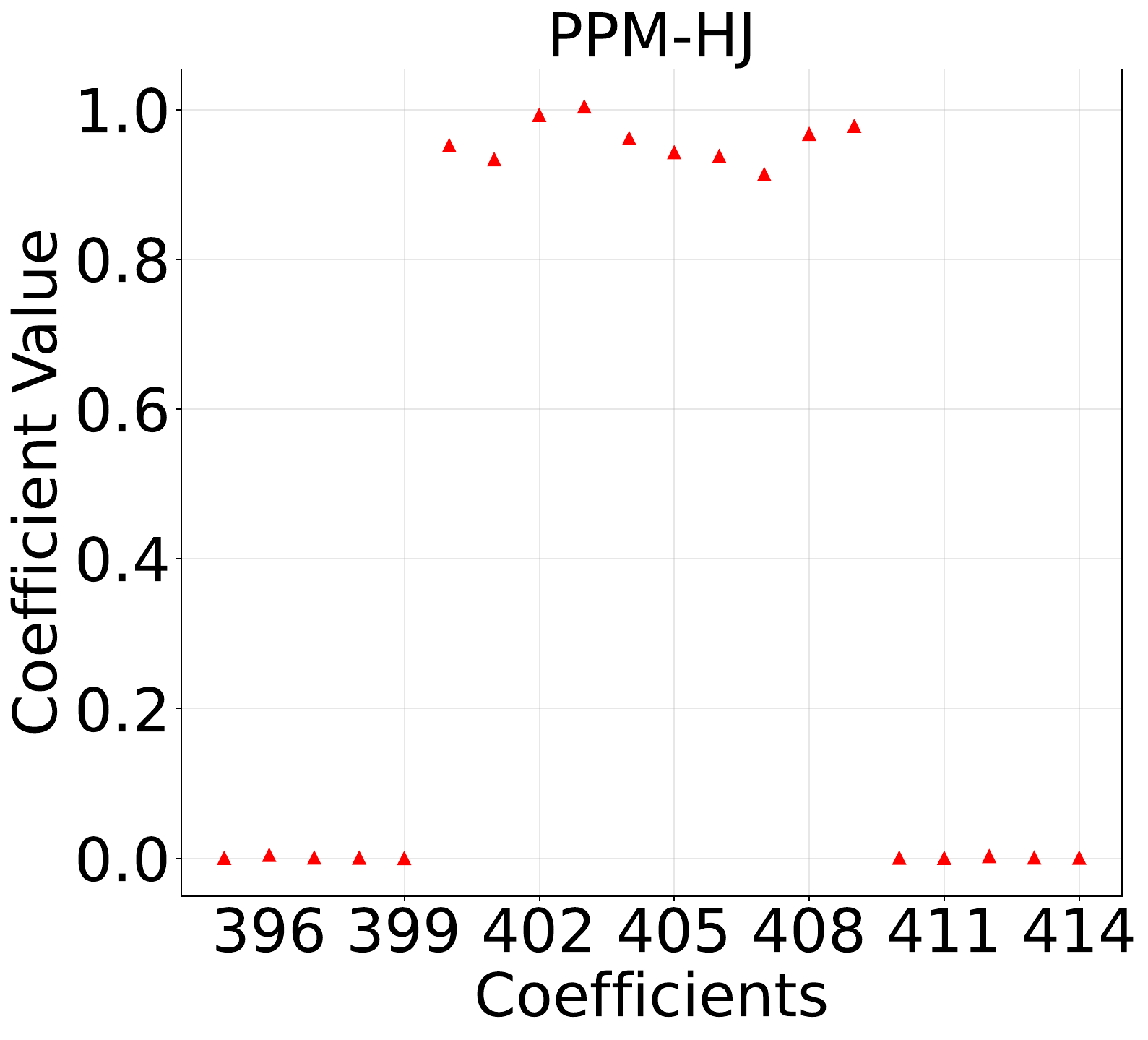} &
    \includegraphics[width=0.275\textwidth]{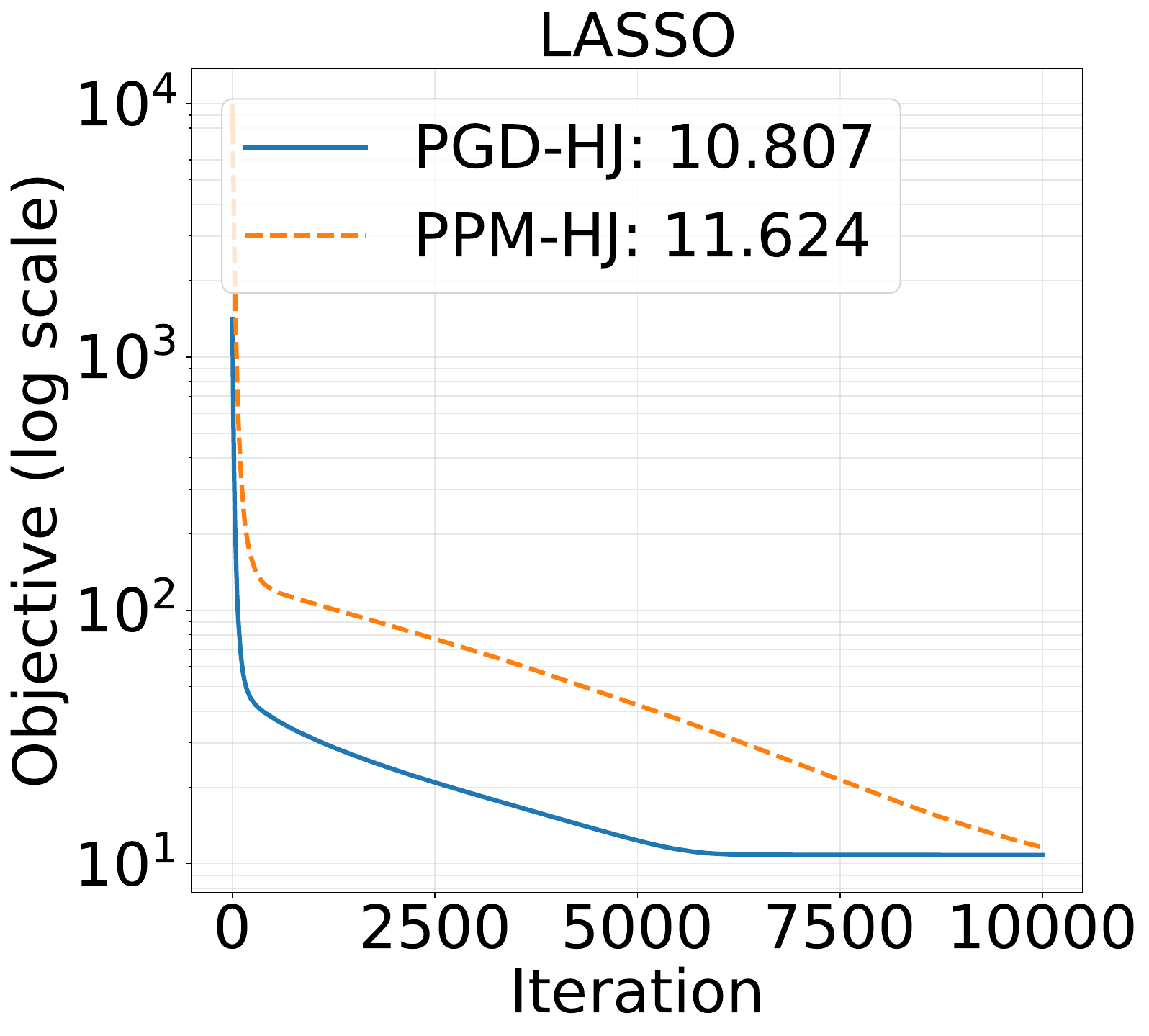} \\
    \multicolumn{4}{c}{Trend Filtering: $\underset{\beta}{\arg\min}\ \frac{1}{2}\|\beta - y\|^2 + \lambda\|D\beta\|_1$}\\
    \includegraphics[width=.275\textwidth]{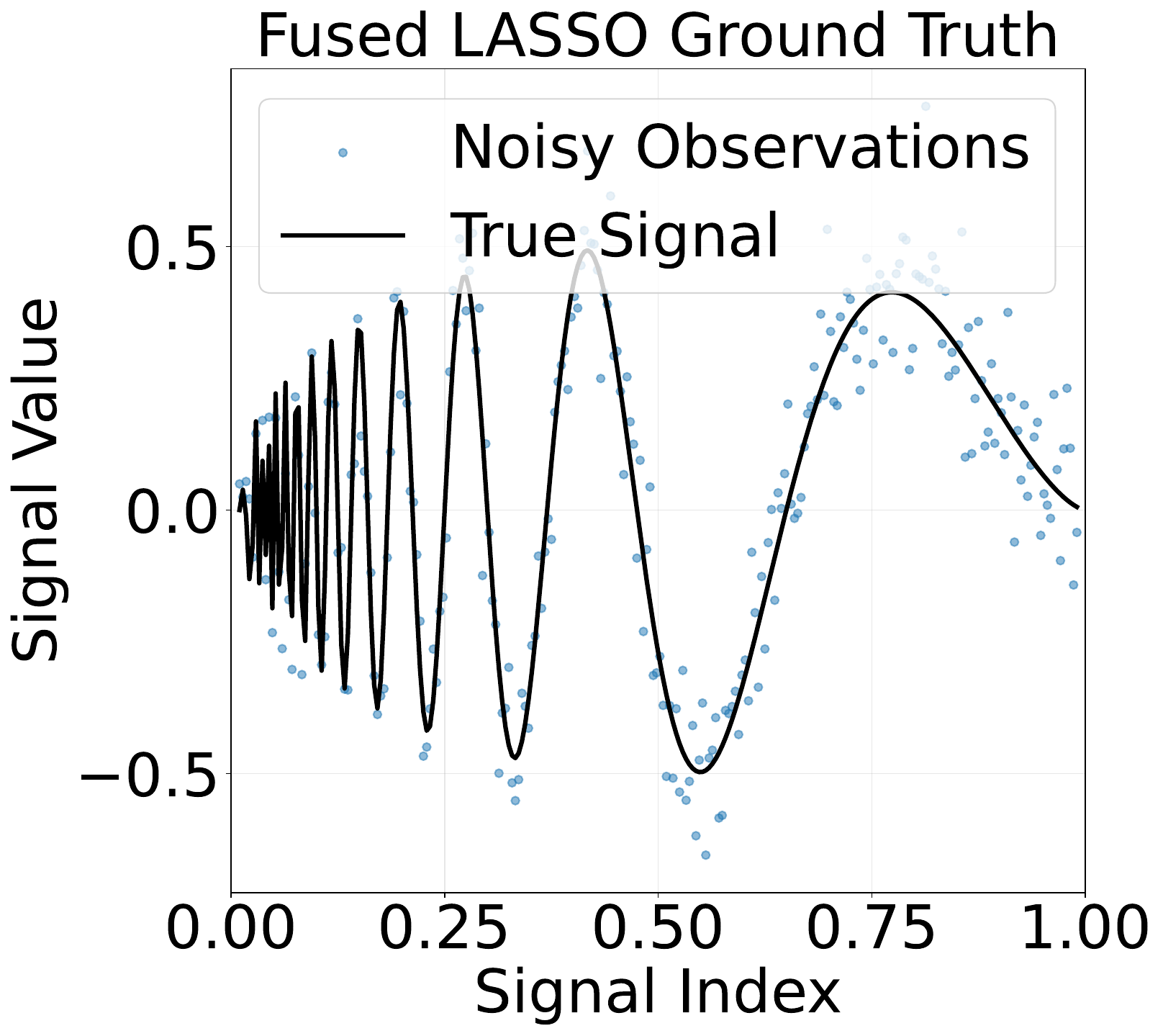} &
    \includegraphics[width=.275\textwidth]{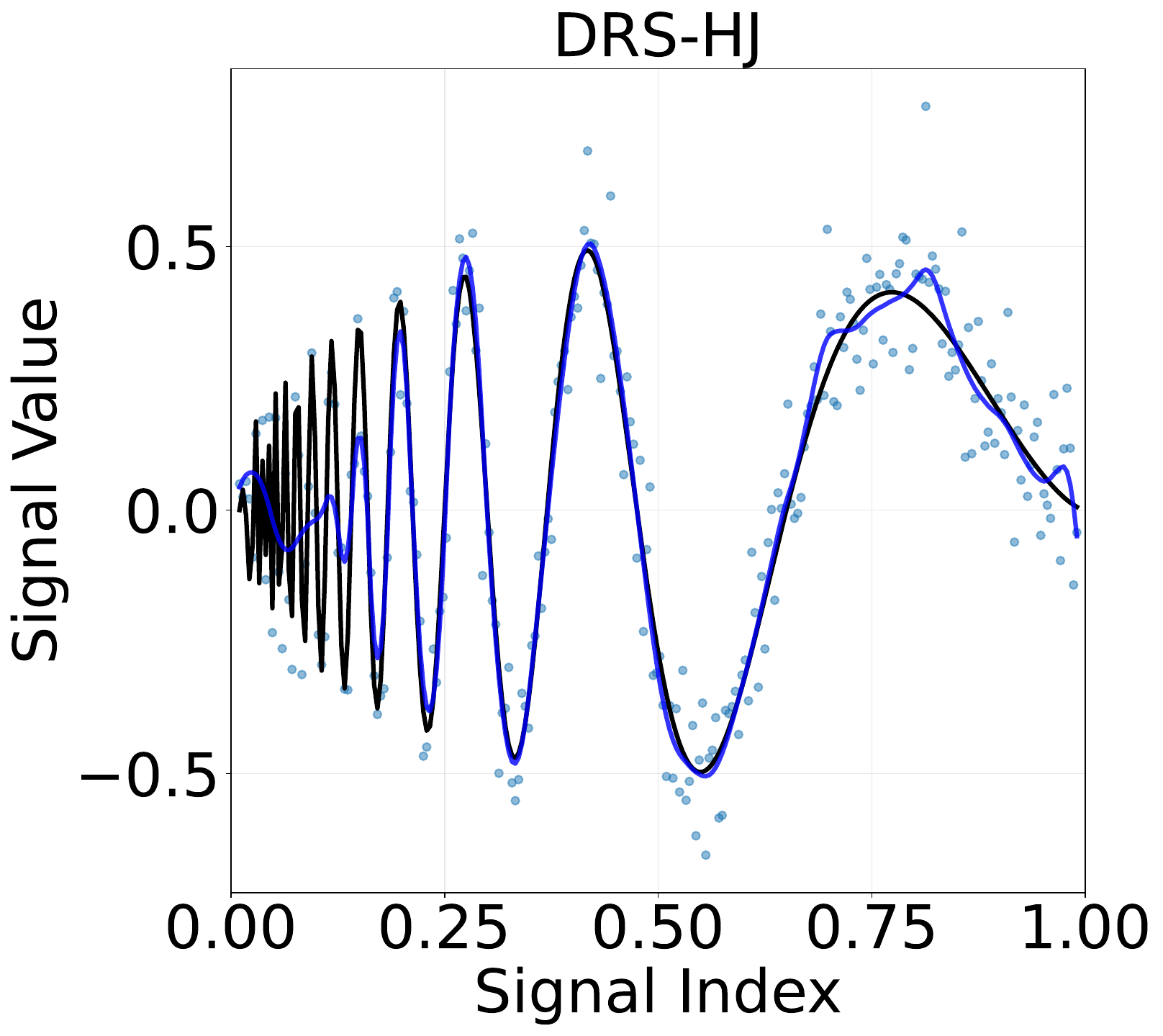} &
    \includegraphics[width=.275\textwidth]{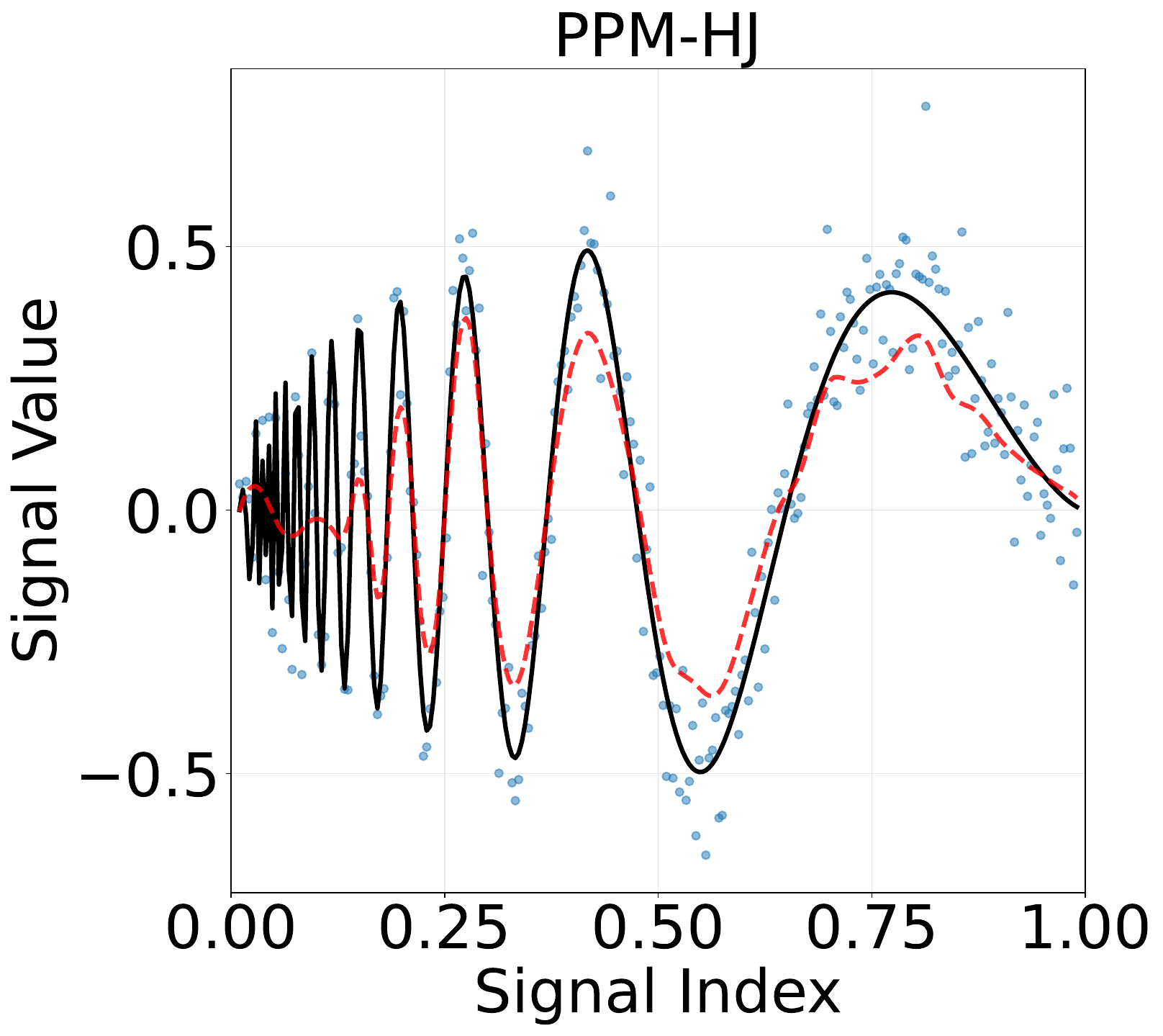} &
    \includegraphics[width=.275\textwidth]{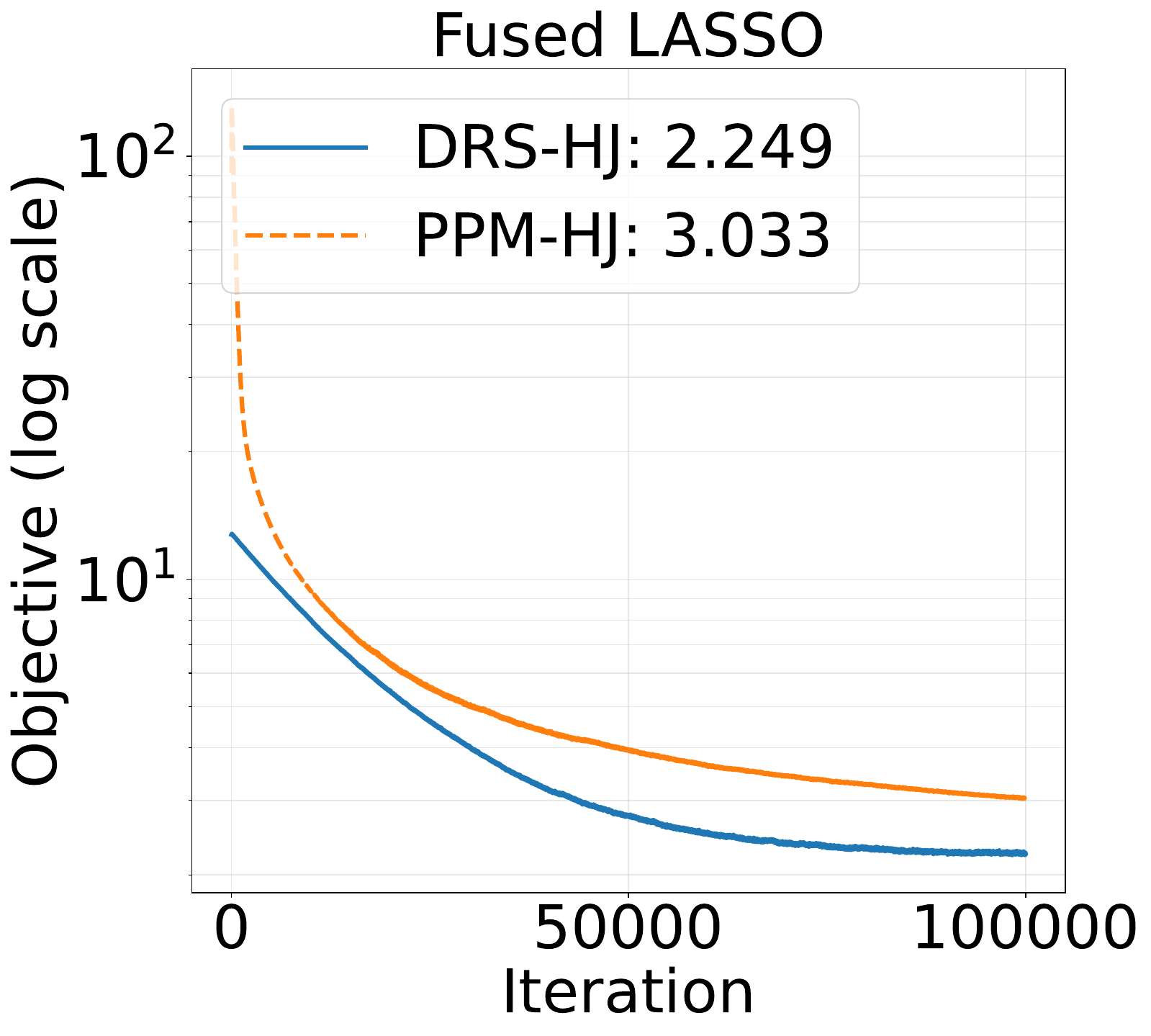}
  \end{tabular}%
  }
  \caption{Top row: HJ-PPM struggles to match convergence of HJ-PGD. Bottom row: HJ-PPM struggles to converge to similar solutions as opposed to using splitting methods. These experiments highlight that HJ-PPM is computationally less efficient and lacks the robust error control of the alternative methods.}\label{fig: PPM VS (PGD,DRS)}
\end{figure*}

\subsection{Comparison on Problems Without Closed-Form Proximals}
Next, we consider problems where proximal operators are well defined but require specialized or computationally intensive routines to evaluate. For multitask learning, we apply DRS to a structured matrix regularizer. The analytical baseline requires iterative \emph{full singular value decompositions (SVD)} to reconstruct the matrix, as well as Dykstra’s algorithm to handle mixed-norm penalties. For trend filtering, the exact method uses a product-space reformulation that requires solving dense linear systems via Cholesky factorization at every iteration.

In both cases, HJ-Prox serves as a function-based drop-in replacement that avoids specialized inner solvers, significantly simplifying implementation. In multitask learning, the HJ-Prox iterates closely match the analytical updates. Crucially, while HJ-Prox requires singular values for evaluation, it circumvents computing a full SVD. For trend filtering, HJ-Prox requires a larger number of iterations to converge, consistent with the increased difficulty of the proximal operator associated with the regularization term. Importantly, HJ-Prox enables us to apply splitting algorithms directly to the original objective function, completely eliminating the need for current complex reformulations, such as lifting the problem into a product space or designing specialized iterative subroutines, thereby highlighting a streamlined universal approach to nonsmooth optimization. Results for these experiments are shown in Figure \ref{fig: Trend Filtering and Multitask Learning}. 

\subsection{Splitting vs Non-splitting}
\label{subsec:split_vs_nonsplit}

To demonstrate the necessity of operator splitting, we first compare splitting schemes (PGD, DRS) against the non-splitting Proximal Point Method (PPM). PGD and DRS, decompose the problem by applying individual proximal or gradient steps to $f$ and $g$ separately within each iteration. In contrast, the non-splitting HJ-PPM attempts to compute the HJ-Prox of the combined function $f+g$ directly. All methods use identical parameters for a fair comparison: $t$, $\delta$, and $N$ as seen in Figure \ref{fig: PPM VS (PGD,DRS)}. A notable methodological difference is that PPM does not admit a clear, analogous step-size parameter for balancing the two objectives, as it treats them as a single entity. To ensure a competitive baseline, we manually tuned the PPM step size for each experiment.

As shown in Figure~\ref{fig: PPM VS (PGD,DRS)}, HJ-PPM consistently underperforms splitting-based methods. It exhibits slower convergence rates and converges to higher final objective values across experiments. This performance gap stems from a key disadvantage highlighted in Section~\ref{subsec:why_splitting}. The results establish that a composite, non-splitting approach is computationally less efficient and fails to match the benefits of operator splitting.

Moreover, a key practical advantage unlocked by operator splitting is the ability to deploy \emph{hybrid strategies} in our framework. This flexibility is crucial for problems where the composite objective comprises terms of mixed tractability. For instance, one term (like an $\ell_1$-regularizer) often admits a cheap, closed-form proximal operator. Splitting frameworks enable us to apply \emph{this exact proximal operator for the proximable term}, while restricting the HJ-Prox approximation exclusively to the truly non-proximable (i.e., terms without closed-form proximals) component and eliminating approximation error for the analytical steps. Consequently, the overall algorithmic error is no longer an accumulation of errors from multiple approximate operations but is determined by the error introduced in approximating the non-proximal portion with HJ-Prox. This leads to more accurate convergence compared to a fully approximate splitting method. We concretely illustrate this performance hierarchy using a non-negative LASSO problem in Figure \ref{fig: NNLasso}. We compare four configurations under identical algorithmic parameters. First, by bringing the constraint to the objective function as an indicator function, the fully analytical DYS method serves as the gold standard, which employs exact proximal operators for both  $\ell_1$ and indicator function. Second, a hybrid variant (DYS-HJ-1) applies the HJ-Prox approximation \emph{only to} $\ell_1$. Third, a fully approximate method (DYS-HJ-2) \emph{replaces both proximal operators} with HJ-Prox approximations. Finally, the non-splitting HJ-PPM acts as the composite baseline, applying HJ-Prox directly to the full objective.

To provide a fair performance comparison, we use the fixed-point residual of the fully analytic DYS algorithm, which serves as the correct KKT conditions for this problem. The results reveal that performance degrades progressively as more exact proximal operators are replaced with approximations. The hybrid method (DYS-HJ-1) significantly outperforms the fully approximate one (DYS-HJ-2), which demonstrates the value of preserving exact computations where possible. Crucially, the non-splitting HJ-PPM performs worst as it achieves the highest objective value and more importantly, the largest fixed-point residual. 

\begin{figure*}[t!]
\label{fig: NNLasso}
  \centering
  \begin{tabular}{@{}cc@{}}
    \multicolumn{2}{c}{Non-Negative LASSO: $\underset{\beta}{\arg\min}\ \frac{1}{2}\|X\beta - y\|_2^2 + \lambda\|\beta\|_1$ s.t $\beta \geq0$}
    \\
    \includegraphics[width=0.43\textwidth]{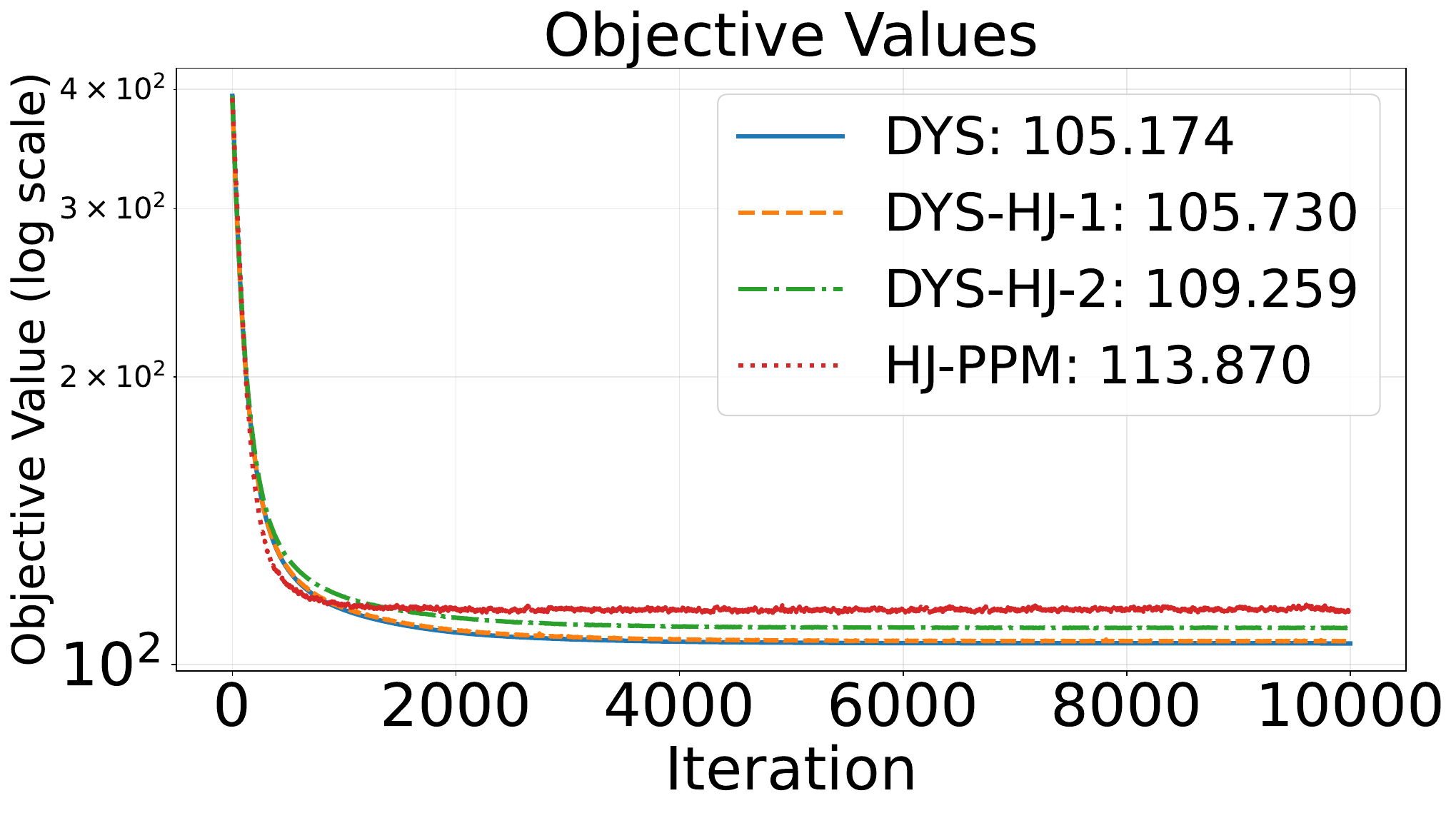} &
    \includegraphics[width=0.43\textwidth]{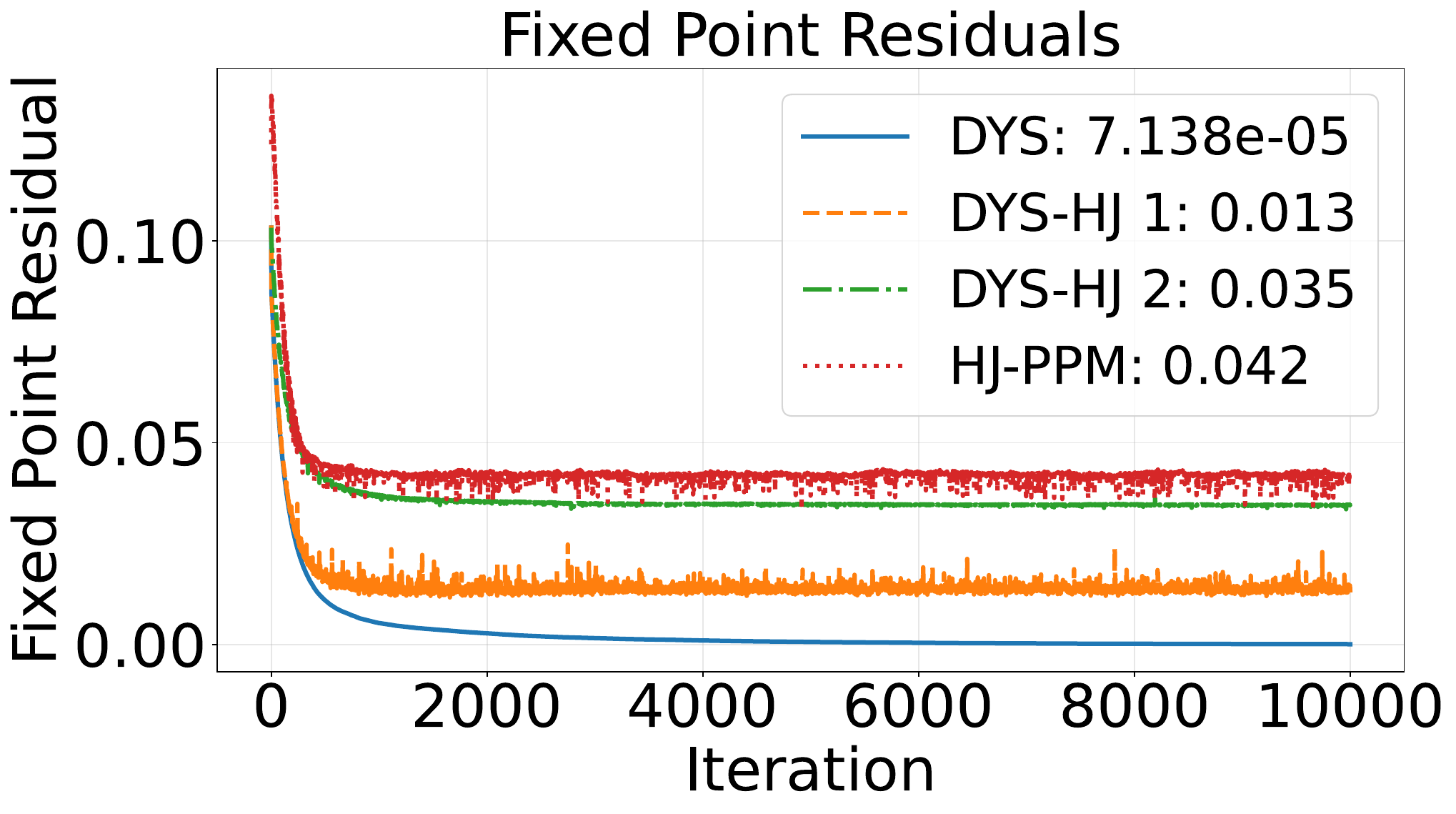} 
  \end{tabular}%
  \caption{\small{Under a fixed $\delta$, pure PPM decreases the objective rapidly but is limited to a solution neighborhood with high error floors. The results further demonstrate that a single call of HJ-Prox outperforms two calls, suggesting a hybrid framework can be optimal when applicable. All fixed-point residuals are calculated using the analytical fixed-point operator to ensure fair comparison across methods.}}\label{fig: NNLasso}
\end{figure*}

\begin{figure*}[t]
\centering
\begin{tabular}{@{}ccc@{}}
\multicolumn{3}{c}{Overlapping Group LASSO:
$\underset{\beta}{\arg\min}\; \frac{1}{2}\|X\beta-y\|_2^2
+\lambda_1\sum_{g=1}^Gw_g \|\beta_g\|_2 + \lambda_2\|\beta\|_1$}\\[1ex]

\begin{minipage}[c]{.3\textwidth}\centering
  \includegraphics[width=\linewidth]{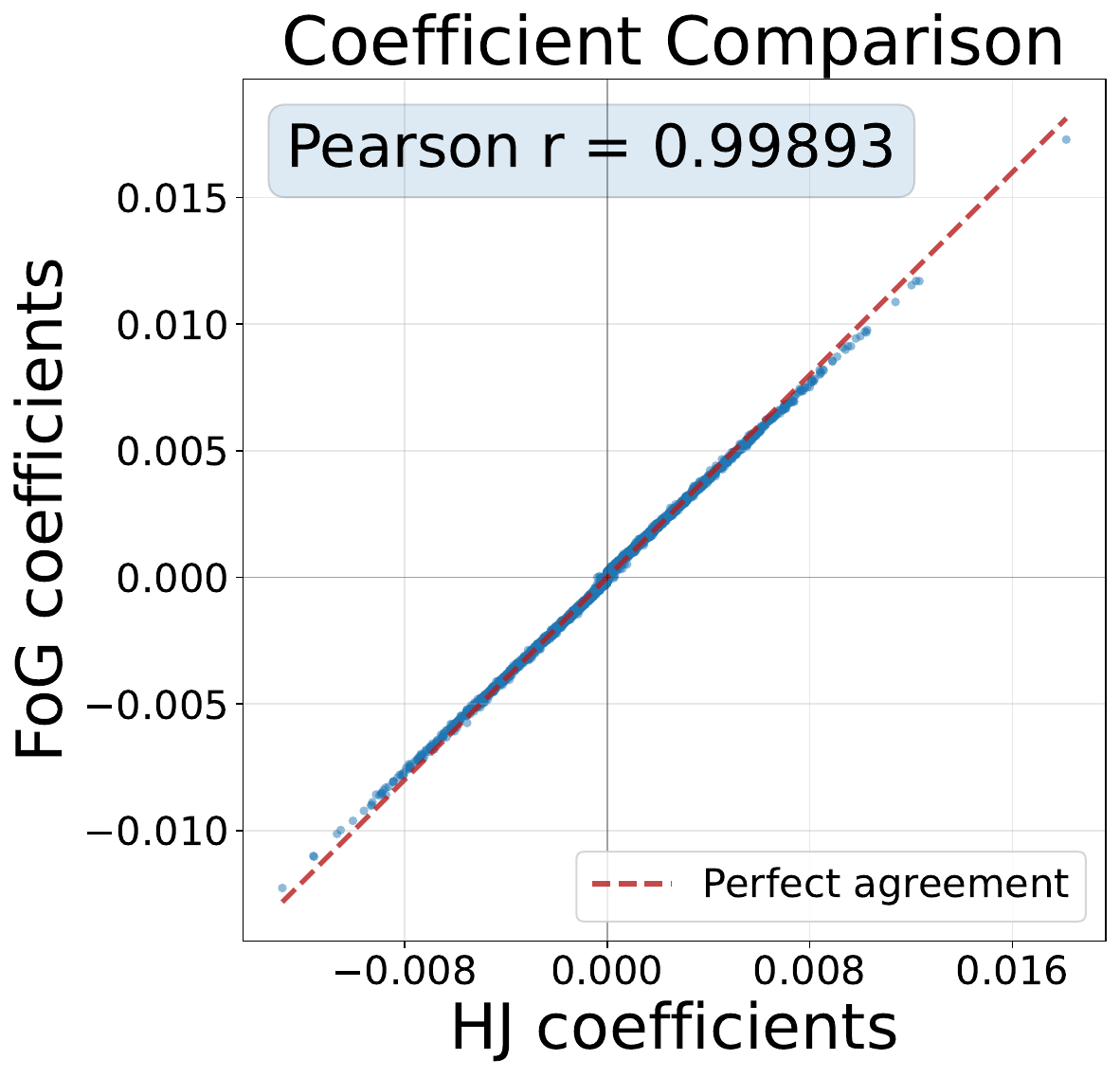}
\end{minipage}
&
\begin{minipage}[c]{.29\textwidth}\centering
\small
  \text{Top 3 Group Norms}
  \renewcommand{\arraystretch}{1.15}
  \setlength{\tabcolsep}{6pt}
  \begin{tabular}{lcc}
    \toprule
    FoGLasso  \\
    \midrule
    Chem. carcinogenesis & 1.864e-02 \\
    Viral-cytokine interaction & 1.732e-02 \\
    Hippo pathway & 1.647e-02  \\
    \midrule
    DYS-HJ\\
    \midrule
    Chem. carcinogenesis & 1.848e-02\\
    Viral-cytokine interaction&1.753e-02 \\
    Hippo pathway&1.627e-02\\
    \bottomrule
  \end{tabular}
\end{minipage}
&
\begin{minipage}[c]{.30\textwidth}\centering
  \includegraphics[width=\linewidth]{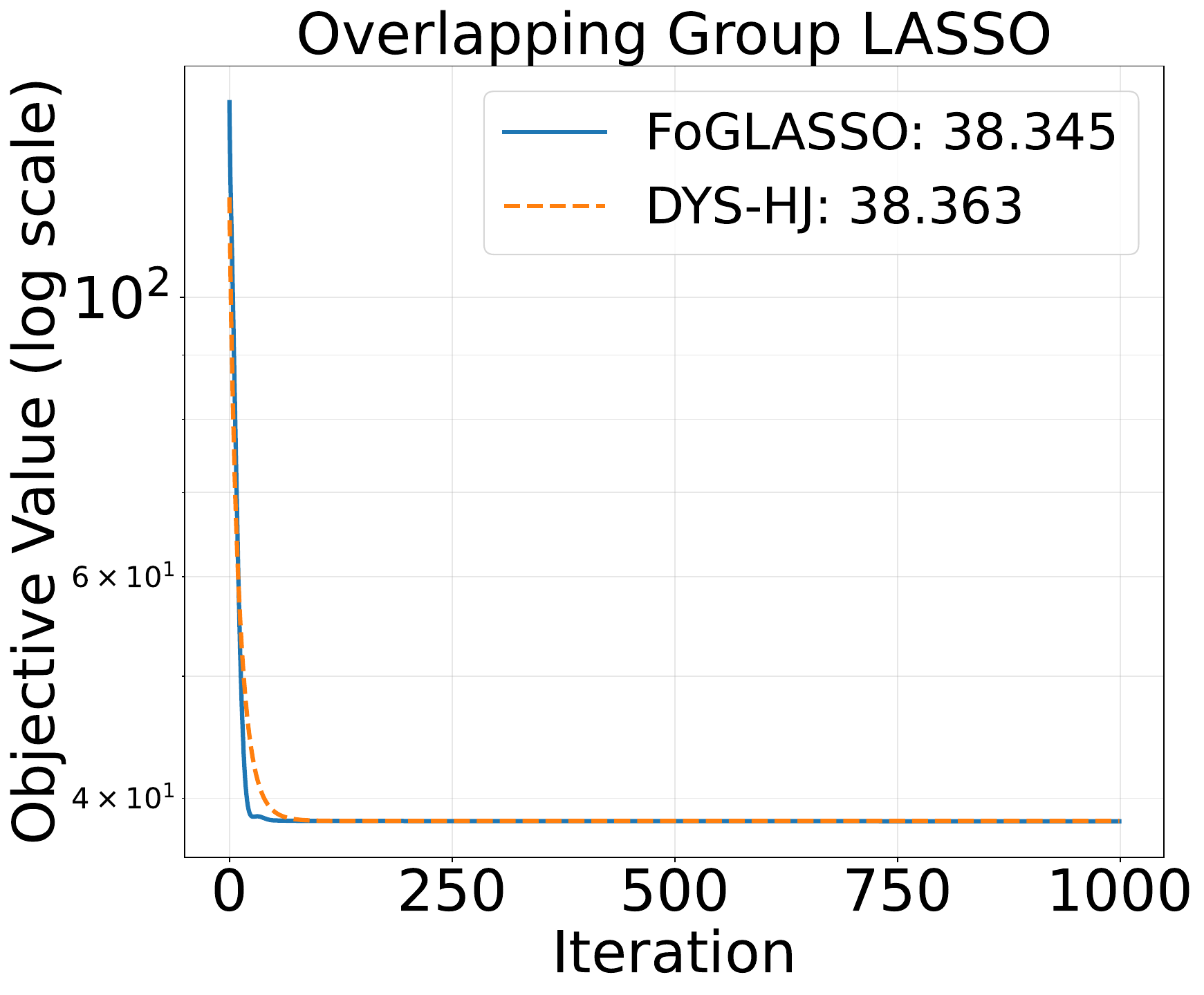}
\end{minipage}

\end{tabular}

\caption{Although similar in structure to the Group LASSO experiment in Figure \ref{fig: LASSO, SGLASSO, TV}, we allow the $\beta$ groupings to overlap here. We show convergence in log scale of the objective functions. We utilize an adaptive DYS here \cite{PedregosaGidel2018ICML}. DYS-HJ is run with a decreasing $\delta$-schedule and fixed $N=1000$. FoGLasso is implemented exactly as in \cite{YuanLiuYe2011}. We run for $100000$ iterations but zoom in to show the relevant iteration window for comparable convergence progress. The coefficient vector of length $13237$ has high correlation between FoGLASSO and DYS-HJ, and the group norms are numerically close together.}
\label{fig:oglasso_panels}
\end{figure*}

\subsection{Overlapping Group LASSO Gene Expression}

We evaluate our approach on the GSE2034 breast cancer gene expression dataset \cite{WangEtAl2005Lancet}, a standard benchmark for high-dimensional, low-sample-size prediction in genomics. We organize the features using KEGG pathways, yielding $298$ overlapping groups that cover  $41\%$ of the $13237$ measured genes.

We impose both an $\ell_1$ and overlapping group LASSO penalty to promote pathway-level sparsity while allowing genes to participate in multiple pathways. From an optimization perspective, unlike the standard group LASSO the resulting regularizer is not block separable, which renders the proximal operator \emph{unavailable in closed form}. Existing approaches, such as FoGLASSO \cite{YuanLiuYe2011}, address this difficulty through dual reformulations and specialized solvers. In contrast, HJ-Prox provides a direct approximation of the challenging proximal operator, enabling the use of standard operator splitting methods in the primal problem without variable duplication or dual reformulation, while attaining results comparable to FoGLASSO in Figure \ref{fig:oglasso_panels}.

\section{Limitations and Future Work}

A gap persists between the conservative sample complexity required by our theory and the practical efficiency observed in experiments. While our analysis rigorously accounts for Monte Carlo error to provide convergence guarantees, the empirical success of $\delta$ and $N$ suggests an opportunity for tighter theoretical bounds.

Currently, our framework uses predetermined schedules for the smoothing parameter $\delta$ and maintains a fixed sample size $N$ throughout optimization. While this approach is simple to implement and performs well empirically, adaptive strategies based on iteration-specific information could potentially improve convergence rates, particularly for challenging problem instances. Future work will focus on adaptive splitting algorithms that jointly optimize $N$ and $\delta$ based on iteration dynamics, potentially integrating these methods within a Learning-to-Optimize approach~\cite{chen2022learning, heaton2023explainable, mckenzie2024three, mckenziedifferentiating} to enable robust, automatic parameter tuning. 

\section{Conclusion}
Our work demonstrates that HJ-Prox can be successfully integrated into operator splitting frameworks while maintaining theoretical convergence guarantees, providing a generalizable method for solving composite convex optimization problems. By replacing exact proximal operators with a zeroth-order Monte Carlo approximation, we have established that algorithms such as PGD, DRS, DYS, and the PDHG method retain their convergence properties under mild conditions. This framework offers practitioners a universal approach to solving difficult nonsmooth optimization, reducing their reliance on complex proximal computations. Code for this work can be found at \url{https://github.com/nicholasdi2000/HJ-Splitting}.

\newpage
\section*{Acknowledgments}
The authors thank Howard Heaton for fruitful discussions.
\section*{Impact Statement}
This paper contributes advances to optimization methods in machine learning. By providing a general framework for approximating proximal operators, the work lowers the barrier for data scientists and statisticians to optimize complex objective functions without requiring closed-form proximal calculus or specialized mathematical derivations. We do not anticipate direct negative societal impacts arising from this work.
\bibliography{example_paper}
\bibliographystyle{icml2026}

\appendix
\onecolumn

\section{Proof of HJ-Prox Error Bound}
For completeness and ease of presentation, we restate the Theorem \ref{thm:hj_prox_error_bound}.

\textit{\hjproxerrorTheorem{app}}
\begin{proof}
Fix the parameters $t$ and $\delta$. For notational convenience, denote $\prox_{tf}(x)$ by $z^\star(x)$ and $\hjprox_{tf}(x)$ by $z_\delta(x)$. 
Let 
\begin{eqnarray}
    \phi_x(z) &=& f(z) + \frac{1}{2t}\|x-z\|^2.
\end{eqnarray}
Making the change of variable $w = z - z^\star(x)$ in \eqref{eq:hj_prox} enables us to express the approximation error as
\begin{eqnarray}
\label{eq:approximation_error}
    z_\delta(x) - z^\star(x) & = & \frac{\int w \exp\left(-\frac{\phi_x(z^\star(x) + w) - \phi_x(z^\star(x))}{\delta}\right) dw}{\int \exp\left(-\frac{\phi_x(z^\star(x) + w) - \phi_x(z^\star(x))}{\delta}\right) dw}.
\end{eqnarray}

Let 
\begin{eqnarray}
    Z_\delta & = & \int \exp\left(-\frac{\phi_x(z^\star(x) + w) - \phi_x(z^\star(x))}{\delta}\right) dw
\end{eqnarray}
and
\begin{eqnarray}
\label{eq:definition_g}
    g(w) & = & \phi_x(z^\star(x) + w ) - \phi_x(z^\star(x)).
\end{eqnarray}
Then
\begin{eqnarray}\label{eq:density}
    \rho_\delta(w) & = & \frac{e^{-\frac{g(w)}{\delta}}}{Z_\delta}
\end{eqnarray} 
defines a proper density. 

Equations \eqref{eq:approximation_error} and \eqref{eq:density} together imply that the approximation error can be written as the expected value of a continuous random variable $W$ whose probability law has the density $\rho_\delta$,
\begin{eqnarray}
\label{eq:error_as_expectation}
    z_\delta(x) - z^\star(x) & = & \int w \, \rho_\delta(w)dw \amp = \amp \E_{\rho_\delta}(W).
\end{eqnarray}
Taking the norm of both sides of \eqref{eq:error_as_expectation} leads to a bound on the norm of the approximation error.
\begin{eqnarray}
\label{eq:size_of_error_bound}
    \|z_\delta(x) - z^\star(x)\| & = & \|\E_{\rho_\delta}(W)\| \amp \leq \amp \E_{\rho_\delta}(\|W\|) \amp \leq \amp \sqrt{\E_{\rho_\delta}\left(\|W\|^2\right)}.
\end{eqnarray}
The first inequality is due to Jensen's inequality since norms are convex. The second is due to the Cauchy-Schwarz inequality. 

Our goal is to show that
\begin{eqnarray}
\label{eq:second_moment_bound}
\sqrt{\E_{\rho_\delta}\left (\lVert W \rVert^2\right)} & \leq & \sqrt{nt \delta},
\end{eqnarray}
since inequalities \eqref{eq:size_of_error_bound} and \eqref{eq:second_moment_bound} together imply that
\begin{eqnarray}
\|z_\delta(x) - z^\star(x)\| & \leq & \sqrt{nt \delta}.
\end{eqnarray}

We prove \eqref{eq:second_moment_bound} in two steps. We first show that
\begin{eqnarray}
\label{eq:step_1_bound}
\E_{\rho_\delta}\left( \rVert W \rVert^2\right) & \leq & t\E_{\rho_\delta}\left(\left\langle W, \nabla g(W)\right\rangle \right),
\end{eqnarray}
where $g$ is the convex function defined in \eqref{eq:definition_g}.
We then show that
\begin{eqnarray}
\label{eq:step_2_equality}
\E_{\rho_\delta}\left(\left\langle W, \nabla g(W)\right\rangle \right) & = & n\delta.
\end{eqnarray}
Before proceeding to prove these steps, we address our abuse of notation in \eqref{eq:step_1_bound} and \eqref{eq:step_2_equality}. 
Although $\nabla g$ may not exist everywhere, it exists almost everywhere. 
Recall that $g$ is locally Lipschitz because it is convex. Furthermore, any locally Lipschitz function is differentiable almost everywhere by Rademacher's theorem. Hence $\nabla g$ exists almost everywhere. Consequently, the expectation $\E_{\rho_\delta}\left \langle W, \nabla g(W) \right\rangle$ is well defined.

To show \eqref{eq:step_1_bound}, first note that by Fermat's rule, $0 \in \partial\phi_x(z^\star(x))$ where $\partial \phi(z)$ denotes the subdifferential of $\phi$ at $z$.
Consequently, for any $z \in \Real^n$
\begin{equation}
\label{eq:Inequality}
\begin{split}
    \phi_x(z) & \amp \geq \amp  \phi_x(z^\star(x)) + \langle 0, z - z^\star(x)\rangle + \frac{1}{2t}\|z-z^\star(x)\|^2 \\
    & \amp = \amp \phi_x(z^\star(x)) + \frac{1}{2t}\|z-z^\star(x)\|^2,
\end{split}
\end{equation}
since $\phi_x(z)$ is $\frac{1}{t}$-strongly convex. Plugging $z = z^\star(x) + w$ into inequality \eqref{eq:Inequality} implies that
\begin{eqnarray}
\label{eq:strong_convexity_inequality}
g(w) & \geq & \frac{1}{2t}\lVert w \rVert^2.
\end{eqnarray}
We also know that since $\phi_x(z)$ is $\frac{1}{t}$-strongly convex, so is $g(z)$, since $g$ is a translation of $\phi$ plus a constant shift. For $\frac{1}{t}$ strongly convex $g$ with $\nabla g(w)$ existing at point $w$, and setting $w' = 0$, the strong convexity inequality gives
\begin{eqnarray}
    g(0) & \geq &  g(w) + \langle \nabla g(w), 0 - w\rangle + \frac{1}{2t}\|0-w\|^2.
\end{eqnarray}
Therefore, 
\begin{eqnarray}\label{eq: Strong Convexity pt1}
    \langle  \nabla g(w), w\rangle &\geq& g(w) + \frac{1}{2t}\|w\|^2.
\end{eqnarray}
Using (\ref{eq:strong_convexity_inequality}) and (\ref{eq: Strong Convexity pt1}), 
\begin{eqnarray}
    \langle \nabla g(w), w \rangle \geq \frac{1}{t}\|w\|^2,
\end{eqnarray}
which implies that
\begin{eqnarray}
    \frac{1}{t}\int \lVert w \rVert^2 \rho_\delta(w)dw & \leq & \int \langle w,\nabla g(w)\rangle \rho_{\delta}(w)dw.
\end{eqnarray}

To show \eqref{eq:step_2_equality}, consider $w$ where $\nabla g$ exists and let $h(w) = \exp(-g(w)/\delta)$. By the chain rule
\begin{eqnarray}
\label{eq:chain_rule}
\frac{\partial}{\partial w_j} g(w)h(w) & = & - \delta \frac{\partial}{\partial w_j}h(w).
\end{eqnarray}
Integrating both sides of \eqref{eq:chain_rule} over $\Real^d$ gives
\begin{equation}
\label{eq:component}
\begin{split}
    \int_{\Real^n} w_j \frac{\partial}{\partial w_j}g(w) h(w) dw  & \amp = \amp  -\delta \int_{\Real^n}  w_j \frac{\partial}{\partial w_j} h(w) dw \\ 
    & \amp = \amp -\delta\int_{\Real^{n-1}}\left[\int_{-\infty}^\infty w_j \frac{\partial}{\partial w_j}h(w)dw_j\right]dw_{-j},
\end{split}
\end{equation}
where
$w_{-j}$ is the subvector of $w$ containing all but its $j$th element.

Applying integration by parts on the right hand side of \eqref{eq:component} gives
\begin{eqnarray}
\int_{-\infty}^\infty w_j \frac{\partial}{\partial_{w_j}}h(w) d w_j & = & w_j h(w)\Big |_{-\infty}^\infty - \int_{-\infty}^\infty h(w)dw_j.
\end{eqnarray}
Note that \eqref{eq:strong_convexity_inequality} implies that
\begin{eqnarray}
\underset{w_j \rightarrow \infty}{\lim}\;
\left\lvert w_j h(w) \right\rvert & \leq & \underset{w_j \rightarrow \infty}{\lim}\;
\left\lvert w_j  e^{-\frac{\lVert w \rVert^2}{2t}}\right\rvert \amp = \amp 0.
\end{eqnarray}
Consequently,
\begin{eqnarray}
\label{eq:integration_by_parts}
       \int_{\Real^{n}} w_j \frac{\partial}{\partial_{w_j}}h(w)dw & = & \int_{\Real^{n-1}}\left[-\int_{-\infty}^\infty h(w)dw_j \right]dw_{-j} \amp = \amp -Z_\delta.
\end{eqnarray}
Equations \eqref{eq:component} and \eqref{eq:integration_by_parts} together imply that
\begin{eqnarray}
\E_{\rho_\delta}\left( W_j \frac{\partial}{\partial w_j} g(W) \right) & = & \delta.
\end{eqnarray}
The linearity of expectations gives \eqref{eq:step_2_equality} completing the proof.
\end{proof}
\section{Proof of Monte Carlo HJ-Prox Error Bound}\label{Ap: MC Error}

For completeness and ease of presentation, we restate the Theorem \ref{thm:hj_prox_error_bound_mc}.

\noindent \textbf{Theorem.~\ref{thm:hj_prox_error_bound_mc}.}
\textit{\hjproxMCerrorTheorem{app}}


\begin{proof}
Let 
\begin{eqnarray}
    \phi_x(z) &=& f(z) + \frac{1}{2t}\|z-x\|_2^2,
\end{eqnarray}
and 
\begin{eqnarray}
    C^x_\delta &=& \int_{\Real^n} \exp\left(-\frac{\phi_x(z)}{\delta}\right)dz.
\end{eqnarray}
Then 
\begin{eqnarray}
    \pi(dz)  &=& \frac{\exp\left(-\frac{\phi_x(z)}{\delta}\right)}{C^x_\delta}dz
\end{eqnarray}
is a probability measure. Let $Z$ be a random variable with law $\pi$. The smoothed proximal point can be interpreted as the expected value under $\pi$. It is hard to directly sample from $\pi$, so we estimate $\hjprox_{tf}(x)$ via Self Normalized Importance Sampling under a Gaussian Proposal, 
\begin{eqnarray}
    q & \sim &\mathnormal{N}(x,\sigma^2 I_n), \\ \sigma^2 &=& t\delta. 
\end{eqnarray}
We first decompose our error into a Monte Carlo and deterministic piece via the triangle inequality, 
    \begin{eqnarray}
    \|\sampledhjprox_{tf}(x) - \prox_{tf}(x)\|&=& \|\sampledhjprox_{tf}(x) -\hjprox_{tf}(x) + \hjprox_{tf}(x) -\prox_{tf}(x)\|\\
    &\leq& \|\sampledhjprox_{tf}(x) - \hjprox_{tf}(x)\| + \|\hjprox_{tf}(x) - \prox_{tf}(x)\|. \label{eq: Error Triangle Inequality}
\end{eqnarray}
We have previously shown that the deterministic part is uniformly bounded by $\sqrt{nt\delta}$. Let $N$ be the sample size used to estimate the proximal operator. For notational purpose, set
\begin{eqnarray}
    \mu & = & \hjprox_{tf}(x), \quad
    \hat{\mu}_N  \amp = \amp \sampledhjprox_{tf}(x).
\end{eqnarray}
Define a ratio 
\begin{eqnarray}
    r(z) &=& \frac{\pi(z)}{q(z)},
\end{eqnarray}
and quantities which are functions of $Z_i \overset{\mathrm{iid}}\sim N(x, \sigma^2I_n)$, 
\begin{eqnarray}
     U_N &=& \frac{1}{N}\sum_{i=1}^Nr(Z_i)(Z_i-\mu), \quad V_N \amp = \amp \frac{1}{N}\sum_{i=1}^Nr(Z_i).
\end{eqnarray}
Using the ratio, we can write the Monte Carlo error as a fraction 
\begin{eqnarray}
    \widehat{\mu}_N - \mu &=& \frac{U_N}{V_N},
\end{eqnarray}
Let us define the following events for some $\varepsilon >0$
\begin{eqnarray}
    A\amp= \amp\left\{V_N \geq \frac{1}{2}\right\},& B = \left\{\|\hat{\mu}_N- \mu\|_2 >\varepsilon\right\}.
\end{eqnarray}
We control the SNIS bias via a ratio‑of‑means decomposition, bounding the denominator with Chebyshev and the numerator’s second moment by change of measure and Poincaré's inequality. This event split is a common approach used in bounding Monte Carlo Estimates, as seen in Theorem 2.1 \cite{AgapiouPPS2017}, and Section 5.3.2 \cite{PeñaLaiShao2009}. Using the law of total probability, 
\begin{eqnarray}
    P(B) &=& P(A^c \cap B) + P(A \cap B) \amp \leq \amp P(A^c) + P(A \cap B).
\end{eqnarray}
Take the expectation of $r(Z)$ with respect to the measure $q$,
\begin{eqnarray}
    \E_q[r(Z)] &=& \int\frac{\pi(z)}{q(z)}q(z)dz\amp=\amp1.
\end{eqnarray}
Since $r(Z_i)$ are iid, 
\begin{eqnarray}
    \text{Var}(V_N) &=& \frac{1}{N^2} \sum _i^N \text{Var}\left( r(Z_i)\right) \amp  = \amp  \frac{\E_q[r(Z)^2]-1^2}{N}.
\end{eqnarray}
Event $A^c$ can be written as a subset
\begin{eqnarray}
    A^c &=& \{V_N< \frac{1}{2} \}\amp \subseteq \amp  \{|V_N - 1|  \geq \frac{1}{2}\},
\end{eqnarray}
where we can bound using Chebyshev's inequality
\begin{eqnarray}
    P(A^c) &\leq& P(|V_N - 1| > \frac{1}{2}) \amp \leq \amp\frac{\text{Var}(V_N)}{\frac{1}{2}^2} \amp < \amp \frac{4\E_q[r(Z)^2]}{N}. \label{eq: Bound on A^c}
\end{eqnarray}
In event $A \cap B$, 
\begin{eqnarray}
    \|\widehat\mu_N - \mu\| &=& \left\|\frac{U_N}{V_N}\right\| \amp \leq \amp 2\|U_N\|. 
\end{eqnarray}
Recall we can rewrite the error as a fraction, 
\begin{eqnarray}
    B & = & \{ \|\hat{\mu} - \mu\|^2 > \varepsilon\} \amp = \amp  \left\{ \left\|\frac{U_N}{V_N}\right\|^2 > \varepsilon\right\}.
\end{eqnarray}
When both A and B are true, the intersection of the event is contained in the following subset,
\begin{eqnarray}
    A \cap B &\subseteq& \{ 4 \|U_N\|^2 > \varepsilon\}. 
\end{eqnarray}
which could be bounded using Markov inequality bound, 
\begin{eqnarray}
    P(A \cap B) \leq P(4 \|U_N\|^2 > \varepsilon ) &<& \frac{4\E_q[\|U_N\|^2]}{\varepsilon}.\label{eq: Bounding A cap B}
\end{eqnarray}
Putting together (\ref{eq: Bound on A^c}) and (\ref{eq: Bounding A cap B}), 
\begin{eqnarray}
    P(\|\hat{\mu}_N - \mu\|^2 > \varepsilon) &\leq &\frac{4\E_q[r(Z)^2]}{N} + \frac{4\E_q[\|U_N\|^2]}{\varepsilon}. \label{eq: Probabilstic Bound}
\end{eqnarray}
The following 3 lemmas will decompose the second moment of $U_N$ and compute proper upper bounds to necessary components giving us a nice probabilistic statement. 
\begin{lemma}[Second Moment Identity]\label{lem: Second Moment Identity}
    The second moment of the numerator $\E_q[\|U_N\|^2]$ can be decomposed into $\frac{\E_\pi[r(Z)]}{N}\E_{\pi^*}[\|Z-\mu\|^2]$, where $\pi^*$ is a tilted measure of $\pi$.
\end{lemma}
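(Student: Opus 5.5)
The plan is to expand $\E_q[\|U_N\|^2]$ directly and then rewrite each resulting integral against $\pi$ and a reweighted version of it. Set $Y_i = r(Z_i)(Z_i-\mu)$. Under $q$ the $Y_i$ are iid, so $\E_q[\|U_N\|^2] = \frac{1}{N^2}\sum_{i,j}\E_q\langle Y_i, Y_j\rangle$. The cross terms with $i\neq j$ factor as $\|\E_q[Y_1]\|^2$. By change of measure, $\E_q[Y_1] = \int \frac{\pi(z)}{q(z)}(z-\mu)q(z)\,dz = \E_\pi[Z-\mu]$, and this is $0$ because $\mu = \hjprox_{tf}(x)$ is exactly the mean of $\pi$. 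So only the diagonal survives:
\begin{equation*}
\E_q[\|U_N\|^2] = \frac{1}{N}\E_q\bigl[r(Z)^2\|Z-\mu\|^2\bigr].
\end{equation*}

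Next I would change measure once more, writing $\E_q[r(Z)^2\|Z-\mu\|^2] = \int r(z)\,\|z-\mu\|^2\,\pi(z)\,dz = \E_\pi[r(Z)\|Z-\mu\|^2]$. Define the tilted measure
\begin{equation*}
\pi^*(dz) = \frac{r(z)\,\pi(z)}{\E_\pi[r(Z)]}\,dz,
\end{equation*}
which is a probability measure as long as $\E_\pi[r(Z)] = \E_q[r(Z)^2]$ is finite. Then $\E_\pi[r(Z)\|Z-\mu\|^2] = \E_\pi[r(Z)]\,\E_{\pi^*}[\|Z-\mu\|^2]$, which is the claimed identity.

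The main obstacle is checking that every expectation above is finite, so that the iid expansion and the normalization of $\pi^*$ are legitimate. Up to constants, $r(z) \propto \exp\bigl(-f(z)/\delta - \frac{\|z-x\|^2}{2t\delta} + \frac{\|z-x\|^2}{2t\delta}\bigr) = \exp(-f(z)/\delta)$, because the proposal variance is $\sigma^2 = t\delta$ and the Gaussian factors cancel. Since $f$ is convex and $L$-Lipschitz, $f(z) \ge f(x) - L\|z-x\|$, so $r$ grows at most exponentially-linearly. Against the Gaussian tails of $\pi$ this gives finiteness of $\E_\pi[r(Z)]$ and $\E_\pi[r(Z)\|Z-\mu\|^2]$. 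This same estimate should later produce the constant $J_\star$ in the next lemmas.
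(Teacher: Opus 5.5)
Your proposal is correct and follows the paper's proof essentially step for step: use independence to reduce $\E_q[\|U_N\|^2]$ to $\frac{1}{N}\E_q[r(Z)^2\|Z-\mu\|^2]$, change measure to $\pi$, and normalize by $\E_\pi[r(Z)]$ to define the tilted measure $\pi^*$. You also supply two justifications the paper leaves implicit, which makes the argument more complete: the cross terms vanish because $\mu=\hjprox_{tf}(x)$ is exactly the mean of $\pi$, and the relevant moments are finite because $r(z)\propto \exp(-f(z)/\delta)$ and $f$ is Lipschitz.
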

\begin{proof}
    Let us construct a tilted measure
    \begin{eqnarray}
        \pi^*(z) &=& \frac{r(z)}{\E_\pi[r(Z)]}\pi(z).
    \end{eqnarray}
Since $Z_i$ for $ i \in [N]$ are iid and the expectation of the cross product terms in $U_N$ are 0, 
\begin{eqnarray}
    \E_q \left[\left\|\frac{1}{N}\sum_i r(Z_i)(Z_i-\mu)\right\|^2\right] &=& \frac{1}{N^2}\sum_i \E_q\left[\left\|r(Z_i)(Z_i-\mu)\right\|^2\right]\\&=&\frac{ \E_q[r(Z)^2\|Z-\mu\|^2]}{N}\label{eq: Second Moment Identity}.
\end{eqnarray}
Continuing on from equation (\ref{eq: Second Moment Identity}) we can multiply by $\frac{\E_\pi[r(Z)]}{\E_\pi[r(Z)]}$ and algebraically manipulate the expression into two separate expectations,
\begin{align}
    &\frac{1}{N}\E_q[r(Z)^2\|Z-\mu\|^2] = \frac{1}{N}\int r(z)^2\|z-\mu\|^2q(z)dz = \frac{1}{N}\int r(z)\|z-\mu\|^2 \pi(z)dz \label{eq: Exapsnsion of numerator pt 1}\\ 
    &= \frac{\E_\pi[r(Z)]}{\E_\pi[r(Z)]} \cdot \frac{1}{N} \int \|z-\mu\|^2r(z)\pi(z)dz = \E_\pi[r(Z)] \frac{1}{N}\int\|z-\mu\|^2 \frac{r(z)}{\E_\pi[r(Z)]} \pi(z)dz
    \\&= \E_\pi[r(Z)] \frac{1}{N}\int\|z-\mu\|^2 \pi^*(z)dz= \frac{1}{N}\E_\pi[r(Z)]\E_{\pi^*}[\|Z-\mu\|^2] \label{eq: Exapsnsion of numerator pt 3}.
\end{align}
\end{proof}
\begin{lemma}[Bound on {$\E_{\pi^*}[\|Z-\mu\|^2]$}]\label{lem: M Bound}
The expectation of $\|Z- \mu\|^2$ under $\pi^*$ is bounded by $M_\star = nt\delta + (2\sqrt{nt\delta} + 3Lt)^2$
\end{lemma}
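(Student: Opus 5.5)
The plan is to identify the tilted measure $\pi^*$ as another Gibbs density of HJ-Prox type, and then reuse the second-moment estimate from the proof of Theorem~\ref{thm:hj_prox_error_bound}. By definition $\pi^*(z)\propto r(z)\pi(z)=\pi(z)^2/q(z)$. Since $q$ is the $\mathcal N(x,t\delta I_n)$ density, $q(z)\propto \exp(-\|z-x\|^2/(2t\delta))$, and therefore
\begin{equation*}
\pi^*(z)\;\propto\;\exp\!\Big(-\tfrac{2f(z)}{\delta}-\tfrac{\|z-x\|^2}{t\delta}+\tfrac{\|z-x\|^2}{2t\delta}\Big)
=\exp\!\Big(-\tfrac{f(z)+\frac{1}{2(2t)}\|z-x\|^2}{\delta/2}\Big).
\end{equation*}
So $\pi^*$ is exactly the HJ Gibbs measure for $f$ with proximal parameter $t'=2t$ and smoothing parameter $\delta'=\delta/2$. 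Note that $t'\delta'=t\delta$. Its mean is $\mu^*=\operatorname{prox}^{\delta/2}_{2tf}(x)$. Normalizability is not an issue, because $f$ is Lipschitz and the quadratic term dominates.

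Next I split the second moment by the bias–variance identity,
\begin{equation*}
\E_{\pi^*}\|Z-\mu\|^2=\E_{\pi^*}\|Z-\mu^*\|^2+\|\mu^*-\mu\|^2 .
\end{equation*}
For the first term, I use that the mean minimizes expected squared distance, so it is at most $\E_{\pi^*}\|Z-\prox_{2tf}(x)\|^2$. The proof of Theorem~\ref{thm:hj_prox_error_bound} shows, via \eqref{eq:second_moment_bound}, that this quantity is at most $nt'\delta'=nt\delta$. That argument only uses $\tfrac1{t'}$-strong convexity and integration by parts, so it applies verbatim with $(t',\delta')$.

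For the second term, I chain through exact proximal points:
\begin{equation*}
\|\mu^*-\mu\|\le \|\mu^*-\prox_{2tf}(x)\|+\|\prox_{2tf}(x)-\prox_{tf}(x)\|+\|\prox_{tf}(x)-\mu\|.
\end{equation*}
The outer two pieces are each at most $\sqrt{n t\delta}$ by Theorem~\ref{thm:hj_prox_error_bound}, applied with $(2t,\delta/2)$ and with $(t,\delta)$ respectively.

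The middle piece is where the Lipschitz constant enters. By Fermat's rule, $\prox_{sf}(x)=x-s\,v_s$ for some $v_s\in\partial f(\prox_{sf}(x))$, and $\|v_s\|\le L$ because $f$ is $L$-Lipschitz. Hence $\|\prox_{sf}(x)-x\|\le sL$. The triangle inequality through $x$ then gives
\begin{equation*}
\|\prox_{2tf}(x)-\prox_{tf}(x)\|\le 2tL+tL=3Lt .
\end{equation*}
Putting the pieces together yields $\E_{\pi^*}\|Z-\mu\|^2\le nt\delta+(2\sqrt{nt\delta}+3Lt)^2=M_\star$.

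The main obstacle is the first step: recognizing the tilted density as an HJ Gibbs measure with rescaled parameters $(2t,\delta/2)$, and checking that the product $t\delta$ is preserved so that Theorem~\ref{thm:hj_prox_error_bound} gives the same $\sqrt{nt\delta}$ rate. After that, the remaining steps are triangle inequalities.
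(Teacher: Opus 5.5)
Your proof is correct, and its skeleton matches the paper's: the same bias--variance split, and the same chain $\E_{\pi^*}Z \to \prox_{2tf}(x) \to \prox_{tf}(x) \to \mu$. Both of you use Theorem~\ref{thm:hj_prox_error_bound} at the two ends and the Fermat-rule $3Lt$ estimate in the middle. The paper's $z_2^* = \arg\min_z\{2f(z) + \frac{1}{2t}\|z-x\|^2\}$ is exactly your $\prox_{2tf}(x)$.

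The paper reads $\pi^*$ as the Gibbs measure of $2f$ at parameters $(t,\delta)$. It justifies reusing the $\sqrt{nt\delta}$ bound only by noting that $\pi$ and $\pi^*$ share the same strong-convexity parameter. Your rewriting of $\pi^*$ as the Gibbs measure of $f$ at $(2t,\delta/2)$, with $t'\delta' = t\delta$, makes that step explicit in terms of the theorem exactly as stated.

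The genuine difference is the variance term. The paper invokes an external Poincar\'e inequality for $\frac{1}{t\delta}$-strongly log-concave measures and tests it on linear functions, which gives $\mathrm{Cov}_{\pi^*}(Z) \preceq t\delta I$; taking the trace then gives $nt\delta$. You instead bound the centered second moment by the second moment about the mode $\prox_{2tf}(x)$. You then reuse the integration-by-parts estimate \eqref{eq:second_moment_bound} from the proof of Theorem~\ref{thm:hj_prox_error_bound}.

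The paper's route gives an eigenvalue-wise covariance bound that the lemma does not need. Yours stays entirely within the paper's own machinery. It also makes the bias--variance split unnecessary: Minkowski's inequality in $L^2(\pi^*)$ gives $\bigl(\E_{\pi^*}\|Z-\mu\|^2\bigr)^{1/2} \le \sqrt{nt\delta} + \|\prox_{2tf}(x)-\mu\| \le 2\sqrt{nt\delta} + 3Lt$. That bound is $M_\star - nt\delta$, slightly sharper than the stated $M_\star$.
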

\begin{proof}
    We decompose $\E_{\pi^*}[\|Z- \mu\|^2]$ using the bias variance identity \cite{MardiaKentBibby1979pdf}, 
    \begin{eqnarray}
        \E_{\pi^*}[\|Z-\mu\|^2] \amp = \amp \text{Tr}(\text{Cov}_{\pi^*}(Z)) + \|\E_{\pi^*}Z - \E_\pi Z\|^2.
    \end{eqnarray}
Recall the Poincaré Inequality with constant C \cite{BalasubramanianEtAl2022},
\begin{eqnarray}\label{eq: PoinCare inequality}
    \text{Var}_{\pi^*}(g(Z)) &\leq& C \int_{\Real^n}\|\nabla g(z)\|^2\pi^*(z)dz 
\end{eqnarray}
where $g: \Real^n \rightarrow\Real$ and $g,\|\nabla g(x)\|$ are square-integrable with respect to $\pi^*$. It is a known result where if $\pi^*$ is $\frac{1}{t\delta}$-strongly log-concave, then $\pi^*$ satisfies a Poincaré Inequality with $C = t\delta$.
Define a $g: \Real^n \rightarrow \Real$, 
\begin{eqnarray}
    g(z) = u^Tz, \amp u \in \Real^n.
\end{eqnarray}
For measure $\pi^*$ and our defined $g$, (\ref{eq: PoinCare inequality}) gives us, 
\begin{eqnarray}
    u^\top\text{Var}_{\pi^*}(Z)u &\leq& t\delta \|u\|^2.
\end{eqnarray}
Note $\text{Var}_{\pi^*}(Z)$ is a symmetric matrix hence the inequality above implies, 
\begin{eqnarray}\label{eq: bounding the sup of Sigma}
    \sup_{\|u\|^2 =1} u^\top \text{Var}_{\pi^*}(Z) u \leq t\delta &\implies & \text{Tr}(\text{Var}_{\pi^*}(Z)) \leq nt\delta
\end{eqnarray} 
bounding the quadratic form for all unit vectors bounds all eigenvalues by $t\delta$.
Before bounding the bias term, we introduce intermediate reference points
\begin{eqnarray}
    z_1^* = \arg\min_z\left\{f(z) + \frac{1}{2t}\|z-x\|^2\right\}, \amp  z_2^* = \arg\min_z\left\{2f(z) + \frac{1}{2t}\|z-x\|^2\right\}.
\end{eqnarray}
The factor of 2 on $f(z)$ reflects that $\pi^*$ includes an additional $f(z)$ term from $r(z)$,
\begin{eqnarray}
    \pi(dz) \propto \exp\left(-\frac{f(z) + \frac{1}{2t}\|z-x\|^2}{\delta}\right), \amp \pi^*(dz) \propto \exp\left(-\frac{2f(z) + \frac{1}{2t}\|z-x\|^2}{\delta}\right).
\end{eqnarray}
We decompose the bias as
\begin{eqnarray}
    \|\E_{\pi^*}Z - \E_{\pi}Z\| &\leq& \|\E_{\pi^*}Z - z_2^*\| + \|z_2^* - z_1^*\| + \|\E_\pi Z - z_1^*\|.
\end{eqnarray}
By Fermat's rule for the minimizers Theorem 26.2 \cite{bach2017}.
\begin{eqnarray}
    \frac{1}{t}(x-z_1^*) &\in& \partial f(z_1^*), \quad \frac{1}{t}(x-z_2^*)\amp\in \amp2\partial f(z_2^*).
\end{eqnarray}
Since $f$ is convex and $L$-Lipschitz, all subgradients satisfy $\|g\| \leq L$ for $g \in \partial f(x)$. Therefore,
\begin{eqnarray}
    \|z_1^* - x\| &\leq& tL, \quad \|z_2^* - x\| \amp\leq\amp 2tL.
\end{eqnarray}
By triangle inequality,
\begin{eqnarray}
    \|z_2^* - z_1^*\| &\leq& \|z_2^* - x\| + \|z_1^* - x\|\amp\leq\amp 3tL \label{eq: Triangle Inequality 3tL},
\end{eqnarray}

We note that $\pi$ and $\pi^*$ share the same strong-convexity parameter, thus the smoothing error bound $\sqrt{nt\delta}$ applies to both,
\begin{eqnarray}
    \|\E_\pi Z- z_1^*\| &\leq& \sqrt{nt\delta}, \quad \|\E_{\pi^*} Z- z_2^*\| \amp\leq\amp \sqrt{nt\delta}. \label{eq: Smoothing Bounds}
\end{eqnarray}
Putting together (\ref{eq: bounding the sup of Sigma}), (\ref{eq: Triangle Inequality 3tL}), and (\ref{eq: Smoothing Bounds}), 
we obtain, 
\begin{eqnarray}
    \E_{\pi^*}[\|Z-\mu\|^2] &\leq& nt\delta + (2\sqrt{nt\delta} + 3tL)^2 \amp=\amp M_\star.
\end{eqnarray}
\end{proof}
\begin{lemma}[Bound on {$\E_q[r(Z)^2]$}]\label{lem: J Bound}
    The expectation of $r(Z)$ under $\pi$ is bounded by $J_\star = \exp(\frac{2}{\delta}L^2t)$.
\end{lemma}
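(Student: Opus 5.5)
The plan is to compute $\E_q[r(Z)^2]$ in closed form as a ratio of Gaussian expectations, and then bound it with Gaussian concentration for Lipschitz functions.

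\textbf{Step 1: rewrite $r$.} First observe that $\E_q[r(Z)^2]=\int \pi(z)^2/q(z)\,dz=\E_\pi[r(Z)]$, so both quantities in the statement coincide. Next, write the target density and the proposal explicitly:
\begin{equation*}
\pi(z)=\frac{1}{C^x_\delta}\,e^{-f(z)/\delta}\,e^{-\|z-x\|^2/(2t\delta)},
\qquad
q(z)=(2\pi t\delta)^{-n/2}\,e^{-\|z-x\|^2/(2t\delta)}.
\end{equation*}
The Gaussian factors cancel, and $C^x_\delta=(2\pi t\delta)^{n/2}\,\E_q[e^{-f(Y)/\delta}]$. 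Hence
\begin{equation*}
r(z)=\frac{e^{-f(z)/\delta}}{\E_q[e^{-f(Y)/\delta}]}
\quad\text{and}\quad
\E_q[r(Z)^2]=\frac{\E_q[e^{-2f(Z)/\delta}]}{\bigl(\E_q[e^{-f(Z)/\delta}]\bigr)^2}.
\end{equation*}
The claim is therefore that this ratio of exponential moments under $\mathcal{N}(x,t\delta I)$ is at most $e^{2L^2t/\delta}$.

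\textbf{Step 2: center at the mean.} Set $h(z)=-f(z)/\delta$, which is $(L/\delta)$-Lipschitz, and let $m=\E_q[h(Z)]$. This is finite because $f$ is Lipschitz and Gaussians have finite first moments. For the denominator, Jensen's inequality gives $\E_q[e^{h(Z)}]\ge e^{m}$, so $\bigl(\E_q[e^{h}]\bigr)^2\ge e^{2m}$. For the numerator, write $\E_q[e^{2h}]=e^{2m}\,\E_q[e^{2(h-m)}]$. The factors $e^{2m}$ cancel, leaving
\begin{equation*}
\E_q[r(Z)^2]\le \E_q\bigl[e^{2(h(Z)-m)}\bigr].
\end{equation*}

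\textbf{Step 3: Gaussian concentration.} I will use the sub-Gaussian moment generating function bound for Lipschitz functions of a Gaussian vector. If $Y\sim\mathcal{N}(x,\sigma^2 I)$ and $h$ is $\ell$-Lipschitz, then
\begin{equation*}
\E\bigl[e^{\lambda(h(Y)-\E h(Y))}\bigr]\le e^{\lambda^2\sigma^2\ell^2/2}.
\end{equation*}
This follows from the Gaussian log-Sobolev inequality via Herbst's argument, or equivalently from the Tsirelson--Ibragimov--Sudakov inequality. Apply it with $\lambda=2$, $\sigma^2=t\delta$ and $\ell=L/\delta$. The exponent becomes $4\cdot t\delta\cdot L^2/(2\delta^2)=2L^2t/\delta$, so $\E_q[r(Z)^2]\le \exp(2L^2t/\delta)=J_\star$, as claimed.

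\textbf{Main obstacle.} The delicate point is Step 3. The cruder approach bounds $|f(z)-f(x)|\le L\|z-x\|$ and then controls $\E e^{c\|Y\|}$ directly, but this produces dimension-dependent factors. Only the centered concentration inequality gives the clean, dimension-free constant $\exp(2L^2t/\delta)$. I will cite it as a known result, noting that it requires only that $f$ be Lipschitz (so $h$ is Lipschitz); convexity of $f$ is not needed for this lemma.
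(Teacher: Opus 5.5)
Your proposal is correct and follows essentially the same route as the paper. Both arguments rewrite $\E_q[r(Z)^2]=\E_\pi[r(Z)]$ as the ratio $\E_q[e^{-2f(Z)/\delta}]/(\E_q[e^{-f(Z)/\delta}])^2$, lower-bound the denominator by Jensen, and bound the centered numerator with the Tsirelson--Ibragimov--Sudakov Gaussian Lipschitz MGF bound to get $\exp(2L^2t/\delta)$. The only difference is cosmetic: the paper reparametrizes through a standard Gaussian with $h(G)=f(x+\Sigma^{1/2}G)$ and $\lambda=-2/\delta$, while you work directly under $\mathcal{N}(x,t\delta I)$ with $h=-f/\delta$ and $\lambda=2$.
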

\begin{proof}
    Recall $r(z) = \frac{\pi(z)}{q(z)}$, define 
\begin{eqnarray}
    c(x) &=& \frac{(2\pi t\delta)^{n/2}}{C^x_\delta}, \quad w(z) \amp=\amp \exp\left(-\frac{f(z)}{\delta}\right)
\end{eqnarray}
where our $c(x)$ depends only on $x$ since we have integrated out $z$ from the kernel of $\pi$ and what's left is where our Gaussian Proposal is centered. thus we can decompose, 
\begin{eqnarray}
    r(z) &=& c(x)w(z).
\end{eqnarray}
It is worth taking a look at $\E_q[w(Z)]$, 
\begin{eqnarray}
    \E_q[w(Z)]&=&  (2\pi \delta t )^{-n/2} \int e^{-f(z)/ \delta} e^{-\|z-x\|^2/2t\delta}dz \amp=\amp \frac{C_\delta^x}{(2\pi t \delta )^{n/2}} \amp=\amp \frac{1}{c(x)}
\end{eqnarray}
thus, 
\begin{eqnarray}
    \E_\pi[r(Z)] &=& \E_q[r(Z)^2] \amp=\amp c(x)^2\E_q[w(Z)^2] \amp=\amp \frac{\E_q[w(Z)^2]}{\E_q[w(Z)]^2}. \label{eq: Weights Ratio}
\end{eqnarray}
The Tsirelson-Ibragimov-Sudakov concentration inequality in Theorem 5.5 \cite{BoucheronLugosiMassart2016} bounds $L$-Lipschitz functions of Gaussian random variables. Let $X = (X_1,...X_n)$ be a vector of n independent standard Normal random variables. Let $h: \Real^n \rightarrow \Real$ denote an $L$-Lipschitz function. Then for all $\lambda \in \Real$
\begin{eqnarray}\label{eq: TIS Concentration Inequality}
    \log\E[\exp(\lambda(h(X) - \E h(X))]&\leq& \frac{\lambda^2}{2}L^2.
\end{eqnarray}
Let $\Sigma$ be the covariance matrix of $q$ and define our function $h$, 
\begin{eqnarray}
    h(G) &=& f(x + \Sigma^{1/2}G), \quad G \amp \sim \amp N(0, I).
\end{eqnarray}
Two key qualities about $h$ to note is that it is equivalent to function $f(Z)$ when $Z \sim N(x, \delta tI)$ and that $h$ is $\sqrt{\delta t}L$-Lipschitz in $G$. Take the log of (\ref{eq: Weights Ratio}), 
\begin{eqnarray}
     \log\left(\frac{\E_q[w(Z)^2]}{\E_q[w(Z)]^2}\right) &=& \log \E_g[e^{-\frac{2}{\delta}h(G)}]-2\log\E_g[e^{-\frac{1}{\delta}h(G)}] \label{eq: log weighted bounds}.
\end{eqnarray}
By adding and subtracting $\E[h(G)]$ in the exponent, we can upper bound the first term using the TIS inequality with $L = \sqrt{\delta t}$ and $\lambda = -\frac{2}{\delta}$,
\begin{eqnarray}
    \log \E_g[e^{-\frac{2}{\delta}h(G)}]  &\leq& -\frac{2}{\delta}\E_g[h(G)] + \frac{2}{\delta}L^2t. \label{eq: TIS eq pt 1}
\end{eqnarray}
By Jensen’s inequality for the concave function log  we can lower bound the expectation,
\begin{eqnarray}\label{eq: TIS eq pt 2}
 2\log\E_g[e^{-\frac{1}{\delta}h(Z)}] &\geq& 2\E_g[\log e^{-\frac{1}{\delta}h(G)}] \amp=\amp -\frac{2}{\delta}\E_g[h(G)]. 
\end{eqnarray}
Putting together equations (\ref{eq: TIS eq pt 1}) and (\ref{eq: TIS eq pt 2}) to (\ref{eq: log weighted bounds}) and exponentiate, we obtain
\begin{eqnarray}
    \E_\pi[r(Z)] &\leq& \exp\left(\frac{2}{\delta}L^2t\right) = J_\star.
\end{eqnarray}
\end{proof}
We can use lemma \ref{lem: Second Moment Identity} and the fact that $\E_\pi[r(Z)] = \E_q[r(Z)^2]$ to rewrite (\ref{eq: Probabilstic Bound}), 
\begin{eqnarray}
    P(\|\hat{\mu}_N - \mu\|^2 > \varepsilon) &\leq &\frac{4\E_q[r(Z)^2]}{N} + \frac{4\E_q[r(Z)^2]\E_{\pi^*}[\|Z-\mu\|^2]}{N\varepsilon}. \label{eq: Probabilstic Bound}
\end{eqnarray}
Lemma \ref{lem: J Bound} and \ref{lem: M Bound} allow us to choose $\varepsilon = \frac{8J_\star M_\star}{\alpha N}$ and for $N\geq \frac{8J_\star}{\alpha}$ for $\alpha \in (0,1)$ which gives us a nice probabilistic bound, 
\begin{eqnarray} \label{eq: Borel Cantelli Formulation}
    P\Big(\| \sampledhjprox_{tf}(x) - \hjprox_{tf}(x)\| > \sqrt{\frac{8J_\star M_\star}{\alpha N}}\Big) &\leq& \alpha.
\end{eqnarray}
Reincorporating the smoothing error and triangle inequality in \ref{eq: Error Triangle Inequality}, we obtain a  high probability error bound for our sampled proximal mapping independent of $x$, 
\begin{eqnarray}
    P\Big(\| \sampledhjprox_{tf}(x) - \prox_{tf}(x)\| \leq \sqrt{\frac{8J_\star M_\star}{\alpha N}} + \sqrt{nt\delta}\Big) &>& 1- \alpha .
\end{eqnarray}
\end{proof}
\section{HJ-Prox-based PPM Convergence}
For completeness and ease of presentation, we restate the theorem.

\noindent \textbf{Proof of Thm.~\ref{theorem:hjppm}.}
\textit{\ppmTheorem{app}}
\begin{proof}
\begin{lemma}[PPM Diminishing Stepsizes]\label{lem: ChangingTPPM}
    Consider the nonempty solution set for LSC and convex $f$, $S = \arg\min f$. Define the PPM iterations generated by $x_{k+1} = \prox_{t_k f}(x_k)$ with step sizes $t_k = 1/(k+1)$. Then, every cluster point of the sequence $(x_k)_{k \geq 0}$ is an optimal solution in $S$.
\end{lemma}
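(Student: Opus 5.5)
The plan is to use the classical descent inequality for the proximal point method with variable step sizes, and to exploit the fact that $\sum_k t_k = \sum_k 1/(k+1) = \infty$.

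\textbf{Step 1: basic inequality.} Fix $x^\star \in S$. Fermat's rule gives $(x_k - x_{k+1})/t_k \in \partial f(x_{k+1})$. The subgradient inequality at $x^\star$, combined with the identity $2\langle x_k - x_{k+1}, x_{k+1}-x^\star\rangle = \|x_k-x^\star\|^2 - \|x_{k+1}-x^\star\|^2 - \|x_k-x_{k+1}\|^2$, yields
\begin{equation*}
2t_k\bigl(f(x_{k+1}) - f^\star\bigr) + \|x_{k+1}-x_k\|^2 \le \|x_k - x^\star\|^2 - \|x_{k+1}-x^\star\|^2 .
\end{equation*}
This gives three consequences:
\begin{itemize}
\item $\{x_k\}$ is Fej\'er monotone with respect to $S$, and hence bounded.
\item $\sum_k \|x_{k+1}-x_k\|^2 < \infty$.
\item $\sum_k t_k (f(x_{k+1}) - f^\star) \le \|x_0 - x^\star\|^2/2 < \infty$.
\end{itemize}

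\textbf{Step 2: function values converge.} Taking $z = x_k$ in the definition of the prox shows $f(x_{k+1}) + \frac{1}{2t_k}\|x_{k+1}-x_k\|^2 \le f(x_k)$. So $f(x_k)$ is nonincreasing. The sum in Step 1 has nonnegative terms and weights $t_k$ with $\sum t_k = \infty$, so $\liminf_k (f(x_{k+1}) - f^\star) = 0$. Monotonicity then upgrades this to $f(x_k) \downarrow f^\star$. With $t_k = 1/(k+1)$ one also gets the explicit rate
\begin{equation*}
f(x_{K}) - f^\star \le \frac{\|x_0-x^\star\|^2}{2\sum_{k<K} t_k} = \mathcal{O}(1/\log K).
\end{equation*}
This rate is not needed for the lemma.

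\textbf{Step 3: cluster points.} Let $\bar x$ be a cluster point, with $x_{k_j} \to \bar x$. It exists because the sequence is bounded. Lower semicontinuity of $f$ gives $f(\bar x) \le \liminf_j f(x_{k_j}) = f^\star$, so $\bar x \in S$.

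\textbf{Main obstacle.} The key point is that $t_k \to 0$. Because of this we cannot argue via $\|x_{k+1}-x_k\|/t_k \to 0$, i.e.\ via a vanishing subgradient as in fixed-step PPM. The substitute is the divergence $\sum t_k = \infty$ together with the monotonicity of $f(x_k)$ from the descent property, which is why I route the argument through function values rather than fixed-point residuals. I would also remark that Fej\'er monotonicity plus every cluster point lying in $S$ gives full convergence of $x_k$. This is exactly the closedness hypothesis needed to invoke Theorem~\ref{theorem:perturbed_KM} with $T_k = \prox_{t_k(f+g)}$, whose common fixed-point set is $S$ for every $t_k>0$.
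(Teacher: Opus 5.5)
Your proof is correct and follows the paper's argument essentially step for step: the three-point inequality from the prox optimality condition, Fej\'er monotonicity and boundedness, summability of $\sum_k t_k(f(x_{k+1})-f^\star)$, monotone decrease of $f(x_k)$ combined with $\sum_k t_k=\infty$ to force $f(x_k)\to f^\star$, and lower semicontinuity at a cluster point. The differences are cosmetic: you read the descent inequality off the prox definition rather than setting $u=x_k$ in the three-point inequality, and you add square-summable steps and an $\mathcal{O}(1/\log K)$ rate. Your closing remark overreaches slightly, though, because the closedness hypothesis of Theorem~\ref{theorem:perturbed_KM} concerns cluster points of the \emph{perturbed} iterates, which this lemma about exact iterates does not by itself control.
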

\begin{proof}
    For a fixed $t > 0$, let $y = \prox_{tf}(x)$. The optimality condition $\frac{x-y}{t} \in \partial f(y)$ and convexity of $f$ yields
    \begin{eqnarray} \label{eq:subgrad}
        f(u) &\geq& f(y) + \left\langle \frac{x-y}{t}, u-y \right\rangle
    \end{eqnarray}
    for all $u \in \mathbb{R}^n$. Using the identity $2\langle a, b \rangle = \|a\|^2 + \|b\|^2 - \|a-b\|^2$ with $a=x-y$ and $b=u-y$, we obtain the standard three-point inequality,
    \begin{eqnarray} \label{eq:TPI}
        2t(f(y) - f(u)) &\leq& \|x-u\|^2 - \|y-u\|^2 - \|x-y\|^2.
    \end{eqnarray}

    Setting $x = x_k$, $y = x_{k+1}$, and $u \in S$ (so that $f(u) = f^*$) gives
    \begin{eqnarray}
        2t_k(f(x_{k+1}) - f^*) &\leq& \|x_k-u\|^2 - \|x_{k+1}-u\|^2 - \|x_{k+1}-x_k\|^2,
    \end{eqnarray}
    which implies $\|x_{k+1} - u\| \leq \|x_k - u\|$. Thus, the sequence $(x_k)$ is Fejér monotone with respect to $S$ and is therefore bounded and at least one cluster point exists. Summing the inequality over $k$ yields
    \begin{eqnarray} \label{eq:summable}
        \sum_{k=0}^{\infty} t_k (f(x_{k+1}) - f^*) &\leq& \frac{1}{2}\|x_0-u\|^2 < \infty.
    \end{eqnarray}
Setting $u = x_k$ in \eqref{eq:TPI} shows that the function value is non-increasing
    \begin{eqnarray}
        f(x_{k+1}) &\leq& f(x_k) - \frac{1}{2t_k}\|x_{k+1} - x_k\|^2 \amp \leq \amp f(x_k).
    \end{eqnarray}
    Since $f(x_k)$ is decreasing and bounded below by $f^*$, it converges to some limit $L \geq f^*$. We claim $L = f^*$. If $L > f^*$, then $f(x_{k+1}) - f^* \geq L - f^* > 0$. For sufficiently large $k$, this strictly positive gap combined with the divergent step size assumption $\sum t_k = \infty$ (specifically $t_k = 1/(k+1)$) would imply
    \begin{eqnarray}
        \sum_{k=0}^{\infty} t_k(f(x_{k+1}) - f^*) &\geq& (L-f^*) \sum_{k=0}^{\infty} t_k \amp=\amp \infty,
    \end{eqnarray}
    which contradicts the summability condition \eqref{eq:summable}. Therefore, we must have $\lim_{k \to \infty} f(x_k) = f^*$. Finally, let $\bar{x}$ be any cluster point of the sequence, with a subsequence $x_{k_n} \to \bar{x}$. By the lsc of $f$,
    \begin{eqnarray}
        f(\bar{x}) &\leq& \liminf_{n \to \infty} f(x_{k_n}) \amp= \amp \lim_{k \to \infty} f(x_k) \amp = \amp f^*.
    \end{eqnarray}
    Since $f(\bar{x})$ cannot be less than the global minimum $f^*$, we conclude $f(\bar{x}) = f^*$, implying $\bar{x} \in S$.
\end{proof}
The PPM iterates are computed by applying the mapping 
$T_k(x) = \prox_{t_k(f+g)}(x)$. 
By Lemma~\ref{lem: ChangingTPPM} we show that every cluster point lies in $\bigcap_{k \geq 0} \mathrm{Fix}\, T_k$ and it is known that $T_k$ is an averaged operator when $(f+g)$ is convex. 

The HJ-PPM iterates can be written as 
\begin{eqnarray}
    \hat{x}_{k+1} & = & \widehat\prox_{t_k(f+g)}^{\delta_k}(\hat{x}_k) \amp = \amp T_k(\hat{x}_k) + \varepsilon_k,
\end{eqnarray}
where
\begin{eqnarray}
\varepsilon_k & = & \widehat\prox_{t_k(f+g)}^{\delta_k}(\hat{x}_k ) - \prox_{t_k(f+g)}(\hat{x}_k).
\end{eqnarray}

From Theorem ~\ref{thm:hj_prox_error_bound_mc}, we uniformly bound,
\begin{eqnarray}
    P\left(\left\|\varepsilon_k\right\| > \sqrt{\frac{8J_\star M_\star}{\alpha_k N_k}} +\sqrt{nt_k\delta_k}\right) \amp \leq \amp \alpha_k.
\end{eqnarray}
Under assumption ~\ref{assump: changeing t}, 
\begin{eqnarray}
    \sum_{k=0}^\infty \alpha_k < \infty &\text{and}& \sum_{k=0}^\infty \sqrt{\frac{8J_\star M_\star}{\alpha_k N_k}}+\sqrt{nt_k\delta_k}< \infty,
\end{eqnarray}
thus we can invoke Borel-Cantelli, 
\begin{eqnarray}
\sum_{k=0}^\infty \|\varepsilon_k\| &<& \infty \amp \text{almost surely}.
\end{eqnarray}

We have verified all conditions of Theorem~\ref{theorem:perturbed_KM}: each $T_k$ is an averaged operator, the fixed point set $S = \bigcap_{k \geq 0} \mathrm{Fix}\, T_k$ is nonempty, $\sum_{k=0}^\infty \|\varepsilon_k\| < \infty$ a.s., and by Lemma~\ref{lem: ChangingTPPM}, every cluster point of the perturbed sequence lies in $S$. Therefore, by Theorem~\ref{theorem:perturbed_KM} applied pathwise, the sequence $\{\hat{x}_k\}_{k\geq 0}$ converges almost surely to some $x^* \in S = \arg\min(f+g)$.
\end{proof}

\section{HJ-Prox-based PGD Convergence}
For completeness and ease of presentation, we restate the theorem.

\noindent \textbf{Proof of Thm.~\ref{theorem:hjpgd}.}
\textit{\pgdTheorem{app}}
\begin{proof}
For appropriately chosen step-size $t$, the PGD algorithm map is averaged and its fixed points coincide with the global minimizers of $f$ (as shown in the Lemma below).

\begin{lemma}[PGD Diminishing Stepsizes]\label{lem: PGD_Convergence}
Consider $F = f+ g$ where $f$ is convex with $L$-Lipschitz continuous gradient, $g$ is proper, convex and lsc, and the solution set $S = \arg\min F$ is nonempty. The proximal gradient iterations
\begin{eqnarray}
    x_{k+1} &=& \prox_{t_k g}(x_k - t_k \nabla f(x_k))
\end{eqnarray}
with $t_k \leq 1/L$ and $\sum_{k=0}^{\infty} t_k = \infty$ then every cluster point of $(x_k)_{k \geq 0}$ lies in $S$.
\end{lemma}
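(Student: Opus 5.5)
The plan mirrors the proof of Lemma~\ref{lem: ChangingTPPM}: establish a sufficient-decrease / three-point inequality for one proximal-gradient step, deduce Fejér monotonicity with respect to $S$, and then use $\sum_k t_k = \infty$ together with monotone objective values to force $F(x_k)\to F^\star$. Lower semicontinuity then places every cluster point in $S$.

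\textbf{Step 1: a one-step inequality.} Fix $k$, let $y = x_{k+1}$, and take any $u \in \Real^n$. The prox optimality condition gives
\[
\frac{x_k - t_k\nabla f(x_k) - y}{t_k} \in \partial g(y),
\]
so $g(u) \ge g(y) + \langle (x_k-y)/t_k - \nabla f(x_k),\, u-y\rangle$. Convexity of $f$ gives $f(u) \ge f(x_k) + \langle \nabla f(x_k), u - x_k\rangle$. The descent lemma, valid since $\nabla f$ is $L$-Lipschitz and $t_k \le 1/L$, gives
\[
f(y) \le f(x_k) + \langle \nabla f(x_k), y-x_k\rangle + \tfrac{1}{2t_k}\|y-x_k\|^2 .
\]
Adding the three inequalities, the gradient terms cancel. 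Applying the polarization identity $2\langle a,b\rangle = \|a\|^2+\|b\|^2-\|a-b\|^2$ with $a = x_k-y$ and $b = u-y$ then yields
\[
2t_k\bigl(F(x_{k+1}) - F(u)\bigr) \le \|x_k-u\|^2 - \|x_{k+1}-u\|^2 .
\]
This is the analogue of \eqref{eq:TPI}, with the $-\|x_{k+1}-x_k\|^2$ term consumed by the descent lemma.

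\textbf{Step 2: Fejér monotonicity and summability.} Take $u \in S$, so that $F(u)=F^\star$. Step 1 gives $\|x_{k+1}-u\|\le\|x_k-u\|$. Hence $(x_k)$ is bounded and has at least one cluster point. Telescoping gives
\[
\sum_k t_k\bigl(F(x_{k+1})-F^\star\bigr) \le \tfrac12\|x_0-u\|^2 < \infty .
\]

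\textbf{Step 3: monotone objective values.} Now take $u = x_k$ in Step 1, which gives $F(x_{k+1}) \le F(x_k)$. Note that $F(x_k)$ is finite for $k\ge1$, since $x_k \in \operatorname{dom} g$. Therefore $F(x_k)$ decreases to a limit $\ell \ge F^\star$. If $\ell > F^\star$, then
\[
\sum_k t_k\bigl(F(x_{k+1})-F^\star\bigr) \ge (\ell - F^\star)\sum_k t_k = \infty,
\]
which contradicts Step 2. So $F(x_k)\to F^\star$.

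\textbf{Step 4: cluster points.} Let $x_{k_j}\to\bar x$. Since $f$ is continuous and $g$ is lsc, $F$ is lsc, so
\[
F(\bar x) \le \liminf_j F(x_{k_j}) = F^\star .
\]
Hence $\bar x \in S$.

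\textbf{Main obstacle.} I expect the delicate point to be Step 1, specifically obtaining the three-point inequality with $F(u)$ on the right for an \emph{arbitrary} $u$ rather than only $u\in S$. This requires combining the descent lemma at $x_k$ with convexity at $u$ correctly, and it is exactly where $t_k\le 1/L$ enters. Step 3 uses $u=x_k$, so the inequality must hold in this generality. Once Step 1 is in place, the remaining steps follow the PPM argument verbatim.
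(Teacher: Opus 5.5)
Your proof is correct and follows the paper's argument step for step: the three-point inequality, Fej\'er monotonicity with summability of $t_k(F(x_{k+1})-F^\star)$, monotone objective values combined with $\sum_k t_k=\infty$, and lower semicontinuity at cluster points. The only differences are cosmetic. You absorb the $\|x_{k+1}-x_k\|^2$ term by using $L\le 1/t_k$ directly in the descent lemma, whereas the paper carries $(1-Lt_k)\|x_{k+1}-x_k\|^2$ and drops it later. You also note explicitly that $F(x_k)<\infty$ for $k\ge 1$, which the paper leaves implicit.
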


\begin{proof}
Let $u\in\mathbb{R}^n$. Using the proximal optimality condition for $g$ at
$x_{k+1}=\prox_{t_k g}(x_k-t_k\nabla f(x_k))$, convexity of $f$, and the $L$-smoothness of $f$,
we obtain the standard proximal-gradient three-point inequality
\begin{eqnarray}\label{eq:PGD_TPI}
    2t_k(F(x_{k+1}) - F(u))
    &\leq & \|x_k-u\|^2 - \|x_{k+1}-u\|^2 - (1-Lt_k)\|x_{k+1}-x_k\|^2 .
\end{eqnarray}
Taking $u \in S$ (so $F(u)=F^*$) and using $t_k\le 1/L$ gives
\begin{eqnarray}
2t_k(F(x_{k+1})-F^*) &\le& \|x_k-u\|^2-\|x_{k+1}-u\|^2,
\end{eqnarray}
hence $\|x_{k+1}-u\|\le \|x_k-u\|$. Thus $(x_k)$ is Fej\'er monotone with respect to $S$,
hence bounded with at least one cluster point. Summing \eqref{eq:PGD_TPI} over $k$ and dropping the
nonnegative term $(1-Lt_k)\|x_{k+1}-x_k\|^2$ yields
\begin{eqnarray}\label{eq:summable_pgd}
    \sum_{k=0}^{\infty} t_k(F(x_{k+1}) - F^*) &\leq& \frac{1}{2}\|x_0-u\|^2 < \infty.
\end{eqnarray}

We take $u=x_k$ in \eqref{eq:PGD_TPI} 
\begin{eqnarray}
    2t_k(F(x_{k+1})-F(x_k)) &\le& -(1-Lt_k)\|x_{k+1}-x_k\|^2 \le 0,
\end{eqnarray}

so $F(x_k)$ is non-increasing, hence converges to some $L \geq F^*$. If $L > F^*$, then
$F(x_{k+1}) - F^* \geq L - F^* > 0$ for all sufficiently large $k$, which combined with
$\sum_{k=0}^\infty t_k=\infty$ implies
\begin{eqnarray}
\sum_{k=0}^{\infty} t_k(F(x_{k+1}) - F^*) &\geq& (L-F^*)\sum_{k=0}^{\infty} t_k \amp=\amp \infty,
\end{eqnarray}
contradicting \eqref{eq:summable_pgd}. Thus $\lim_{k \to \infty} F(x_k) = F^*$.

Finally, let $\bar{x}$ be any cluster point with $x_{k_n} \to \bar{x}$. Since $f$ is continuous and
$g$ is lsc, $F=f+g$ is lsc, hence
\begin{eqnarray}
    F(\bar{x}) &\leq& \liminf_{n \to \infty} F(x_{k_n}) \amp=\amp F^*,
\end{eqnarray}

which implies $\bar{x} \in S$.
\end{proof}

\begin{lemma}[Averagedness and Fixed Points of PGD]\label{lem: PGD fixed-point}
Let $0<t<\frac{2}{L}$ and define, for $x \in \Real^n$
\begin{eqnarray}
T(x) & = & \prox_{t g}\bigl(x - t\nabla f(x)\bigr)
.
\end{eqnarray}
Then $T$ is an averaged operator, and its fixed points $ \text{Fix} (T)$ coincide with $f+g$'s global minimizers $X^*$ \cite{ParikhBoyd2014} section 4.2.
\end{lemma}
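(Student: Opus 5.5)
The lemma asserts that the forward--backward map $T = \prox_{tg}\circ(I - t\nabla f)$ is averaged and that $\mathrm{Fix}(T)$ equals the set $X^*$ of minimizers of $f+g$. I will prove the two claims separately. Averagedness will follow from composing two averaged operators. The fixed-point characterization will follow from the optimality condition for the prox together with the subdifferential sum rule.

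\textbf{Step 1 (averagedness).} First, $\prox_{tg}$ is firmly nonexpansive because $g$ is proper, LSC and convex; equivalently, it is $\tfrac12$-averaged. Second, the gradient step is averaged for $0<t<2/L$. By the Baillon--Haddad theorem, $\nabla f$ is $\tfrac1L$-cocoercive, since $f$ is convex with $L$-Lipschitz gradient. Hence $\tfrac{2}{L}\nabla f$ is firmly nonexpansive, and $I-\tfrac{2}{L}\nabla f$ is nonexpansive. Writing
\begin{equation*}
I - t\nabla f = \left(1-\tfrac{tL}{2}\right) I + \tfrac{tL}{2}\left(I - \tfrac{2}{L}\nabla f\right)
\end{equation*}
exhibits it as $\tfrac{tL}{2}$-averaged, with $\tfrac{tL}{2}\in(0,1)$. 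Finally, a composition of two averaged operators is averaged. The standard constant (Ogura--Yamada / Combettes--Yamada) for operators with parameters $\alpha_1,\alpha_2$ is
\begin{equation*}
\alpha = \frac{\alpha_1+\alpha_2-2\alpha_1\alpha_2}{1-\alpha_1\alpha_2},
\end{equation*}
which lies in $(0,1)$. Applying it with $\alpha_1=\tfrac12$ and $\alpha_2=\tfrac{tL}{2}$ shows that $T$ is averaged.

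\textbf{Step 2 (fixed points).} Since $\prox_{tg}(y)$ is the unique minimizer of a strongly convex function, $p=\prox_{tg}(y)$ holds if and only if $(y-p)/t\in\partial g(p)$. Take $y = x - t\nabla f(x)$ and $p=x$. Then $x=T(x)$ is equivalent to $-\nabla f(x)\in\partial g(x)$, that is, $0\in\nabla f(x)+\partial g(x)$. Because $f$ is differentiable and finite everywhere, the sum rule $\partial(f+g)=\nabla f+\partial g$ holds with no constraint qualification. So the condition is equivalent to $0\in\partial(f+g)(x)$. By Fermat's rule for the convex function $f+g$, this holds exactly when $x\in X^*$. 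This gives $\mathrm{Fix}(T)=X^*$.

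The only step requiring care is the averagedness of $I-t\nabla f$, which relies on cocoercivity via Baillon--Haddad. If a self-contained argument is preferred over citing \cite{ParikhBoyd2014}, I would verify the composition constant directly from the characterization of $\alpha$-averaged maps:
\begin{equation*}
\|Tx-Ty\|^2 \le \|x-y\|^2 - \tfrac{1-\alpha}{\alpha}\|(I-T)x-(I-T)y\|^2 .
\end{equation*}
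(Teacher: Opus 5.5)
Your argument is correct and is the standard proof behind the paper's citation of Parikh--Boyd \S4.2; the paper gives no argument beyond that reference. Prox optimality plus the smooth sum rule gives $\mathrm{Fix}(T)=X^*$, and composing the $\tfrac12$-averaged $\prox_{tg}$ with the $\tfrac{tL}{2}$-averaged forward step yields averagedness with constant $\tfrac{2}{4-tL}$.

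One slip needs correcting. Baillon--Haddad makes $\tfrac1L\nabla f$ firmly nonexpansive, not $\tfrac2L\nabla f$. Then $I-\tfrac2L\nabla f = I-2\bigl(\tfrac1L\nabla f\bigr)$ is nonexpansive as the corresponding reflection, so your conclusion stands.
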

The PGD iterates are computed by applying the mapping 
$T_k(x) = \prox_{t_kg}(x - t_k\nabla f(x))$.
By Lemma~\ref{lem: PGD fixed-point} and \ref{lem: PGD_Convergence}, all $T_k$ is an averaged operator and $x_k \rightarrow x^* \in \bigcap_{k \geq 0} \mathrm{Fix}\, T_k  =  \mathnormal{X}^*$. 

The HJ-PGD iterates can be written as 
\begin{eqnarray}
    \hat{x}_{k+1} & = & \widehat\prox_{t g}^{\delta_k}(\hat{x}_k - t_k \nabla f(\hat{x}_k)) \amp = \amp T_k(\hat{x}_k) + \varepsilon_k,
\end{eqnarray}
where
\begin{eqnarray}
\varepsilon_k & = & \widehat\prox_{t_k g}^{\delta_k}(\hat{x}_k - t_k \nabla f(\hat{x}_k)) - \prox_{t_kg }(\hat{x}_k - t_k \nabla f(\hat{x}_k) ).
\end{eqnarray}
From Theorem~\ref{thm:hj_prox_error_bound_mc}, we have the uniform bound
\begin{eqnarray}
    P\left(\left\|\varepsilon_k\right\| > \sqrt{\frac{8J_\star M_\star}{\alpha_k N_k}} +\sqrt{nt_k\delta_k}\right) \amp \leq \amp \alpha_k.
\end{eqnarray}

Under Assumption~\ref{assump: changeing t}, we have
\begin{eqnarray}
    \sum_{k=0}^\infty \alpha_k < \infty &\text{and}& \sum_{k=0}^\infty \left(\sqrt{\frac{8J_\star M_\star}{\alpha_k N_k}}+\sqrt{nt_k\delta_k}\right)< \infty.
\end{eqnarray}

Thus, by Borel-Cantelli, 
\begin{eqnarray}
\sum_{k=0}^\infty \|\varepsilon_k\| &<& \infty \amp \text{almost surely}.
\end{eqnarray}

We have now verified all conditions of Theorem~\ref{theorem:perturbed_KM}. By Lemma~\ref{lem: PGD fixed-point}, $T$ is an averaged operator for step sizes $0 < t_k < 2/L$, and the fixed point set $S = \mathrm{Fix}\, T = \arg\min(f+g)$ is nonempty by assumption. $\sum_{k=0}^\infty \|\varepsilon_k\| < \infty$ almost surely and by Lemma~\ref{lem: PGD_Convergence}, every cluster point of the perturbed sequence lies in $S$. Therefore, by Theorem~\ref{theorem:perturbed_KM} applied pathwise, the sequence $\{\hat{x}_k\}_{k\geq 0}$ converges almost surely to some $x^* \in S = \arg\min(f+g)$.
\end{proof}

\section{HJ-Prox-based DRS Convergence}
We restate the statement of the theorem for readability.

\noindent \textbf{Proof of Thm.~\ref{theorem:hjdrs}.}
\textit{\drsTheorem{app}}
\begin{proof}
The DRS algorithm map is averaged and its fixed points coincide with the global minimizers of $f+g$.

\begin{lemma}[Averagedness and Fixed Points of DRS]\label{lem: DR-fixed-point}
Let $t>0$ and define, for $z\in\Real^n$
\begin{eqnarray}
\label{eq:DRS_update}
T(z) & = & z + \prox_{t g}\!\bigl(2\,\prox_{t f}(z) - z\bigr) - \prox_{t f}(z).
\end{eqnarray}
Note this is the fixed point operator for the dual variable in the DRS algorithm.
Then $T$ is firmly nonexpansive (hence averaged), and
\begin{eqnarray}
\text{Fix}(T) & = & \{ z: \prox_{t f}(z) \in \mathnormal{Z}^* \,\}. 
\end{eqnarray} \cite{lions1979splitting} (remark 5). 
\end{lemma}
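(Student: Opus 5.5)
The statement to prove is Lemma~\ref{lem: DR-fixed-point}. It has two parts: firm nonexpansiveness of the DRS map $T$, and the characterization of $\mathrm{Fix}(T)$. We read $\mathnormal{Z}^*$ as $\arg\min(f+g)$.

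\textbf{Firm nonexpansiveness.} Write $R_{tf} = 2\prox_{tf} - I$ and $R_{tg} = 2\prox_{tg} - I$ for the reflected resolvents. Since $f$ and $g$ are proper, LSC and convex, $\partial f$ and $\partial g$ are maximally monotone. Hence $\prox_{tf} = (I + t\partial f)^{-1}$ and $\prox_{tg}$ are firmly nonexpansive, and $R_{tf}$, $R_{tg}$ are nonexpansive. Expanding $R_{tg}R_{tf}(z)$ gives
\[
R_{tg}R_{tf}(z) = 2\prox_{tg}\bigl(2\prox_{tf}(z) - z\bigr) - 2\prox_{tf}(z) + z,
\]
so
\[
\tfrac12\bigl(I + R_{tg}R_{tf}\bigr)(z) = z + \prox_{tg}\bigl(2\prox_{tf}(z)-z\bigr) - \prox_{tf}(z) = T(z).
\]
A composition of nonexpansive maps is nonexpansive. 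The $\tfrac12$-average of the identity with a nonexpansive map is firmly nonexpansive, so $T$ is firmly nonexpansive and in particular averaged, which is the class required by Theorem~\ref{theorem:perturbed_KM}.

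\textbf{Fixed points.} Let $x = \prox_{tf}(z)$. Then $T(z) = z$ if and only if $\prox_{tg}(2x - z) = x$.
\begin{itemize}
\item By the resolvent characterization, $x = \prox_{tf}(z)$ means $z - x \in t\partial f(x)$.
\item Likewise, $\prox_{tg}(2x-z) = x$ means $x - z \in t\partial g(x)$.
\end{itemize}
Adding these gives $0 \in \partial f(x) + \partial g(x) \subseteq \partial(f+g)(x)$, so $x \in \mathnormal{Z}^*$.

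For the converse, take $x \in \mathnormal{Z}^*$. To produce $z$ we need to split $0 \in \partial(f+g)(x)$ as $0 = u + v$ with $u \in \partial f(x)$ and $v \in \partial g(x)$. Then $z = x + tu$ gives $\prox_{tf}(z) = x$ and $\prox_{tg}(2x - z) = \prox_{tg}(x - tu) = x$, since $-u = v \in \partial g(x)$. This shows $T(z) = z$.

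\textbf{Main obstacle.} The delicate step is the sum rule $\partial(f+g) = \partial f + \partial g$. Under the theorem's hypotheses it is immediate: $f$ and $g$ are real-valued and $L$-Lipschitz, so $\mathrm{dom} f = \mathrm{dom}\, g = \Real^n$ and the standard qualification condition (relative interiors of the domains intersect) holds. Without that condition, the fixed-point set should be read as $\prox_{tf}(\mathrm{Fix}\,T) = \mathrm{zer}(\partial f + \partial g)$, which is contained in $\mathnormal{Z}^*$. With it in hand, the set identity follows, and nonemptiness of $\mathrm{Fix}(T)$ reduces to nonemptiness of $\arg\min(f+g)$, as needed for Theorem~\ref{theorem:perturbed_KM}.
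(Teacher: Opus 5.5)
Your firm-nonexpansiveness argument is correct: $2T-I=R_{tg}R_{tf}$ is a composition of nonexpansive reflections. So is your forward inclusion: $T(z)=z$ gives $z-x\in t\partial f(x)$ and $x-z\in t\partial g(x)$ with $x=\prox_{tf}(z)$, hence $x\in Z^*$. This is the standard argument behind the Lions--Mercier citation, which is all the paper offers as proof.

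\textbf{The gap is in your converse.} What you prove there is that every $x\in Z^*$ equals $\prox_{tf}(z)$ for \emph{some} fixed point $z$, i.e.\ $Z^*\subseteq\prox_{tf}(\mathrm{Fix}\,T)$. The displayed identity needs something different: that \emph{every} $z$ with $\prox_{tf}(z)\in Z^*$ is a fixed point.

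That inclusion is false, even when the sum rule holds. Take $n=1$, $f(x)=|x|$, $g\equiv 0$; both are real-valued, convex and Lipschitz. Then $\prox_{tg}=I$, so $T(z)=z+(2\prox_{tf}(z)-z)-\prox_{tf}(z)=\prox_{tf}(z)$. Hence $\mathrm{Fix}(T)=\arg\min f=\{0\}=Z^*$, whereas soft thresholding gives $\{z:\prox_{tf}(z)\in Z^*\}=[-t,t]$. A less degenerate choice, $g=\tfrac12|\cdot|$, gives $\mathrm{Fix}(T)=[-t/2,t/2]$ against the same preimage $[-t,t]$.

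Your own parametrization shows the general picture. You have $\mathrm{Fix}(T)=\bigcup_{x\in Z^*}\bigl(x+t(\partial f(x)\cap(-\partial g(x)))\bigr)$, while the preimage is $\bigcup_{x\in Z^*}\bigl(x+t\,\partial f(x)\bigr)$. These agree only if $\partial f(x)\subseteq-\partial g(x)$ for all $x\in Z^*$.

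So the sentence ``with it in hand, the set identity follows'' cannot be repaired. What your argument actually establishes is $\mathrm{Fix}(T)\subseteq\{z:\prox_{tf}(z)\in Z^*\}$ together with $\prox_{tf}(\mathrm{Fix}\,T)=Z^*$ under the qualification condition. That is the standard form of the result, and you should state explicitly that the lemma's equality overstates it.

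This corrected version is all the paper's DRS convergence proof uses. It gives nonemptiness of $\mathrm{Fix}(T)$ whenever $X^*\neq\emptyset$, and $\prox_{tf}(z^*)\in X^*$ for the limit $z^*$ of the iterates.
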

By Lemma~\ref{lem: DR-fixed-point}, $z_k \rightarrow z^*$ and $\prox(z^*) = x^* \in \mathnormal{X}^*$. 
We can express the HJ-DRS update in terms of the DRS algorithm map $T$ \eqref{eq:DRS_update}.
\begin{eqnarray}
    \hat{z}_{k+1}
    &=& T(\hat{z}_k) + \varepsilon_k,
\end{eqnarray}
where
\begin{eqnarray}
\varepsilon_k & = & \prox_{t g}(w_k + 2\kappa_k) - \prox_{t g}(w_k) +\zeta_k -\kappa_k,
\\
w_k & = & 2\prox_{t f}(\hat{z}_k) - \hat{z}_k,
\end{eqnarray}
and
\begin{eqnarray}
    \zeta_k & = & \widehat\prox^{\delta_k}_{t g}(\hat{z}_k) - \prox_{t g}(\hat{z}_k) \\
    \kappa_k & = & \widehat\prox^{\delta_k}_{t f}(\hat{z}_k) - \prox_{t f}(\hat{z}_k).
\end{eqnarray}
We have the following bound
\begin{eqnarray}\label{eq: DRS Error Bound}
    \|\varepsilon_k\| & \leq & \|w_k+ 2\kappa_k -w_k\| + \|\zeta_k\| +\|\kappa_k\| \amp = \amp  3\|\kappa_k\| + \|\zeta_k\|,
\end{eqnarray}
which follows from the triangle inequality and the fact that proximal mappings are nonexpansive. From Theorem~\ref{thm:hj_prox_error_bound_mc}, we have the uniform bound

\begin{eqnarray}
    P\left(\left\|\zeta_k\right\| > \sqrt{\frac{8J_\star M_\star}{\alpha_k N_k}} +\sqrt{nt_k\delta_k}\right) \amp \leq \amp \alpha_k,\\
    P\left(\left\|\kappa_k\right\| > \sqrt{\frac{8J_\star M_\star}{\alpha_k N_k}} +\sqrt{nt_k\delta_k}\right) \amp \leq \amp \alpha_k.
\end{eqnarray}
Under Assumption~\ref{assump: fixed t}, we have
\begin{eqnarray}
    \sum_{k=0}^\infty \alpha_k < \infty &\text{and}& \sum_{k=0}^\infty \left(\sqrt{\frac{8J_\star M_\star}{\alpha_k N_k}}+\sqrt{nt\delta_k}\right)< \infty.
\end{eqnarray}
Thus, by Borel-Cantelli, 
\begin{eqnarray}
\sum_{k=0}^\infty \|\zeta_k\| &<& \infty  \amp\text{and}\amp   \sum_{k=0}^\infty \|\kappa_k\| \amp < \amp \infty  \amp \text{almost surely}.
\end{eqnarray}
By equation ~(\ref{eq: DRS Error Bound}),
\begin{eqnarray}
    \sum_{k=0}^\infty \|\varepsilon_k\| &<& \infty \amp \text{almost surely}.
\end{eqnarray}

We have verified all conditions of Theorem~\ref{theorem:perturbed_KM}. By Lemma~\ref{lem: DR-fixed-point}, $T$ is firmly nonexpansive with nonempty fixed point set $S = \mathrm{Fix}(T) = \{z : \prox_{tf}(z) \in X^*\}$, where $X^* = \arg\min(f+g)$. We have shown $\sum_{k=0}^\infty \|\varepsilon_k\| < \infty$ almost surely. Since the step size $t$ is fixed, we have a constant operator $T$ across all iterations, and the demiclosedness of $T - \mathrm{Id}$ at zero ensures every cluster point lies in $S$~\cite{Combettes2001}. Therefore, by Theorem~\ref{theorem:perturbed_KM}, $\hat{z}_k \to z^* \in S$ almost surely. Since proximal maps are continuous, $\hat{x}_k = \widehat{\prox}_{tf}^{\delta_k}(\hat{z}_k) \to \prox_{tf}(z^*) = x^* \in X^*$ almost surely.
\end{proof}

\section{HJ-Prox-based DYS Convergence}
For completeness and ease of presentation, we restate the theorem.

\noindent \textbf{Proof of Thm.~\ref{theorem:hjdys}.}
\textit{\dysTheorem{app}}
\begin{proof}
For appropriately chosen step-size $t$, the DYS algorithm map is averaged and its fixed points coincide with the global minimizers of $f+g+h$.
\begin{lemma}[Averagedness and Fixed Points of DYS]\label{lem: DYS-fixed-point}
Let $t > 0$ and define, for $z \in \Real^n$,
\begin{eqnarray}
\label{eq:DYS_update}
    T(z) &=& z - \prox_{t f}(z) + \prox_{t g}\bigl(2\prox_{t f}(z)-z-t\nabla h(\prox_{t f}(z))\bigr).
\end{eqnarray}
Note this is the fixed point operator for the DYS algorithm and its fixed points Fix(T) coincide with global minimizers $X^*$. T is firmly nonexpansive (hence averaged), and 
\begin{eqnarray}
    Fix(T) &=& \{z : x \in X^\star\},
\end{eqnarray}\cite{DavisYin2015}(Theorem 3.1).
\end{lemma}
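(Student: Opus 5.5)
Throughout, I write $A=\partial f$, $B=\partial g$ and $C=\nabla h$, and I use the standard Davis–Yin decomposition. For $z\in\Real^n$ set
$x_f(z)=\prox_{tf}(z)$ and $x_g(z)=\prox_{tg}\bigl(2x_f(z)-z-t\nabla h(x_f(z))\bigr)$, so that $T(z)=z-x_f(z)+x_g(z)$. The proof has two independent parts: characterizing $\mathrm{Fix}(T)$ through subgradient optimality conditions, and proving averagedness through an inner-product identity.

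\textbf{Fixed points.} First, $z\in\mathrm{Fix}(T)$ if and only if $x_f(z)=x_g(z)=:x$. Fermat's rule for the two proximal steps gives
\begin{equation*}
z-x\in t\,\partial f(x), \qquad 2x-z-t\nabla h(x)-x\in t\,\partial g(x).
\end{equation*}
Adding these two inclusions gives $0\in t(\partial f(x)+\partial g(x)+\nabla h(x))\subseteq t\,\partial(f+g+h)(x)$, so $x\in X^\star$.

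For the converse, take $x\in X^\star$. Because $f,g,h$ are finite-valued and Lipschitz on $\Real^n$, the subdifferential sum rule holds with equality. We can therefore pick $u\in\partial f(x)$ and $v\in\partial g(x)$ with $u+v+\nabla h(x)=0$. Define $z=x+tu$. Then $\prox_{tf}(z)=x$, and $2x-z-t\nabla h(x)=x+tv$, whose $\prox_{tg}$ is again $x$. Hence $z\in\mathrm{Fix}(T)$. This proves $\mathrm{Fix}(T)=\{z:\prox_{tf}(z)\in X^\star\}$, and in particular the fixed-point set is nonempty whenever $X^\star$ is.

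\textbf{Averagedness.} Take $z_1,z_2$ and abbreviate $x_{f,i}=x_f(z_i)$, $x_{g,i}=x_g(z_i)$, and $\Delta$ for differences between the two indices. I will expand $\|Tz_1-Tz_2\|^2$ and $\|(I-T)z_1-(I-T)z_2\|^2$, and use three facts:
\begin{itemize}[nosep,leftmargin=*]
\item firm nonexpansiveness of $\prox_{tf}$: $\langle\Delta x_f,\Delta z\rangle\ge\|\Delta x_f\|^2$;
\item firm nonexpansiveness of $\prox_{tg}$ applied to its argument $2x_f-z-t\nabla h(x_f)$;
\item the Baillon–Haddad theorem, which says $\nabla h$ is $\tfrac{1}{L'}$-cocoercive.
\end{itemize}
Following Davis–Yin (Prop. 2.1), these combine into
\begin{equation*}
\|Tz_1-Tz_2\|^2\le\|z_1-z_2\|^2-\Bigl(1-\tfrac{tL'}{2}\Bigr)\|(I-T)z_1-(I-T)z_2\|^2 .
\end{equation*}
The term $2t\langle\Delta x_g-\Delta x_f,\Delta\nabla h(x_f)\rangle$ produced by the expansion is handled by cocoercivity together with Young's inequality. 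For $0<t<2/L'$ the coefficient $1-\tfrac{tL'}{2}$ is positive, so $T$ is $\alpha$-averaged with $\alpha=\tfrac{2}{4-tL'}\in(\tfrac12,1)$.

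\textbf{Expected obstacle.} The main difficulty is the bookkeeping in the averagedness inequality: choosing the Young's parameter so that the cocoercivity term exactly absorbs the cross term. Strictly speaking, the argument gives averagedness rather than firm nonexpansiveness. Firm nonexpansiveness holds only when $\nabla h\equiv 0$, which is the DRS case. Averagedness is all that Theorem~\ref{theorem:perturbed_KM} requires, so I would state and use that. Because $t$ is fixed, $T$ is the same at every iteration, and demiclosedness of $I-T$ at $0$ then supplies the cluster-point condition, exactly as in the DRS proof.
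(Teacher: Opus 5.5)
Your route is the paper's route made explicit. The paper just cites Davis--Yin, while you derive the fixed-point encoding from Fermat's rule and invoke their Prop.~2.1 for averagedness. You are right that the lemma's ``firmly nonexpansive'' claim is too strong: $T$ is only $\tfrac{2}{4-tL'}$-averaged, and only for $0<t<2/L'$.

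The one real error is the sentence ``This proves $\mathrm{Fix}(T)=\{z:\prox_{tf}(z)\in X^\star\}$.'' Your forward direction gives only the inclusion $\mathrm{Fix}(T)\subseteq\{z:\prox_{tf}(z)\in X^\star\}$. Your converse shows only that each $x\in X^\star$ equals $\prox_{tf}(z)$ for one specially constructed $z=x+tu$. It never shows that an arbitrary $z$ with $\prox_{tf}(z)\in X^\star$ is a fixed point, and that claim is false. Take $n=1$, $g=h=0$, $f=|\cdot|$, $t=1$, which satisfies all the hypotheses. Then $T=\prox_{f}$ is soft-thresholding and $\mathrm{Fix}(T)=\{0\}$, yet $\prox_f(z)=0\in X^\star$ for every $z\in[-1,1]$.

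What your two directions actually prove is the correct Davis--Yin encoding $\prox_{tf}(\mathrm{Fix}\,T)=X^\star$. More precisely, $\mathrm{Fix}(T)=\{x+tu: u\in\partial f(x),\ -u-\nabla h(x)\in\partial g(x)\}$, and every such $x$ lies in $X^\star$; your forward argument with $u=(z-x)/t$ shows every fixed point has this form. This is all the downstream convergence argument needs: $\mathrm{Fix}(T)\neq\emptyset$, and for a limit $z^\star\in\mathrm{Fix}(T)$, $\prox_{tf}(z^\star)\in X^\star$. So state your conclusion in this form rather than as the set equality. The paper has the same imprecision, both in its DRS lemma and in the DYS theorem's identification $S=\{z:z\in X^\star\}$.

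A smaller point: your remark that firm nonexpansiveness holds ``only when $\nabla h\equiv 0$'' is overstated. With $f=g=0$ and $t\le 1/L'$, $T=I-t\nabla h$ is firmly nonexpansive by cocoercivity. The accurate statement is that it fails in general. For example, with $f=g=0$, $h=\tfrac{L'}{2}\|\cdot\|^2$ and $t\in(1/L',2/L')$, one gets $T=(1-tL')I$, which is averaged but not firmly nonexpansive.
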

By Lemma ~\ref{lem: DYS-fixed-point}, $z_k \rightarrow z^\star$ and $z^* \in \mathnormal{X}^*$. We can express the HJ-DYS update in terms of DYS algorithm map $T$ \eqref{eq:DYS_update}.
\begin{eqnarray}
    \hat{z}_{k+1} &=& T(\hat{z}_k) + \varepsilon_k,
\end{eqnarray}
where 
\begin{eqnarray}
    \varepsilon_k &=& \prox_{tg}\big({S}_t(z_k) + d_k\big) - \prox_{t g}\big({S}_t(z_k)\big) + \zeta_k - \kappa_k\\
    {S}_t(z_k) &=& 2\prox_{t f}(z_k) - z_k - t \nabla h\bigl(\prox_{tf}(z_k)\bigr)\\
    d_k &=& 2 \kappa_k - t [\nabla h \bigl(\prox_{tf}(z_k) +\kappa_k\bigr)- \nabla h \bigl(\prox_{t g}(z_k)\bigr)]
\end{eqnarray}
and 
\begin{eqnarray}
    \zeta_k &=& \widehat\prox^{\delta_k}_{t g}(\hat{z}_k) - \prox_{t g}(\hat{z}_k) \\
    \kappa_k &=& \widehat\prox^{\delta_k}_{t f}(\hat{z}_k) - \prox_{t f}(\hat{z}_k).
\end{eqnarray}
We have the following bound

\begin{eqnarray}\label{eq: DYS Error Bound}
    \|\varepsilon_k\|  & \leq &  (1+ t L)\|\kappa_k\| + \|\zeta_k\|, 
\end{eqnarray}
which follows from the triangle inequality, $L$-smoothness of $h$, and the fact that proximal mappings are nonexpansive. 
From Theorem~\ref{thm:hj_prox_error_bound_mc}, we have the uniform bound

\begin{eqnarray}
    P\left(\left\|\zeta_k\right\| > \sqrt{\frac{8J_\star M_\star}{\alpha_k N_k}} +\sqrt{nt_k\delta_k}\right) \amp \leq \amp \alpha_k,\\
    P\left(\left\|\kappa_k\right\| > \sqrt{\frac{8J_\star M_\star}{\alpha_k N_k}} +\sqrt{nt_k\delta_k}\right) \amp \leq \amp \alpha_k.
\end{eqnarray}
Under Assumption~\ref{assump: fixed t}, we have
\begin{eqnarray}
    \sum_{k=0}^\infty \alpha_k < \infty &\text{and}& \sum_{k=0}^\infty \left(\sqrt{\frac{8J_\star M_\star}{\alpha_k N_k}}+\sqrt{nt\delta_k}\right)< \infty.
\end{eqnarray}
Thus, by Borel-Cantelli, 
\begin{eqnarray}
\sum_{k=0}^\infty \|\zeta_k\| &<& \infty  \amp\text{and}\amp   \sum_{k=0}^\infty \|\kappa_k\| \amp < \amp \infty  \amp \text{almost surely}.
\end{eqnarray}
By equation ~(\ref{eq: DYS Error Bound}),
\begin{eqnarray}
    \sum_{k=0}^\infty \|\varepsilon_k\| &<& \infty \amp \text{almost surely}.
\end{eqnarray}
We have verified all conditions of Theorem~\ref{theorem:perturbed_KM}. By Lemma~\ref{lem: DYS-fixed-point}, $T$ is firmly nonexpansive with nonempty fixed point set $S = \mathrm{Fix}(T) = \{z : z \in X^*\}$, where $X^* = \arg\min(f+g+h)$. We have shown $\sum_{k=0}^\infty \|\varepsilon_k\| < \infty$ almost surely. Since the step size $t$ is fixed, we have a constant operator $T$ across all iterations, and the demiclosedness of $T - \mathrm{Id}$ at zero ensures every cluster point lies in $S$~\cite{Combettes2001}. Therefore, by Theorem~\ref{theorem:perturbed_KM}, $\hat{z}_k \to z^* \in S = X^*$ almost surely.
\end{proof} 

\section{HJ-Prox-based PDHG Convergence}
For completeness and ease of presentation, we restate the theorem.

\noindent \textbf{Proof of Thm.~\ref{theorem:hjpdhg}.}
\textit{\pdhgTheorem{app}}
\begin{proof}
For appropriately chosen $\tau,\sigma$ the PDHG algorithm map is averaged and its fixed points corresponding to $x_k$ updates coincide with the global minimizers of $f(x)+g(Ax)$.
\begin{lemma}[Averagedness and Fixed Points of PDHG]\label{lem: PDHG-fixed-point}
Let $\tau,\sigma > 0$ satisfying $\tau \sigma \|A\|^2 < 1$ and define, for $z \in \Real^n$ and $w \in \Real^m$ 
\begin{eqnarray}
\label{eq:PDHG_update}
    T(z,w) = \begin{bmatrix}
\prox_{\tau f}\!\big(z - \tau A^\top \prox_{\sigma g^*}(w + \sigma Az)\big) \\
\prox_{\sigma g^*}(w + \sigma Az)
\end{bmatrix}.
\end{eqnarray}
Let $V = \mathrm{diag}(\tfrac{1}{\tau}I_n,\, \tfrac{1}{\sigma}I_m)$. On a product space with a weighted inner product
$\langle (x,y),(x',y')\rangle_V = \tfrac{1}{\tau}\langle x,x'\rangle + \tfrac{1}{\sigma}\langle y,y'\rangle$,
the map T is an averaged operator. 
Note this is the fixed point operator for the PDHG algorithm and its fixed points Fix(T) coincide with the set of primal-dual KKT saddle points for $f(x)+g(Ax)$, where the primal point coincides with the global minimizers $X^*$. T is firmly nonexpansive (hence averaged), and 
\begin{eqnarray}
    Fix(T) = \{(z^*,w^*) : z^* \in X^\star\}
\end{eqnarray}\cite{ChambollePock2011}(Algorithm 1, Thm. 1) \cite{Fercoq2022}(Lemma 2).
\end{lemma}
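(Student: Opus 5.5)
The statement to prove is Lemma~\ref{lem: PDHG-fixed-point}: averagedness of the PDHG map $T$ in the $V$-weighted geometry, and the identification of $\mathrm{Fix}(T)$ with primal-dual saddle points whose primal component lies in $X^*$. I would follow the standard preconditioned proximal point interpretation of PDHG. I would use the PDHG preconditioner rather than the diagonal $V$ itself, since the cross terms are essential. Write $u=(z,w)$ and $u^+=T(u)=(z^+,w^+)$. The resolvent optimality conditions are
\[
\frac{1}{\sigma}(w - w^+) + Az \in \partial g^*(w^+), \qquad \frac{1}{\tau}(z - z^+) - A^\top w^+ \in \partial f(z^+).
\]
Substituting $Az = Az^+ + A(z - z^+)$ rearranges these into the single inclusion
\[
0 \in \mathcal{A}(u^+) + P(u^+ - u),
\]
with $\mathcal{A}(z,w) = \bigl(\partial f(z) + A^\top w,\; \partial g^*(w) - Az\bigr)$ maximally monotone, since it is a sum of a subdifferential and a skew linear operator, and
\[
P = \begin{bmatrix} \tfrac{1}{\tau}I & -A^\top \\ -A & \tfrac{1}{\sigma} I\end{bmatrix}.
\]
Hence $T = (P+\mathcal{A})^{-1}P$.

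The first key step is to check that $P$ is symmetric positive definite exactly when $\tau\sigma\|A\|^2<1$. I would do this with a Schur complement: $\tfrac{1}{\sigma}I - \tau AA^\top \succ 0$ iff $\tau\sigma\|A\|^2 < 1$.

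Then $T$ is the resolvent $J_{P^{-1}\mathcal{A}}$ in the Hilbert space with inner product $\langle\cdot,\cdot\rangle_P$. Since $P^{-1}\mathcal{A}$ is maximally monotone in that geometry, $T$ is firmly nonexpansive there, hence $\tfrac12$-averaged. I would quote this from \cite{Fercoq2022} (Lemma 2) or reprove it in two lines. Writing $u_i^+ = T(u_i)$, monotonicity of $\mathcal{A}$ gives
\[
\langle P(u_1-u_1^+) - P(u_2-u_2^+),\, u_1^+-u_2^+\rangle \ge 0,
\]
which is firm nonexpansiveness in $\|\cdot\|_P$.

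\textbf{Main obstacle.} The statement phrases averagedness in the diagonal $V$-metric, but it genuinely holds in the $P$-metric. I would state the result with $\|\cdot\|_P$ and note that $\|\cdot\|_P$ and $\|\cdot\|_V$ are equivalent norms. That equivalence is all Theorem~\ref{theorem:perturbed_KM} needs later: summability of errors transfers between equivalent norms, and Combettes' result holds in any Hilbert space.

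For the fixed points, $u = T(u)$ iff $0 \in \mathcal{A}(u)$, i.e. $-A^\top w \in \partial f(z)$ and $Az \in \partial g^*(w)$. The second condition is equivalent to $w \in \partial g(Az)$ by Fenchel--Young (using $g$ proper, LSC, convex), so these are the KKT conditions. Summing gives $0 \in \partial f(z) + A^\top\partial g(Az) \subseteq \partial(f+g\circ A)(z)$, so $z\in X^*$.

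Nonemptiness of $\mathrm{Fix}(T)$ requires existence of a saddle point, i.e. a minimizer together with the subdifferential sum rule. I would assume a standard qualification condition such as $0\in\mathrm{ri}(\mathrm{dom}\, g - A\,\mathrm{dom} f)$, or note that it is automatic when $f,g$ are finite-valued Lipschitz functions, as elsewhere in the paper. Under that condition every primal minimizer admits a dual multiplier, which gives $\mathrm{Fix}(T)=\{(z^*,w^*)\}$ with $z^*\in X^*$.
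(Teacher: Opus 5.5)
\textbf{The gap is in your rearrangement step, and the statement itself is false.}

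Your two optimality conditions are correct. For the map in the statement, however, they rearrange to $0\in\mathcal{A}(u^+)+P(u^+-u)$ with
\[
P=\begin{bmatrix}\tfrac{1}{\tau}I & 0\\ A & \tfrac{1}{\sigma}I\end{bmatrix},
\]
not with the symmetric matrix you wrote. The primal step already uses $w^+$, so the first inclusion reads $0\in\partial f(z^+)+A^\top w^+ +\tfrac{1}{\tau}(z^+-z)$ with no cross term. Your substitution $Az=Az^++A(z-z^+)$ then puts $+A(z^+-z)$, not $-A(z^+-z)$, into the second inclusion. Your symmetric $P$ is the preconditioner of a different map, the extrapolated ($\theta=1$) scheme $z^+=\prox_{\tau f}(z-\tau A^\top w)$, $w^+=\prox_{\sigma g^*}\bigl(w+\sigma A(2z^+-z)\bigr)$. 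With the lower-triangular $P$, monotonicity of $\mathcal{A}$ still gives $\langle P(u_1-u_1^+)-P(u_2-u_2^+),\,u_1^+-u_2^+\rangle\ge0$. Without symmetry, though, this inequality does not translate into firm nonexpansiveness in any norm. Your fixed-point characterization does survive: this $P$ is invertible, so $u=T(u)$ iff $0\in\mathcal{A}(u)$.

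The step cannot be patched, because the averagedness claim fails for $T$ as written. This $T$ is Algorithm~1 of Chambolle--Pock with $\theta=0$ (the Arrow--Hurwicz method). Their Theorem~1 and Fercoq's Lemma~2, which are the only justification the paper gives, both require the extrapolation.

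Here is a counterexample. Take $n=m=1$, $A=a\neq0$, $f\equiv0$, $g=\iota_{\{0\}}$, so $g^*\equiv0$. Then $T(z,w)=\bigl((1-s)z-\tau a w,\ w+\sigma a z\bigr)$ with $s=\tau\sigma a^2\in(0,1)$. This is linear with determinant $1$ and trace $2-s\in(1,2)$, so its eigenvalues are $e^{\pm i\varphi}$ with $\varphi\neq0$. If $T=(1-\alpha)I+\alpha N$ with $N$ nonexpansive in some norm, every eigenvalue of $T$ lies in the disc $|\lambda-(1-\alpha)|\le\alpha$. That disc meets the unit circle only at $1$. Hence $T$ is averaged in no norm, and indeed its orbits circle the unique fixed point $(0,0)$ on invariant ellipses. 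If you take $g=|\cdot|$ instead, so that both functions are Lipschitz, $T$ agrees with this linear map on small invariant ellipses around the origin, and the same conclusion holds.

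Your proof is correct once $T$ is replaced by the $\theta=1$ map above. For that map your $P$ is exactly right, and your Schur-complement step shows it is positive definite iff $\tau\sigma\|A\|^2<1$. Your observation that the metric must then be $\|\cdot\|_P$ rather than the diagonal $\|\cdot\|_V$ is a further correction the statement needs.
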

By Lemma ~\ref{lem: PDHG-fixed-point}, $z_k \rightarrow z^\star$ and $z^* \in \mathnormal{X}^*$. We can express the HJ-PDHG update in terms of PDHG algorithm map $T$ \eqref{eq:PDHG_update}.
\begin{eqnarray}
    (\hat{z}_{k+1},\hat{w}_{k+1}) &=& T(\hat{z}_{k},\hat{w}_{k}) + \varepsilon_k
\end{eqnarray}
where 
\begin{eqnarray}
    \varepsilon_k &=& \begin{bmatrix}
\prox_{\tau f}(u_k - \tau A^\top \zeta_k) - \prox_{\tau f}(u_k)+\kappa_k \\
\zeta_k
\end{bmatrix}\\
u_k &=& \hat{z}_k - \tau A^\top \prox_{\sigma g^*}(\hat{w}_k+ \sigma A\hat{z}_k),
\end{eqnarray}
and 
\begin{eqnarray}
    \zeta_k &=& \prox^{\delta_k}_{\sigma g^*}(\hat{w}_k + \sigma A\hat{z}_k) - \prox_{\sigma g^*}(\hat{w}_k + \sigma A\hat{z}_k) \\
    \kappa_k &=& \prox^{\delta_k}_{\tau f}(u_k - \tau A^\top \zeta_k) - \prox_{\tau f}(u_k - \tau A^\top \zeta_k)
\end{eqnarray}
In the weighted norm $\|(z,w)\|_V^2 = \frac{1}{\tau} \|z\|^2 + \frac{1}{\sigma}\|w\|^2$ , we have the following bound
\begin{eqnarray}\label{eq: PDHG error bound}
    \| \varepsilon_k\|^2_V &\leq& \big(2\tau\|A\|_{\text{op}}^2 + \frac{1}{\sigma}\big)\|\zeta_k\|^2 + \frac{2}{\tau}\|\kappa_k\|^2
\end{eqnarray}
which follows from the fact that proximal mappings are nonexpansive and from $\|A^\top\|_{\text{op}} = \|A\|_{\text{op}}$.

From Theorem~\ref{thm:hj_prox_error_bound_mc}, we have the uniform bound

\begin{eqnarray}
    P\left(\left\|\zeta_k\right\| > \sqrt{\frac{8J_\star M_\star}{\alpha_k N_k}} +\sqrt{nt_k\delta_k}\right) \amp \leq \amp \alpha_k,\\
    P\left(\left\|\kappa_k\right\| > \sqrt{\frac{8J_\star M_\star}{\alpha_k N_k}} +\sqrt{nt_k\delta_k}\right) \amp \leq \amp \alpha_k.
\end{eqnarray}
Under Assumption~\ref{assump: fixed t}, we have
\begin{eqnarray}
    \sum_{k=0}^\infty \alpha_k < \infty &\text{and}& \sum_{k=0}^\infty \left(\sqrt{\frac{8J_\star M_\star}{\alpha_k N_k}}+\sqrt{nt\delta_k}\right)< \infty.
\end{eqnarray}
Thus, by Borel-Cantelli, 
\begin{eqnarray}
\sum_{k=0}^\infty \|\zeta_k\| &<& \infty  \amp\text{and}\amp   \sum_{k=0}^\infty \|\kappa_k\| \amp < \amp \infty  \amp \text{almost surely}.
\end{eqnarray}
By equation ~(\ref{eq: PDHG error bound}),
\begin{eqnarray}
    \sum_{k=0}^\infty \|\varepsilon_k\|_V &<& \infty \amp \text{almost surely}.
\end{eqnarray}
We have verified all conditions of Theorem~\ref{theorem:perturbed_KM} in the weighted metric space. By Lemma~\ref{lem: PDHG-fixed-point}, $T$ is firmly nonexpansive (hence averaged), and its fixed point set $S = \mathrm{Fix}(T) = \{(z^*,w^*) : z^* \in X^*\}$. We have shown $\sum_{k=0}^\infty \|\varepsilon_k\|_V < \infty$ almost surely. Since the step sizes $\tau, \sigma$ are fixed, we have a constant operator $T$ across all iterations, so $\bigcap_{k \geq 0} \mathrm{Fix}(T_k) = \mathrm{Fix}(T) = S$. The closedness condition follows from the demiclosedness of $T - \mathrm{Id}$ at zero for firmly nonexpansive operators in the weighted metric~\cite{Combettes2001}. Therefore, by Theorem~\ref{theorem:perturbed_KM}, $(\hat{z}_k,\hat{w}_k) \to (z^*,w^*) \in S$ almost surely, where $z^* \in X^* = \arg\min(f(x) + g(Ax))$ is a global minimizer.
\end{proof}

\section{Experiment Details}\label{AP: Experiments}
HJ-Prox and analytical counterparts run through all iterations. Every experiment simulates a ground truth structure with added noise and blur depending on problem setup. All parameters and step sizes are matched between HJ-Prox and the analytical counterparts to ensure a fair comparison. The HJ-Prox $\delta$ sequence follows a schedule

\begin{eqnarray}
    \delta_k & = & \mathcal{O}\left(\frac{1}{k^{2 + p}}\right), \quad p>0
\end{eqnarray}
where $k$ denotes the iteration number. The defined schedule decays strictly faster than $1/k^2$ satisfying conditions used in Theorem~\ref{theorem:perturbed_KM}. For our experiments, we set  $p=0.00001$. We note that the convergence behavior is robust to substantial deviations from the $\delta_k = 1/k^{2+p}$ guideline. In our scope of experiments, power-law schedules in the range $\delta_k \sim 1/k$ to $\delta_k \sim 1/k^{2}$ produce the best solution quality. A subtle but important point worth emphasizing is that the confidence sequence $\alpha_k$ is a proof-level device used in the Borel-Cantelli argument, \emph{not an algorithmic hyperparameter}. Because the theoretically sufficient sample size scales as $1/\alpha_k$ in Theorem~\ref{thm:hj_prox_error_bound_mc}, choosing $\alpha_k$ to decay too aggressively is doubly wasteful: it has no effect on the actual iterates, and it inflates the sample budget that the theory demands. Theoretically, the favored approach is therefore the slowest summable power-law schedules for both $\delta_k$ and $\alpha_k$ where we are fast enough to control the cumulative approximation error but no faster than necessary.
\subsection{PGD: LASSO Regression}
We solve the classic LASSO regression problem using PGD. The simulation setup involves a design matrix $X \in \mathbb{R}^{250 \times 500}$ with 250 observations and 500 predictors. The true coefficients $\beta$ are set such that $\beta^{400:410} = 1$ and all others are zero. The objective function is written as,
\begin{equation}
    \underset{\beta}{\arg\min}\ \frac{1}{2}\|X\beta - y\|_2^2 + \lambda\|\beta\|_1
\end{equation}
\[
X\in\mathbb{R}^{250\times 500},\quad
\beta\in\mathbb{R}^{500},\quad
y\in\mathbb{R}^{250}.
\]
The analytical PGD baseline performs a gradient step on the least-squares term followed by the exact soft thresholding.
\subsection{DRS: Multitask Learning}
Multitask learning learns predictive models for multiple related response variables by sharing information across tasks to enhance performance. We solve this problem using Douglas-Rachford splitting, employing HJ-Prox in place of analytical updates.
We group the quadratic loss with the nuclear norm regularizer to form one function and the row and column group LASSO terms to form the other. Both resulting functions are non-smooth, requiring HJ-Prox for their proximal mappings. The simulation setup involves $n=50$ observations, $p=30$ predictors, and $q=9$ tasks. The objective function is written as, 
\begin{equation}
\underset{\beta}{\arg\min}\;
\frac{1}{2}\,\| X B-Y\|_{F}^{2}
\;+\;\lambda_{1}\,\| B\|_{*}
\;+\;\lambda_{2}\sum_{i}\| b_{i,\cdot}\|_{2}
\;+\;\lambda_{3}\sum_{j}\|b_{\cdot,j}\|_{2}
\end{equation}
\[
X\in\mathbb{R}^{50\times 30},\quad
B\in\mathbb{R}^{30\times 9},\quad
Y\in\mathbb{R}^{50\times 9}.
\]
The analytical counterpart for Douglas Rachford Splitting utilizes singular value soft thresholding for the nuclear norm and group soft thresholding for the row and column penalties. These regularizers are integrated with fast iterative soft thresholding (FISTA) to handle the data fidelity term with nuclear norm regularization and Dykstra's algorithm to handle the sum of row and column group LASSO penalties.

\subsection{DRS: Trend Filtering}
Trend filtering is commonly used in signal processing to promote piecewise smoothness in the solution \cite{tibshirani2014adaptive}. We apply it to recover a Doppler signal with length $n=256$ using a third-order differencing matrix $D$. We solve this problem with DRS, comparing two implementation strategies: an exact method using product-space reformulation motivated by \cite{TibshiraniTaylor2011}, and an approximate method using HJ-Prox. The objective function is written as, 

\begin{equation}
    \underset{\beta}{\arg\min}\ \frac{1}{2}\|\beta - y\|^2 + \lambda\|D\beta\|_1
\end{equation}
\[
\beta \in\mathbb{R}^{256},\quad
y\in\mathbb{R}^{256},\quad
D \in\mathbb{R}^{253\times 256}.
\]

The proximal operator of $\lambda\|D\beta\|_1$ has no closed-form solution for general linear operators $D$. The analytical counterpart addresses this by reformulating the problem in a product space with auxiliary variable $w = D\beta$, yielding separable proximal operators (weighted averaging and soft thresholding) at the cost of inverting terms including $D^\top D$ at each iteration. In contrast, our HJ-Prox variant directly approximates the intractable proximal operator through Monte Carlo sampling. 

\subsection{DYS: Sparse Group LASSO}
The sparse group LASSO promotes group-level sparsity while allowing individual variable selection within groups, which is useful when certain groups are relevant but contain unnecessary variables.
We solve this problem using DYS, employing HJ-Prox for the proximal operators of the non-smooth regularizers. The simulation setup involves $n=300$ observations with $G=6$ groups, each having 10 predictors. The objective function is written as, 
\begin{equation}
\underset{\beta}{\arg\min}\;
\frac{1}{2}\,\| X \beta - y\|_{2}^{2}
\;+\;\lambda_1\sum_{g=1}^{6}\| \beta_{g}\|_{2}
\;+\;\lambda_2\| \beta\|_{1}
\end{equation}
\[
X \in \mathbb{R}^{300 \times 60},\quad
\beta \in\mathbb{R}^{60},\quad
y\in\mathbb{R}^{300}.
\]

The analytical counterpart for DYS solves the sparse group LASSO by using soft thresholding for the $\ell_1$ penalty and group soft thresholding for the group $\ell_2$ penalty.
\subsection{PDHG: Total Variation}
Lastly, we implement PDHG method to solve the isotropic total variation regularized least‐squares problem. We apply the proximal operator for the data fidelity term via its closed‐form update and employ our HJ‐based proximal operator for the total variation penalty. For this experiment, we recover a smoothed 64 x 64 black and white image from a noisy and blurred image $y$. The objective function is written as, 

\begin{equation}
\underset{\beta}{\arg\min}\;
\frac{1}{2}\,\| X \beta - y\|_{F}^{2}
+\lambda \text{TV}(\beta)
\end{equation}
\[
\beta \in\mathbb{R}^{64 \times 64},\quad
y\in\mathbb{R}^{64 \times 64}.
\]

The (slightly smoothed) isotropic TV we use to evaluate the objective is
\begin{eqnarray}
    \mathrm{TV}(\beta)
 & = &
\sum_{i=1}^{64}\sum_{j=1}^{64}
\sqrt{\,(\nabla_x \beta)_{i,j}^2 + (\nabla_y \beta)_{i,j}^2}.
\end{eqnarray}

The analytical counterpart for PDHG  algorithm updates dual variables using closed-form scaling for data fidelity and clamping (for $\ell_2$ projection of TV dual), and primal variables using Fast Fourier transform convolution and divergence via finite differences.

\subsection{DYS: Non-negative LASSO}
The non-negative LASSO extends standard LASSO regression by enforcing non-negativity constraints on the coefficients, which is appropriate when the underlying relationship is known to be monotonic or when negative coefficients lack physical interpretation. We solve this problem using DYS, employing HJ-Prox for the proximal operators of the non-smooth regularizer and constraint. The simulation setup involves $250$ observations and $p=500$, with 50 coefficients truly nonzero and positive. We use a fixed $\delta$ here to truly showcase the errors. The objective function is written as,

\begin{equation}
\underset{\beta}{\arg\min}
\frac{1}{2}\| X \beta - y\|_{2}^{2}
+\lambda\| \beta\|_1
+I_{\mathbb{R}_+^{p}}(\beta)
\end{equation}
\[
X \in \mathbb{R}^{250 \times 500},\quad
\beta \in\mathbb{R}^{500},\quad
y\in\mathbb{R}^{250}.
\]
where $I_{\mathbb{R}_+^{p}}(\beta)$ is the indicator function for the non-negative orthant, equal to 0 when all elements of $\beta$ non-negative and $+\infty$ otherwise. This formulation incorporates the non-negativity constraint directly into the objective function.

The analytical counterpart for DYS solves the non-negative LASSO using three separable proximal operators: soft thresholding for the $\ell_1$ penalty, projection onto the non-negative orthant (element-wise maximum with zero), and the resolvent of the gradient for the least-squares term.

\subsection{DYS: Overlapping Group LASSO}
We consider the following overlapping group LASSO problem,
\begin{equation}
\underset{\beta}{\arg\min}\;
\frac{1}{2}\,\| X \beta - y\|_{2}^{2}
\;+\;\lambda_1 \sum_{g=1}^{298} w_g \| \beta_g \|_{2}
\;+\;\lambda_2 \| \beta \|_{1},
\end{equation}
\[
X \in \mathbb{R}^{286 \times 13237},\quad
\beta \in \mathbb{R}^{13237},\quad
y \in \mathbb{R}^{286},
\]
where $\{\beta_g\}_{g=1}^G$ denote possibly overlapping groups of coefficients, and $w_g > 0$ are group-specific weights. We evaluate overlapping group LASSO on the GSE2034 breast cancer gene expression dataset, a widely used benchmark for high-dimensional, low-sample-size prediction in genomics. The dataset consists of primary tumor samples profiled using Affymetrix microarrays, with a binary clinical outcome indicating disease relapse. Gene expression values are mapped from probes to gene symbols using platform annotations, averaged across duplicate probes, log-transformed and standardized. Overlapping groups are constructed from curated KEGG pathways. Analytical approach is solved using FoGLASSO as recommended, \cite{YuanLiuYe2011}. DYS-HJ approach uses a backtracking scheme implemented by \cite{PedregosaGidel2018ICML} with the HJ-Prox called on the overlapping group LASSO.

\end{document}